\documentclass[oneside,english,smallextended]{amsart}
\usepackage[T1]{fontenc}
\usepackage[latin9]{inputenc}
\usepackage{mathrsfs}
\usepackage{amsthm}
\usepackage{amsbsy}
\usepackage{amssymb}
\usepackage{setspace}
\usepackage{esint}
\usepackage[all]{xy}
\onehalfspacing

\makeatletter
\numberwithin{equation}{section}
\numberwithin{figure}{section}
\theoremstyle{plain}
\newtheorem{thm}{\protect\theoremname}
  \theoremstyle{plain}
  \newtheorem{cor}[thm]{\protect\corollaryname}
  \theoremstyle{plain}
  \newtheorem{lem}[thm]{\protect\lemmaname}
  \theoremstyle{plain}
  \newtheorem{prop}[thm]{\protect\propositionname}
  \theoremstyle{remark}
  \newtheorem{claim}[thm]{\protect\claimname}


\makeatother

\usepackage{babel}
  \providecommand{\claimname}{Claim}
  \providecommand{\corollaryname}{Corollary}
  \providecommand{\lemmaname}{Lemma}
  \providecommand{\propositionname}{Proposition}
\providecommand{\theoremname}{Theorem}

\begin{document}

\title{Structure of the Unramified $L$-packet }

\author{Manish Mishra}

\email{mmishra@math.huji.ac.il}

\address{Einstein Institute of Mathematics, The Hebrew University of Jerusalem,
Jerusalem, 91904, Israel}
\begin{abstract}
Let $\boldsymbol{G}$ be an unramified connected reductive group defined
over a non-archemedian local field $k$ and let $\boldsymbol{T}$
be a maximal torus in $\boldsymbol{G.}$ Let $\lambda$ be an unramified
character of $\boldsymbol{T.}$ Then the conjugacy classes of hyperspecial
subgroups of $\boldsymbol{G}(k)$ is a principal homogenous space
for a certain finite abelian group $\hat{\Omega}$. Also, the $L$-packet
$\Pi(\varphi_{\lambda})$ associated to $\lambda$ is parametrized
by an abelian group $\hat{R}$. We show that $\hat{R}$ is naturally
a homogenous space for $\hat{\Omega}$. Further, let $\pi_{\rho}\in\Pi(\varphi_{\lambda})$,
where $\rho\in\hat{R}$ and let $[K]$ denote the conjugacy class
of hyperspecial subgroup $K.$ Then we show that $\pi_{\rho}^{K}\neq0$
if and only if $\pi_{\omega\cdot\rho}^{K_{\omega}}\neq0$ where $\omega\in\hat{\Omega}$
and $K_{\omega}$ is any hyperspecial subgroup in the conjugacy class
$\omega\cdot[K]$. 
\end{abstract}
\maketitle
\footnote{2010 Mathematics Subject Classification 22E50 (primary), 20G25, 20F55,
20E42, 11R39 (secondary).%
}

\section*{Introduction\label{intro}}

Let $\boldsymbol{G}$ be a connected reductive group defined over
a local field $k$. The local Langlands conjectures predict that the
irreducible admissible representations of $\boldsymbol{G}(k)$ can
be partitioned into finite sets, known as $L$-packets, in a certain
natural way. Each packet is expected to be associated to what is known
as a Langlands parameter for $\boldsymbol{G}$. 

Now assume $\boldsymbol{G}$ is \textit{unramified}, i.e., it admits
\textit{hyperspecial subgroups}. A representation of $\boldsymbol{G}$
is called unramified if it has a non-zero vector fixed under some
hyperspecial subgroup of $\boldsymbol{G}(k)$. Unramified representations
are of central importance in the Langlands program, as almost all
local components of global representations are unramified. They were
the first representations to be grouped into packets and associated
with Langlands parameters. In this paper, we answer the question of
how $L$-indistinguishability is related to the different choices
of hyperspecial subgroups. 

Let $\boldsymbol{T}$ be a maximal torus in $\boldsymbol{G}$ contained
in a Borel subgroup $\boldsymbol{B}$, both defined over $k$. A character
of $\boldsymbol{T}(k)$ is called unramified if it is trivial on the
maximal compact subgroup $\boldsymbol{T}(k)_{\circ}$ of $\boldsymbol{T}(k)$.
An unramified character $\lambda$ of $\boldsymbol{T}(k)$ corresponds
to a Langlands parameter $\varphi_{\lambda}$ of $\boldsymbol{G}$
via the local Langlands correspondence for tori. The local Langlands
conjectures stipulate what the $L$-packet $\boldsymbol{\Pi}(\varphi_{\lambda})$
associated to such a Langlands parameter $\varphi_{\lambda}$ has
to be. Namely, (see \cite[10.4]{Borel1979}) $\boldsymbol{\Pi}(\varphi_{\lambda})$
should consist precisely of those subquotients of the induced representation
Ind$_{\boldsymbol{B}(k)}^{\boldsymbol{G}(k)}\lambda$, that are unramifi{}ed.
For a hyperspecial subgroup $K$ of $\boldsymbol{G}(k)$, denote by
$\tau_{K,\lambda}$ the unique subquotient of Ind$_{\boldsymbol{B}(k)}^{\boldsymbol{G}(k)}\lambda$
having a $K$-fixed vector. It is well known that the choice of $K$
determines a bijection between the group $\hat{R}_{\varphi_{\lambda}}$
of characters of a certain finite abelian group $R_{\varphi_{\lambda}}$,
and the elements of $\boldsymbol{\Pi}(\varphi_{\lambda})$, with the
trivial character of $R_{\varphi_{\lambda}}$ corresponding to $\tau_{K,\lambda}$.
However, this does not tell us which character of $R_{\varphi_{\lambda}}$
corresponds to which representation in $\boldsymbol{\Pi}(\varphi_{\lambda})$.
This hitherto unexplored question on the fi{}ner internal structure
of the $L$-packet is answered by Theorem \ref{main theorem} (see
Section \ref{main sec}) which, given a character $\rho$ of $R_{\varphi_{\lambda}}$,
specifi{}es the various hyperspecial subgroups $K^{\prime}$ for which
$\tau_{K^{\prime},\lambda}$ corresponds to $\rho$. 

Further, it has long been known that the various $\tau_{K,\lambda}$
as above are all the unramified representations of $\boldsymbol{G}$.
This gets sharpened into a parametrization of the unramified representations
of $\boldsymbol{G}$ when combined with the Corollary \ref{V isom}
to Theorem \ref{main theorem} (see Section \ref{main sec}), which
spells out what the relation between pairs $(K,\lambda)$ and $(K',\lambda^{\prime})$
as above has to be, for $\tau_{K,\lambda}$ to be isomorphic to $\tau_{K^{\prime},\lambda^{\prime}}$.

To put this result in perspective, let us briefl{}y review some history
of previous work related to this problem. When $\boldsymbol{G}$ is
(in addition to being unramified) simply connected and almost simple
and when $\lambda$ is unitary and unramified, D. Keys showed that
all irreducible constituents of the principal series representation
Ind$_{\boldsymbol{B}(k)}^{\boldsymbol{G}(k)}\lambda$ are unramified
\cite[sec 4]{Keys1982}. Without these extra conditions on $\boldsymbol{G}$,
D. Keys and F. Shahidi specified a non-zero Whittaker functional afforded
by $\tau_{K,\lambda}$ \cite[Theorem 4.1]{MR944102}.

A key idea required in the proof of Theorem \ref{main theorem} is
that the Knapp-Stein $R$-group can be realized as the stabilizer,
in a certain subgroup of the affine Weyl group, of a point $x_{\lambda}$
in the Bruhat-Tits building of $G.$ This idea develops over the work
of M. Reeder \cite{MR2674853}. Note that we are allowing $\lambda$
to be non-unitary, so the principal series representation Ind$_{\boldsymbol{B}(k)}^{\boldsymbol{G}(k)}\lambda$
does not, in general decompose as a direct sum of irreducible representations.
The construction of the $L$-packet as described in \cite{Shahidi2011}
is required.

\section{Notations}

Let $\boldsymbol{\boldsymbol{G}}$ be an unramified, connected reductive
linear algebraic group defined over a $p$-adic local field $k$.
We denote the $k$-point of $\boldsymbol{G}$ by $G$ and likewise
for all its subgroups. Let $\boldsymbol{A}$ denote a maximal split
$k$-torus in $\boldsymbol{G}$ and let $\boldsymbol{T}=Z_{\boldsymbol{G}}\boldsymbol{A}$,
the centralizer of $\boldsymbol{A}$ in $\boldsymbol{T}$. Then $\boldsymbol{T}$
is a maximal torus in $\boldsymbol{G}$, since $\boldsymbol{G}$ is
quasi-split. Let $\boldsymbol{B}$ be a Borel subgroup containing
$\boldsymbol{T}.$ Let $\Psi=\left(X,\Phi,\Delta,\check{X},\check{\Phi},\check{\Delta}\right)$
be the based root datum of $(\boldsymbol{G,}\boldsymbol{B,}\boldsymbol{T})$.
Let $W_{k}$ denote the Weil group of $k$. Let $I=I_{k}$ be the
inertia subgroup of $W_{k}$ and let $\sigma=\sigma_{k}$ be the Frobenius
element in $W_{k}/I$. The Frobenius element $\sigma$ induces an
automorphism of $\Psi$ which we again denote by $\sigma$. If $K$
is a compact subgroup of $G$, we denote the set of conjugates of
$K$ in $G$ by $[K].$ If $\pi$ is a representation of $G$, by
the expression $\pi^{[K]}\neq0$, we mean that there is a non-zero
$K$-fixed vector in the space realizing $\pi$ (consequently, for
any conjugate $K^{\prime}\in[K],$ there is a non-zero $K^{\prime}$-fixed
vector). 

A character of $T$ is called unramified, if it is trivial on the
maximal compact subgroup $T_{\circ}$ of $T$. Let $\lambda$ be such
a character of $T.$ To this character, one can associate a Langlands
parameter $\varphi=\varphi_{\lambda}.$ Denote by $\boldsymbol{\Pi}(\varphi_{\lambda})$
the $L$-packet assicated to $\varphi_{\lambda}$. Let $\hat{\boldsymbol{G}}$
be the complex dual of $\boldsymbol{G}$. Let $S_{\varphi}=Z_{\hat{\boldsymbol{G}}}\mbox{Im}(\varphi),$
the centralizer in $\hat{\boldsymbol{G}}$ of the image of $\varphi$
and let $S_{\varphi}^{\circ}$ be its connected component containing
the identity. Let $\hat{Z}=Z(\hat{\boldsymbol{G}})$, the center of
the dual group. Let $R_{\varphi}=R_{\varphi}(\boldsymbol{G)}:=S_{\varphi}/S_{\varphi}^{\circ}\hat{Z}^{\sigma}$
be the $R$$ $ group defined by Langlands. This group turns out to
be abelian. The elments of the $L$-packet $\boldsymbol{\Pi}(\varphi)$
are parametrized by $\hat{R}_{\varphi}$, the group of characters
of $R_{\varphi}$. This parametrization is not canonical, however.
It depends on the choice of a nilpotent orbit. We fix a parametrization
and denote the representation corresponding to a character $\rho\in\hat{R}_{\varphi}$
under this parametrization by $\pi_{\rho}$.

\section{\label{main statement}Statement of the theorems\label{main sec}}

Let $\boldsymbol{G}_{ad}$ be the adjoint group of $\boldsymbol{G}$
and let $\boldsymbol{T}_{ad}$ be the maximal torus of $\boldsymbol{G}_{ad}$
which is the image of $\boldsymbol{T}$ in $\boldsymbol{G}_{ad}$.
The set of conjugacy classes of hyperspecial subgroups of $G$ is
a principal homogenous space for the finite abelian group $\hat{\Omega}:=\mbox{coker}(\check{X}^{\sigma}\rightarrow\check{X}_{ad}^{\sigma})$,
where $\check{X}_{ad}:=\check{X}(\boldsymbol{T}_{ad}).$ The action
is defined as follows: the natural map $\check{X}(\boldsymbol{T}_{ad})^{\sigma}\otimes\mathbb{R}\rightarrow\check{X}(\boldsymbol{A})\otimes\mathbb{R}$
gives an action of $\check{X}_{ad}^{\sigma}$ on the apartment $\mathcal{A}(\boldsymbol{A}).$
Let $\mathcal{H}=\{x\in\mathcal{A}(\boldsymbol{A}):x\mbox{ is hyperspecial}\}.$
Sine $\boldsymbol{G}$ is unramified, we may and do assume that the
hyperspecial subgroups of $G$ are the stabilizers $G_{x}$ of hyperspecial
points $x.$ For $t\in\check{X}_{ad}^{\sigma}$ and $x\in\mathcal{H},$
define $t\cdot[G_{x}]:=[G_{t\cdot x}].$ Then this action is well
defined and it makes the conjugacy classes of hyperspecial subgroups,
a principal homogenous space for $\hat{\Omega}.$
\begin{thm}
\label{main theorem}There exits a canonical surjection $\zeta\colon$$\hat{\Omega}\twoheadrightarrow\hat{R}_{\varphi}$
and 
\[
\pi_{\omega\cdot\rho}^{\omega\cdot[K]}\neq0\iff\pi_{\rho}^{[K]}\neq0,
\]
 for all $\omega\in\hat{\Omega},$ $\rho\in\hat{R}_{\varphi}$, where
$K$ is a hyperspecial subgroup of $G.$ Here $\omega\cdot\rho:=\rho+\zeta(\omega).$ 
\end{thm}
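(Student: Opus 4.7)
My strategy is to lift the geometric realization of the Knapp--Stein $R$-group due to Reeder, combined with Shahidi's construction of $\boldsymbol{\Pi}(\varphi_{\lambda})$ in the possibly non-unitary setting, to also keep track of how things transform under the action of $\hat{\Omega}$ on hyperspecial points. The plan is to work inside the Iwahori--Weyl group $\tilde{W}$ acting on the apartment $\mathcal{A}(\boldsymbol{A})$, so that both $\hat{\Omega}$ (realized as translations) and $R_{\varphi}$ (realized as a point stabilizer) live in one common ambient object.

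First, I would use Shahidi's work to identify $R_{\varphi}$ with the Knapp--Stein $R$-group $R(\lambda)$ of $\mbox{Ind}_{B}^{G}\lambda$, and then, following Reeder, realize $R(\lambda)$ as the stabilizer in a suitable finite subgroup $W_{\lambda}\subset\tilde{W}$ of a well-chosen point $x_{\lambda}\in\mathcal{A}(\boldsymbol{A})$ determined by $\lambda$. The group $\hat{\Omega}$ acts on $\mathcal{A}(\boldsymbol{A})$ by translations and normalises $W_{\lambda}$; transporting this action to the stabiliser of $x_{\lambda}$ produces a natural map $\zeta\colon\hat{\Omega}\to\hat{R}_{\varphi}$. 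Surjectivity should then be verified on rank-one generators of $R_{\varphi}$, where everything reduces to an explicit $\mathrm{SL}_{2}/\mathrm{PGL}_{2}$ calculation.

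Next I would prove the main equivalence. Fix $K=G_{x}$ and let $\rho_{K}\in\hat{R}_{\varphi}$ be the character attached to the distinguished constituent $\tau_{K,\lambda}$ under the fixed parametrization, so that $\pi_{\rho}^{[K]}\neq0$ iff $\rho=\rho_{K}$. For $\omega\in\hat{\Omega}$ and $K_{\omega}=G_{\omega\cdot x}$, the point $\omega\cdot x$ is again hyperspecial in the same apartment, and the heart of the argument is to show the equivariance identity $\rho_{K_{\omega}}=\omega\cdot\rho_{K}$ in $\hat{R}_{\varphi}$. Granted this, the claimed biconditional is immediate: $\pi_{\omega\cdot\rho}^{[K_{\omega}]}\neq0$ iff $\omega\cdot\rho=\rho_{K_{\omega}}=\omega\cdot\rho_{K}$ iff $\rho=\rho_{K}$ iff $\pi_{\rho}^{[K]}\neq0$.

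The main obstacle, on which essentially the entire argument rests, is this equivariance statement $\rho_{K_{\omega}}=\omega\cdot\rho_{K}$. The Shahidi parametrization is pinned down by auxiliary data (a choice of nilpotent orbit or Whittaker datum) that itself depends on the hyperspecial $K$, so the content is that translating $K$ by $\omega$ shifts this auxiliary data in precisely the manner recorded by $\zeta(\omega)$. I expect this to come out of a careful comparison of the intertwining operators, or equivalently of the Whittaker functionals, attached to $K$ and $K_{\omega}$: using the action of $\tilde{W}$ on the apartment to conjugate one to the other, one should be able to read off the resulting scalar as the character $\zeta(\omega)$ of $R_{\varphi}$, thereby completing the proof.
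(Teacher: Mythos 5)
Your overall architecture matches the paper's closely: use the local Langlands correspondence for tori to attach a point $x_{\lambda}$ in the apartment, realize $R_{\varphi}$ à la Reeder as the stabilizer $\underline{\Omega}_{x}$ inside the alcove-preserving subgroup $\underline{\Omega}\subset\tilde{W}$, build $\zeta$ from the duality between $\hat{\Omega}$ and $\underline{\Omega}^{tor}$, and then reduce the biconditional to the equivariance identity $\rho_{K_{\omega}}=\omega\cdot\rho_{K}$ via the fact (Lemma~\ref{lem pi K-sph }) that $\pi_{\rho}^{[K]}\neq0$ iff $\rho=\rho_{\lambda,[K]}$. Two remarks on where your sketch falls short of a proof.

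First, the step you flag as the ``main obstacle'' really is all the content, and your proposed route around it does not go through as stated. You suggest ``using the action of $\tilde{W}$ on the apartment to conjugate $K$ to $K_{\omega}$'' and reading off the scalar. But a translation by $\omega\in\hat{\Omega}$ moves between \emph{distinct} $G$-conjugacy classes of hyperspecial points, so there is no $g\in G$ with ${}^{g}K=K_{\omega}$; one can only conjugate by elements of $G_{\mathrm{ad}}(k)$, and the normalized intertwining operators are not equivariant under that larger group in any obvious way. The paper instead grinds this out directly: Keys' Lemma~\ref{keys lemma} computes $\mathscr{A}(s_{\alpha},\lambda)f_{K_{\omega}}$ in terms of whether $U_{(\alpha)}\cap K_{\omega}$ equals $U_{\alpha}$ or $U_{\alpha-1}$, and then Propositions~\ref{intertwining coeff}, \ref{intertwining coeff-1} assemble the cocycle $c_{\lambda}(\omega,r)$ into $\rho_{\omega}(r)=e^{-2\pi i(\omega,r)}$ by an explicit case-by-case calculation over the types $A_{n},B_{n},C_{n},D_{n},E_{6},E_{7}$ and the twisted types. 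I don't see how to replace that computation by a soft conjugation argument, so as written you have identified the crux but not closed it.

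Second, you should make the reduction chain explicit. The paper proves the unitary case first for $\boldsymbol{G}$ semisimple, almost simple, simply connected, then passes to general reductive $\boldsymbol{G}$ through the isogeny $\boldsymbol{Z}^{\circ}\times(\boldsymbol{G}^{der})^{sc}\twoheadrightarrow\boldsymbol{G}$ (Lemmas~\ref{lem: principal surjective.}, \ref{lemma reduct omega } and diagram~\ref{R-Omega commute diag}), and finally handles non-unitary $\lambda$ by factoring $\varphi=\varphi_{\circ}\varphi_{+}$, passing to the Levi $\boldsymbol{M}$ with $R_{\varphi_{\circ}}(\boldsymbol{M})\cong R_{\varphi}(\boldsymbol{G})$, and showing the Langlands quotient $J(\nu,\tau_{\rho})$ has a $K_{G}$-fixed vector iff $\tau_{\rho}$ has a $K_{M}$-fixed vector (Lemma~\ref{lemm J}). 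You allude to Shahidi's construction, but without spelling out that a lemma of this shape is needed, the non-unitary case is not actually addressed. Also, a small economy: surjectivity of $\zeta$ needs no rank-one verification; it is automatic from Pontryagin duality once $R\cong\underline{\Omega}_{x}^{tor}\subset\underline{\Omega}^{tor}$ is in hand.
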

For a hyperspecial subgroup $K$, a representation of $G$ is called
\textit{$K$-spherical} if it is smooth, irreducible, admissible and
contains a non-zero vector invariant under $K$. Write $\boldsymbol{B}=\boldsymbol{TU}$,
where $\boldsymbol{U}$ is the unipotent radical of $\boldsymbol{B}$.
The $K$-spherical representations have the following explicit description:

Define the complex valued function $\varLambda_{K,\lambda}\colon G\rightarrow\mathbb{C}$
\[
\varLambda_{K,\lambda}(tuk)=\lambda(t)\delta^{\frac{1}{2}}(t),
\]
 for $t\in T$ , $u\in U$ and $k\in K$. Here $\delta$ is the modulus
function. Put 
\[
\Gamma_{K,\lambda}(g)=\int_{K}\varLambda_{K,\lambda}(kg)dk,
\]
 for $g\in G$. Denote by $V_{[K],\lambda}$ the space of functions
$f$ on $G$ of the form 
\[
f(g)=\sum_{i=1}^{n}c_{i}\Gamma_{K,\lambda}(gg_{i})
\]
 for $c_{1},\ldots,c_{n}$ in $\mathbb{C}$ and $g_{1},\ldots,g_{n}$
in $G$. We let $G$ operate on $V_{[K],\lambda}$ by right translations,
namely 
\[
(\pi_{[K],\lambda}(g)f)(g_{1})=f(g_{1}g)
\]
 for $f$ in $V_{[K],\lambda}$ and $g$,$g_{1}$ in $G$. Then it
is well known that the representation $(\pi_{[K],\lambda},V_{[K],\lambda})$
is $K$-spherical and any $K$-spherical representation is isomorphic
to $(\pi_{[K],\lambda},V_{[K],\lambda})$ for some unramified character
$\lambda$ (see \cite[sec. 4.4]{MR546593}). 

For different choices of $K,$ the following corollary of theorem
\ref{main theorem} describes the condition for the representations
$(\pi_{[K],\lambda},V_{[K],\lambda})$ to be isomorphic. 
\begin{cor}
\label{V isom}$(\pi_{\omega_{1}\cdot[K],\lambda},V_{\omega_{1}\cdot[K],\lambda})\cong(\pi_{\omega_{2}\cdot[K],\lambda},V_{\omega_{2}\cdot[K],\lambda})\mbox{ if and only if }\zeta(\omega_{1})=\zeta(w_{2}).$
\end{cor}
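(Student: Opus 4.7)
The plan is to read the corollary as a direct translation of Theorem \ref{main theorem} once the objects $(\pi_{[K],\lambda},V_{[K],\lambda})$ are identified with members of the $L$-packet $\boldsymbol{\Pi}(\varphi_\lambda)$. The key observation is that, by construction, $(\pi_{[K],\lambda},V_{[K],\lambda})$ is a $K$-spherical subquotient of $\mathrm{Ind}_{B}^{G}\lambda$, and hence coincides with the distinguished representation $\tau_{K,\lambda}$ singled out in the introduction.

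First I would set up the parametrization. Fix a hyperspecial subgroup $K$ and use it to normalize the bijection $\hat{R}_{\varphi}\leftrightarrow \boldsymbol{\Pi}(\varphi_\lambda)$ so that the trivial character $\mathbf{1}\in\hat{R}_{\varphi}$ corresponds to the unique $K$-spherical member of the packet. In particular $(\pi_{[K],\lambda},V_{[K],\lambda})\cong \pi_{\mathbf{1}}$, and more generally, for any $\omega\in\hat{\Omega}$, the representation $(\pi_{\omega\cdot[K],\lambda},V_{\omega\cdot[K],\lambda})$ is the unique $K_{\omega}$-spherical member of $\boldsymbol{\Pi}(\varphi_{\lambda})$, where $K_{\omega}$ is any representative of $\omega\cdot[K]$.

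Next I would apply Theorem \ref{main theorem} to the trivial character. Since $\pi_{\mathbf{1}}^{[K]}\neq 0$ by construction, the theorem gives $\pi_{\omega\cdot\mathbf{1}}^{\omega\cdot[K]}\neq 0$, i.e.\ $\pi_{\zeta(\omega)}^{\omega\cdot[K]}\neq 0$. By uniqueness of the spherical member of an $L$-packet relative to a fixed hyperspecial subgroup, this forces
\[
(\pi_{\omega\cdot[K],\lambda},V_{\omega\cdot[K],\lambda})\cong \pi_{\zeta(\omega)}.
\]
Thus the question of whether $(\pi_{\omega_{1}\cdot[K],\lambda},V_{\omega_{1}\cdot[K],\lambda})$ and $(\pi_{\omega_{2}\cdot[K],\lambda},V_{\omega_{2}\cdot[K],\lambda})$ are isomorphic reduces to whether $\pi_{\zeta(\omega_{1})}\cong \pi_{\zeta(\omega_{2})}$.

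Finally, since the parametrization $\hat{R}_{\varphi}\to\boldsymbol{\Pi}(\varphi_{\lambda})$ is a bijection, $\pi_{\zeta(\omega_{1})}\cong\pi_{\zeta(\omega_{2})}$ is equivalent to $\zeta(\omega_{1})=\zeta(\omega_{2})$, which is the desired conclusion. There is no real obstacle once Theorem \ref{main theorem} is in hand; the only point requiring a brief justification is the identification of $(\pi_{[K],\lambda},V_{[K],\lambda})$ with $\tau_{K,\lambda}=\pi_{\mathbf{1}}$, which is standard and follows from the explicit construction of $V_{[K],\lambda}$ via the $K$-invariant function $\Gamma_{K,\lambda}$.
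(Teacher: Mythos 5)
Your proof is correct and is essentially the argument the paper leaves implicit (the paper states Corollary \ref{V isom} without proof, presenting it as an immediate consequence of Theorem \ref{main theorem}). Identifying $(\pi_{\omega\cdot[K],\lambda},V_{\omega\cdot[K],\lambda})$ with the unique $K_\omega$-spherical member of the packet, applying the theorem to the base character, and then using injectivity of the $\hat R_\varphi$-parametrization is exactly the intended route; as a small remark, the normalization step (choosing $K$ so that $\pi_{\mathbf 1}^{[K]}\neq 0$) is not even needed, since if $\pi_{\rho_0}^{[K]}\neq 0$ for some $\rho_0$, the theorem still gives $(\pi_{\omega\cdot[K],\lambda},V_{\omega\cdot[K],\lambda})\cong\pi_{\rho_0+\zeta(\omega)}$ and the isomorphism criterion reduces to $\zeta(\omega_1)=\zeta(\omega_2)$ all the same.
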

Let $\tilde{\rho}$ denote any element of $\zeta^{-1}(\rho)$ for
$\rho\in\hat{R}_{\varphi}$. Theorem \ref{main theorem} and corollary
\ref{V isom} immediately imply the following explicit description
of the elements of the $L$-packet $\boldsymbol{\Pi}(\varphi_{\lambda})$.
\begin{cor}
The elements of the L-packet $\boldsymbol{\Pi}(\varphi_{\lambda})$
are $\{(\pi_{\tilde{\rho}\cdot[K],\lambda},V_{\tilde{\rho}\cdot[K],\lambda}):\rho\in\hat{R}_{\varphi}\}.$
\end{cor}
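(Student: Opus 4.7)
The plan is to deduce this immediately from Theorem \ref{main theorem} and Corollary \ref{V isom}, together with the explicit description of $K$-spherical representations recalled from \cite[sec. 4.4]{MR546593}. By the normalization of the parametrization $\rho \mapsto \pi_\rho$ of $\boldsymbol{\Pi}(\varphi_\lambda)$ associated to the chosen hyperspecial subgroup $K$, the trivial character $\mathbf{1} \in \hat{R}_\varphi$ corresponds to the unique unramified subquotient of $\mathrm{Ind}_{B(k)}^{G(k)}\lambda$ possessing a non-zero $K$-fixed vector, namely $\tau_{K,\lambda} = (\pi_{[K],\lambda}, V_{[K],\lambda})$. In particular $\pi_{\mathbf{1}}^{[K]} \neq 0$, which is the input I need for Theorem \ref{main theorem}.

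Next, I would apply Theorem \ref{main theorem} with $\rho = \mathbf{1}$ and an arbitrary $\omega \in \hat{\Omega}$: this gives $\pi_{\zeta(\omega)}^{\omega\cdot[K]} \neq 0$, so for any representative $K_\omega$ of $\omega\cdot[K]$ the representation $\pi_{\zeta(\omega)}$ is $K_\omega$-spherical. The uniqueness part of the classification of spherical representations then forces
\[
\pi_{\zeta(\omega)} \;\cong\; (\pi_{\omega\cdot[K],\lambda},\, V_{\omega\cdot[K],\lambda}).
\]
Since $\zeta \colon \hat{\Omega} \twoheadrightarrow \hat{R}_\varphi$ is surjective, every $\rho \in \hat{R}_\varphi$ admits a lift $\tilde\rho \in \zeta^{-1}(\rho)$, and the displayed isomorphism becomes $\pi_\rho \cong (\pi_{\tilde\rho\cdot[K],\lambda}, V_{\tilde\rho\cdot[K],\lambda})$; Corollary \ref{V isom} guarantees that the right-hand side depends only on $\rho$ and not on the choice of lift.

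This exhibits every element of the $L$-packet in the desired form, and conversely each representation $(\pi_{\tilde\rho\cdot[K],\lambda}, V_{\tilde\rho\cdot[K],\lambda})$ belongs to $\boldsymbol{\Pi}(\varphi_\lambda)$ since it is an unramified subquotient of $\mathrm{Ind}_{B(k)}^{G(k)}\lambda$. No genuine obstacle remains: the substance of the assertion has already been packaged into the two preceding results, and the only bookkeeping needed is that distinct $\rho$ yield non-isomorphic representations (provided by Corollary \ref{V isom}) and that the resulting enumeration exhausts $\boldsymbol{\Pi}(\varphi_\lambda)$ (provided by the surjectivity of $\zeta$ together with the bijection $\rho \mapsto \pi_\rho$).
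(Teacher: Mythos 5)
Your proof is correct and is exactly the argument the paper intends: the paper gives no proof, stating only that the corollary follows ``immediately'' from Theorem \ref{main theorem} and Corollary \ref{V isom}. Your chain of reasoning --- the normalization yields $\pi_{\mathbf{1}}^{[K]}\neq 0$, applying Theorem \ref{main theorem} at $\rho=\mathbf{1}$ together with uniqueness of the $K_\omega$-spherical subquotient of $I(\lambda)$ identifies $\pi_{\zeta(\omega)}$ with $(\pi_{\omega\cdot[K],\lambda},V_{\omega\cdot[K],\lambda})$, and surjectivity of $\zeta$ plus Corollary \ref{V isom} finish the enumeration and the well-definedness in the choice of lift --- supplies precisely the ``immediate'' details.
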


\section{The group $\Omega$\label{sec: group omega}}

Let $\Psi=(X,\Phi,\Delta,\check{X,}\check{\Phi},\check{\Delta})$
be a based root datum as defined in \cite[1.9]{Springer1979}. Let
$W=W(\Psi)$ be the Weyl group. Let $\{\alpha_{1},\ldots,\alpha_{l}\}$
be the set of simple roots $\Delta$. Let $\Phi_{\mbox{aff}}=\{a+n:a\in\Phi$,
$n$ is an integer if $\frac{1}{2}a\notin\Phi$ (resp. any odd integer
if $\frac{1}{2}a\in\Phi)\}$. Then $\Phi_{\mbox{aff}}$ is an affine
root system in the real vector space $V=V(\Psi):=\mathbb{R}\bigotimes\check{X}$
(see \cite[Theorem 1.2.1]{MR1976581}). The Weyl group $W$ acts on
$V$ as the group of reflections genrerated by $r_{1},\ldots,r_{l}$,
where $r_{i}$ fixes the hyperplace $\{x\in V:\alpha_{i}(x)=0)\}$.
The basis $\Delta$ determines a particular \textit{alcove} $C$ in
$V$ as follows: let $\tilde{\alpha_{0}}=\sum_{i=1}^{l}a_{i}\alpha_{i}$
be the highest root with respect to $\Delta.$ Let $\alpha_{0}$ be
the affine linear function $1-\tilde{\alpha}_{0}$ on $V$ and set
$a_{0}=1,$ so that 
\[
\sum_{i=0}^{l}a_{i}\alpha_{i}\equiv1.
\]
Then the alcove determined by $\Delta$ is the intersection of the
half spaces:
\[
C=\{x\in V:\alpha_{i}(x)>0\mbox{ for }0\leq i\leq l\}.
\]

The group $\tilde{W}=W\ltimes\check{X}$ is called the \textit{extended
affine Weyl group}. It contains the \textit{affine Weyl group} $\tilde{W}^{\circ}=W\ltimes\mathbb{Z}\check{\Phi}$
as a normal subgroup. $\tilde{W}$ acts transitively on the set of
alcoves in $V.$ Let 
\[
\Omega=\Omega(\Psi):=\left\{ \rho\in\tilde{W}:\rho\cdot C=C\right\} .
\]
Then, 
\[
\tilde{W}=\Omega\ltimes\tilde{W}^{\circ}
\]
and therefore $\Omega\cong\tilde{W}/\tilde{W}^{\circ}\cong\check{X}/\mathbb{Z}\check{\Phi}$.

\section{\label{R group theory}R-groups and intertwinig operators}

Let $\boldsymbol{U}$ denote the unipotent redical of $\boldsymbol{B}$.
Then $\boldsymbol{B}=\boldsymbol{T}\boldsymbol{U}.$ We will abbreviate
by $I(\lambda)$, the principal series representation $\mbox{Ind}_{B}^{G}\lambda$
of $\boldsymbol{G}.$ Let $\boldsymbol{N}:=N_{\boldsymbol{G}}\boldsymbol{A}$
and let $W$ be the relative Weyl group of $\boldsymbol{G}$. Then
$W\cong\boldsymbol{N}/\boldsymbol{T}$. Denote by $\bar{w}$, a representative
of an element $w$ of $W.$ Let $\bar{\boldsymbol{B}}=\boldsymbol{T}\bar{\boldsymbol{U}}$
be the Borel subgroup opposite to $\boldsymbol{B}$. Define the usual
intertwining operators :
\[
\mathrm{\boldsymbol{A}}(w,\lambda)\colon I(\lambda)\rightarrow I(w\lambda),
\]
\[
(\mathrm{\boldsymbol{A}}(\bar{w},\lambda)f)(g)=\int_{\boldsymbol{U}\cap w\bar{\boldsymbol{U}}w^{-1}}f(gu\bar{w})du.
\]
 These operators converge in an appropriate domain and they may be
analytically continued so that they are defined for all unitary $\lambda$
\cite[Sec. 2]{keys1982reducibility}. Further, they satisfy the co-cycle
relation
\begin{equation}
\mathrm{\boldsymbol{A}}(\bar{w}_{1}\bar{w}_{2},\lambda)=\mathrm{\boldsymbol{A}}(\bar{w}_{1},\bar{w}_{2}\lambda)\mathrm{\boldsymbol{A}}(\bar{w}_{2},\lambda),
\end{equation}
 provided 
\begin{equation}
\mbox{length}(w_{1}w_{2})=\mbox{length}(w_{1})+\mbox{length}(w_{2}).
\end{equation}
 Define the normalized intertwining operators 
\begin{equation}
\mathscr{A}(\bar{w},\lambda):=\frac{1}{c_{w}(\lambda)}\mathrm{\boldsymbol{A}}(\bar{w},\lambda),
\end{equation}
 where $c_{w}(\lambda)$ is the Harish-Chandra $c$-function. Then
with appropriate choices of the coset representatives $\bar{w}$ of
the Weyl group elements, the co-cycle relation 
\begin{equation}
\mathrm{\mathscr{A}}(\bar{w}_{1}\bar{w}_{2},\lambda)=\mathrm{\mathrm{\mathscr{A}}}(\bar{w}_{1},\bar{w}_{2}\lambda)\mathrm{\mathscr{A}}(\bar{w}_{2},\lambda),
\end{equation}
 holds without any condition on the lengths of $w_{1}$ and $w_{2}$. 

Let $\Phi_{rel}$ be the relative roots. Then $W$ is the Weyl group
associated to $\Phi_{rel}$. For $\alpha\in\Phi_{rel}$, let $s_{\alpha}$
denote the reflection associated to $\alpha$. Define
\[
\Delta^{\prime}=\{\alpha\in\Phi_{rel}:A(w,\lambda)\mbox{ is scalar}\},
\]
 and let $W^{\prime}$ be the reflection group $<s_{\alpha}:\alpha\in\Delta^{\prime}>$.
Let
\[
W_{\lambda}=\{w\in W:w\lambda=\lambda\}.
\]
 Define 
\begin{equation}
R=R(\boldsymbol{G},\lambda)=\{w\in W_{\lambda}:\alpha\in\Delta^{\prime}\mbox{ and }\alpha>0\mbox{ imply }w\alpha>0\}.
\end{equation}
 Then 
\begin{equation}
W_{\lambda}=W^{\prime}\rtimes R.
\end{equation}
 For a unitary, unramified character, the Knapp-Stein $R$-group is
abelian. This is shown in \cite{keys1982reducibility} for simply
connected, almost simple, semi-simple groups. In Corollary \ref{R-group corr},
we have proved it for any connected reductive group. It then follows
from \cite[Thm. 2.4]{Keys1987} ,
\begin{thm}
For an unramified unitary character $\lambda$ of $T$,\end{thm}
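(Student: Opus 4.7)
The plan is to deduce this theorem directly from results already assembled. Corollary \ref{R-group corr} asserts that for any unramified unitary character $\lambda$ of $T$, the Knapp--Stein $R$-group $R(\boldsymbol{G},\lambda)$ is abelian, for $\boldsymbol{G}$ any connected reductive group; this extends the result of \cite{keys1982reducibility} from the simply connected almost simple semisimple case. Once this abelianness is granted, the statement is read off from \cite[Thm. 2.4]{Keys1987}, which concerns precisely the structure of unitary principal series whose $R$-group is abelian.

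More concretely, I would proceed as follows. Using the normalized intertwining operators $\mathscr{A}(\bar w,\lambda)$ of Section \ref{R group theory} (with coset representatives $\bar w$ fixed so that the unconditional cocycle relation holds), one verifies that for $w\in R$ the operator $\mathscr{A}(\bar w,\lambda)$ lies in $\mathrm{End}_G(I(\lambda))$. Keys' theorem identifies the assignment $w\mapsto \mathscr{A}(\bar w,\lambda)$ as inducing an algebra isomorphism $\mathbb{C}[R]\xrightarrow{\sim}\mathrm{End}_G(I(\lambda))$. Since $R$ has been shown to be abelian, this commuting algebra is commutative and semisimple, so $I(\lambda)$ decomposes into $|R|$ pairwise inequivalent irreducible subrepresentations $\pi_\rho$ indexed by $\rho\in\hat R$; by the description of $\boldsymbol{\Pi}(\varphi_\lambda)$ recalled in the Introduction, these constituents are exactly the members of the $L$-packet.

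The only real obstacle is the bookkeeping needed to see that Keys' hypotheses specialize correctly to the present setting: his Theorem 2.4 was proved assuming abelianness of $R$ together with well-behaved normalization and convergence of the intertwining operators on unitary principal series. The abelianness is supplied by Corollary \ref{R-group corr}, while the required normalization is built into the operators $\mathscr{A}(\bar w,\lambda)$ of Section \ref{R group theory}. Beyond matching these conventions, the proof is a direct quotation of \cite[Thm. 2.4]{Keys1987}.
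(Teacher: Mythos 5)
Your proposal matches the paper's own treatment: the theorem is stated as an immediate consequence of Corollary \ref{R-group corr} (abelianness of the $R$-group for unramified unitary characters on any connected reductive group) combined with the normalization conventions of Section \ref{R group theory} and a citation of \cite[Thm.~2.4]{Keys1987}. The paper offers no further argument beyond this reduction, so your additional gloss on how the abelian commuting algebra forces a multiplicity-one decomposition indexed by $\hat R$ is a harmless elaboration of the same route.
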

\begin{enumerate}
\item The commuting algebra End$(I(\lambda))$ of $I(\lambda)$ is isomorphic
to the group algebra $\mathbb{C}[R]$.
\item $I(\lambda)$ decomposes with multiplicity one.
\item The irreducible components of $I(\lambda)$ are parametrized by the
characters of $R$. 
\end{enumerate}
Let $I\left(\lambda\right)\cong\underset{\rho\in\hat{R}}{\oplus}\pi_{\rho}$
be a decomposition of the pricipal series, where $\hat{R}$ is the
group of characters of $R.$ This parametrization is not unique. It
depends on the choice of normalizations of intertwining operators
giving End($I(\lambda))\cong\mathbb{C}[R].$ But any two normalizations
must differ by a one dimentional character $\rho^{\prime}$ of $R$,
i.e.,
\[
\mathscr{A}^{\prime}(r,\lambda)=\rho^{\prime}(r)\mathscr{A}(r,\lambda).
\]
Then if $\pi$ corresponds to the character $\rho$ in the first normalization,
then it corresponds to the character $\rho+\rho^{\prime}$ in the
second normalization.

\section{\label{image under LLC}Image of an unramified character under LLC}

In this section, we review the description of the image of an unramified
character, under the local Langlands correspondence for tori.

Let $\boldsymbol{T}$ be a torus defined over $k$. Assume that $\boldsymbol{T}$
splits over an unramifed extension $k^{\prime}$ of $k$. As before,
we denote by $T_{\circ}$, the maximal compact subgroup of $T$. Given
$t\in\boldsymbol{T}(k^{\prime})$, let $\nu(t)\in\mbox{Hom}(X^{*}(\boldsymbol{T}),\mathbb{Z})$
be defined by $\nu(t)(m)=\mbox{ord}(m(t))$. Then the map $t\mapsto\nu(t)$
induces an isomorphism (see \cite[Sec 9.5]{Borel1979}):
\begin{eqnarray*}
T/T_{\circ} & \cong & X_{*}(\boldsymbol{T})^{\sigma}\\
 & = & X^{*}(\hat{\boldsymbol{T}})^{\sigma}\\
 & \cong & X^{*}(\hat{\boldsymbol{T}}_{\sigma}).
\end{eqnarray*}
Here $\hat{\boldsymbol{T}}_{\sigma}$ denotes the co-invariant of
$\hat{\boldsymbol{T}}$ with respect to $\sigma$. From this we get
\begin{eqnarray}
\mbox{Hom}(T/T_{\circ},\mathbb{C}^{\times}) & \cong & \mbox{Hom}(X^{*}(\hat{\boldsymbol{T}}_{\sigma}),\mathbb{C}^{\times})\nonumber \\
 & \cong & \hat{\boldsymbol{T}}_{\sigma}.\label{eq:satake-1}
\end{eqnarray}
Thus to an unramified character $\chi$ of $T,$ we can associate
the $\sigma$-conjugacy class of a semisimple element $s_{\chi}$
of $\hat{\boldsymbol{T}}$. In fact, this is the image of $\chi$
under the Local Langlands correspondence for tori, 
\[
\xymatrix{\mbox{Hom}(T,\mathbb{C}^{\times})\ar@{->}[r]_{\sim}^{LLC} & H^{1}(W_{k},\hat{\boldsymbol{T})}\\
\mbox{Hom}(T/T_{\circ},\mathbb{C}^{\times})\ar@{^{(}->}[u]\ar@{->}[r]^{\sim} & \hat{\boldsymbol{T}}_{\sigma}\cong H^{1}(W_{k}/I,\hat{\boldsymbol{T}})\ar@{^{(}->}[u]^{infl}
}
\]

\section{Coefficient of the spherical vector}

Assume $\lambda$ to be unitary. Let $K$ be a hyperspecial subgroup
of $G$. Let $f_{K}\in I(\lambda)^{K}$ be such that $f_{K}(1)=1.$
Let $\rho_{\lambda,[K]}$ be the character of $R$ defined by $\mathscr{A}\left(r,\lambda\right)f_{K}=\rho_{\lambda,[K]}\left(r\right)f_{K}$.
Then,
\begin{lem}
\label{lem pi K-sph }$\pi_{\rho}^{[K]}\neq0\iff\rho_{\lambda,[K]}=\rho$.\end{lem}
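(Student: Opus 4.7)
The plan is to exploit the one-dimensionality of $I(\lambda)^K$ together with the isomorphism $\mathbb{C}[R] \cong \mathrm{End}(I(\lambda))$ recalled in Section \ref{R group theory}. By Borel--Casselman--Matsumoto theory for the unramified principal series (cited as \cite[sec. 4.4]{MR546593} in the paper), $\dim I(\lambda)^K = 1$ for every hyperspecial $K$, so $f_K$ is a basis vector of this space. This rigidity is what drives everything.

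For each $r \in R \subseteq W_\lambda$ we have $r\lambda = \lambda$, so the normalized intertwiner $\mathscr{A}(r,\lambda)$ is a $G$-equivariant endomorphism of $I(\lambda)$ and therefore preserves the $K$-fixed subspace. Since $I(\lambda)^K$ is one-dimensional, $\mathscr{A}(r,\lambda)$ acts on $f_K$ by a scalar, and that scalar is $\rho_{\lambda,[K]}(r)$ by the very definition of $\rho_{\lambda,[K]}$. In other words, $f_K$ is a simultaneous eigenvector for the family $\{\mathscr{A}(r,\lambda)\}_{r \in R}$ with eigencharacter $\rho_{\lambda,[K]}$.

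Next I would identify $\pi_\rho$ with an eigenspace. Under the isomorphism $\mathbb{C}[R] \cong \mathrm{End}(I(\lambda))$, the irreducible summand $\pi_\rho$ in the decomposition $I(\lambda) \cong \bigoplus_{\rho' \in \hat{R}} \pi_{\rho'}$ is precisely the isotypic subspace on which each $\mathscr{A}(r,\lambda)$ acts by the scalar $\rho(r)$. Letting $p_\rho : I(\lambda) \twoheadrightarrow \pi_\rho$ denote the projector, one has $p_\rho \circ \mathscr{A}(r,\lambda) = \rho(r)\, p_\rho$. Evaluating on $f_K$ and using the eigenvalue from the previous paragraph yields
\[
\rho_{\lambda,[K]}(r)\, p_\rho(f_K) = \rho(r)\, p_\rho(f_K) \qquad \text{for all } r \in R.
\]
Since characters of $R$ separate points, $p_\rho(f_K) = 0$ whenever $\rho \neq \rho_{\lambda,[K]}$. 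Because $\sum_\rho p_\rho = \mathrm{id}$ and $f_K \neq 0$, we conclude $f_K \in \pi_{\rho_{\lambda,[K]}}$; in particular $\pi_{\rho_{\lambda,[K]}}^{[K]} \neq 0$.

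For the converse, any non-zero $K$-fixed vector in $\pi_\rho$ lies in $I(\lambda)^K = \mathbb{C} f_K \subseteq \pi_{\rho_{\lambda,[K]}}$, and by directness of the sum this forces $\rho = \rho_{\lambda,[K]}$. I expect the only real subtlety to be bookkeeping: keeping straight the distinction between the abstract indexing character $\rho \in \hat{R}$ and the concrete eigenvalues of the normalized intertwiners, and remembering that the parametrization of $\{\pi_\rho\}$ is calibrated so that $\mathscr{A}(r,\lambda)$ acts on $\pi_\rho$ exactly by $\rho(r)$. Once these are aligned, the lemma is an immediate consequence of the uniqueness of the spherical vector.
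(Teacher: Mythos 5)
Your proposal is correct and takes essentially the same approach as the paper: both recognize $f_K$ as an eigenvector of the family $\{\mathscr{A}(r,\lambda)\}_{r\in R}$ with eigencharacter $\rho_{\lambda,[K]}$, identify $\pi_\rho$ with the $\rho$-eigenspace, and conclude via the projectors onto these eigenspaces (your $p_\rho$ is exactly the paper's $\mathcal{P}_\rho = \tfrac{1}{|R|}\sum_{r\in R}\rho(r)^{-1}\mathscr{A}(r,\lambda)$). The paper computes $\mathcal{P}_\rho f_K$ directly by character orthogonality, while you reach the same vanishing via the eigenvalue equation and the fact that characters separate points; this is a cosmetic rather than a structural difference.
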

\begin{proof}
The operators 

\begin{equation}
\mathcal{P}_{\rho}=\frac{1}{|R|}\sum_{r\in R}\rho\left(r\right)^{-1}\mathscr{A}\left(r,\lambda\right)
\end{equation}

are $\hat{|R|}$ orthogonal projections onto the invariant subspaces
of $I(\lambda).$ Let $U_{\rho}=\mathcal{P}_{\rho}U$, where $U$
is the space realizing $I(\lambda)$. Then

\[
U=\bigoplus_{\rho\in\hat{R}}U_{\rho}.
\]

Since

\[
\mathscr{A}\left(r,\lambda\right)f_{K}=\rho_{\lambda,[K]}\left(r\right)f_{K}.
\]
 Therefore

\begin{eqnarray*}
\mathcal{P}_{\rho}f_{K} & = & \frac{1}{|R|}\left(\sum_{r\in R}\rho\left(r\right)^{-1}\rho_{\lambda,[K]}\left(r\right)\right)f_{K}\\
 & = & \begin{cases}
0 & \rho_{\lambda,[K]}\neq\rho\\
f_{K} & \rho_{\lambda,[K]}=\rho,
\end{cases}
\end{eqnarray*}

$i.e.,$ $U_{\rho}^{K}\neq0$ $\iff$$\rho_{\lambda,[K]}=\rho$. 
\end{proof}
Thus, to understand which representation in the $L$-packet $\Pi(\varphi_{\lambda})$
is spherical for which hyperspecial conjugacy class, we need to understand
how the charactes $\rho_{\lambda,[K]}$ relate to each other for different
choices of conjugacy classes $[K]$.

\section{\label{split case}Proof in a special case}

To give the main ideas of the proof quickly, we first prove our theorem
for the simplest case when $\boldsymbol{G}$ is semisimple, almost
simple, simply connected and split and when $\lambda$ is unitary.
We will use the notations introduced in section \ref{sec: group omega}.

The local Langlands correspondence for tori induces an isomorphism,
\[
\mbox{Hom}(T/T_{\circ},\mathbb{C}^{\times})\cong\hat{\boldsymbol{T}}.
\]
 The image of the unramified unitary characters under this isomorphism
is the subtorus $\hat{\boldsymbol{U}}\cong X\otimes\mathbb{S}^{1}\subset\hat{\boldsymbol{T}}\cong X\otimes\mathbb{C}^{\times}$.
Now $\hat{\boldsymbol{U}}\cong X\otimes(\mathbb{R}/\mathbb{Z})\cong X\otimes\mathbb{R}/X$.
Therefore $\hat{\boldsymbol{U}}/W\cong X\otimes\mathbb{R}/W\ltimes X=V(\check{\Psi})/\tilde{W}.$
Thus, the LLC induces a natural ismorphism $\mbox{Hom}(T/T_{\circ},\mathbb{S}^{1})/W\cong V(\check{\Psi})/\tilde{W}$.
Here $\mathbb{S}^{1}$ denotes the unit circle in $\mathbb{C}$. Thus,
to a unitary unramified character $\lambda$, we can naturally associate
a semisimple element $s=s_{\lambda}\in\hat{\boldsymbol{T}}$ which
corresponds to a point $x=x_{\lambda}$ in the closure of the alcove
$C(\check{\Psi})$. We'll denote $\Omega(\check{\Psi})$ by $\Omega$
whenever there is no ambiguity. Let $\Omega_{x}$ denote the stabilizer
of $x$ in $\Omega$. 
\begin{prop}
There is a natural isomorphism $\Omega_{x}\cong R.$\end{prop}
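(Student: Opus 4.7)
The plan is to realize both $R$ and $\Omega_{x}$ as quotients built from stabilizers of $x$ in the two nested affine Weyl groups $\tilde{W}^{\circ}(\check{\Psi})\subset\tilde{W}(\check{\Psi})$ acting on $V(\check{\Psi})=X\otimes\mathbb{R}$, and then to extract the desired isomorphism from the semidirect decomposition $\tilde{W}(\check{\Psi})=\Omega(\check{\Psi})\ltimes\tilde{W}^{\circ}(\check{\Psi})$ recorded in Section~\ref{sec: group omega}.

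First I would set up the dictionary between the character and the point. Under the identification $\hat{\boldsymbol{T}}\cong X\otimes\mathbb{C}^{\times}$, the exponential $x\mapsto\exp(2\pi ix)$ carries $x_{\lambda}$ to $s_{\lambda}$, so a root $\beta\in\Phi$ (viewed as the character $\check{\beta}$ of $\hat{\boldsymbol{T}}$) satisfies $\check{\beta}(s_{\lambda})=1$ exactly when $\langle\check{\beta},x\rangle\in\mathbb{Z}$, i.e., when some affine hyperplane $\{v:\langle\check{\beta},v\rangle=n\}$ of $V(\check{\Psi})$ passes through $x$. Because only the zero translation fixes $x$, the projection $\tilde{W}(\check{\Psi})\twoheadrightarrow W$ restricts to an injection on $\tilde{W}(\check{\Psi})_{x}$ with image $\{w\in W:wx-x\in X\}=W_{\lambda}$, whence $\tilde{W}(\check{\Psi})_{x}\cong W_{\lambda}$. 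Applying the same projection to the affine Weyl subgroup and using the standard fact that a point stabilizer in an affine Coxeter group is generated by the reflections it contains, $\tilde{W}^{\circ}(\check{\Psi})_{x}$ is generated by the affine reflections $s_{\check{\beta},n}$ with $\langle\check{\beta},x\rangle=n$; these project in $W$ to exactly the reflections $s_{\beta}$ for $\beta\in\Delta^{\prime}$, giving $\tilde{W}^{\circ}(\check{\Psi})_{x}\cong W^{\prime}$.

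Finally, I would feed these identifications into the sequence obtained by applying the stabilizer functor to $1\to\tilde{W}^{\circ}(\check{\Psi})\to\tilde{W}(\check{\Psi})\to\Omega(\check{\Psi})\to 1$ at the point $x$, yielding
\[
1\to\tilde{W}^{\circ}(\check{\Psi})_{x}\to\tilde{W}(\check{\Psi})_{x}\to\Omega(\check{\Psi})_{x}\to 1.
\]
The subtle step is to verify that the image of the last map really is all of $\Omega(\check{\Psi})_{x}$: given $\tilde{w}\in\tilde{W}(\check{\Psi})_{x}$, write $\tilde{w}=\omega w^{\circ}$ with $\omega\in\Omega(\check{\Psi})$ and $w^{\circ}\in\tilde{W}^{\circ}(\check{\Psi})$; then $w^{\circ}x=\omega^{-1}x$ lies in $\overline{C}$ because $\Omega(\check{\Psi})$ preserves $\overline{C}$, and since $\overline{C}$ is a strict fundamental domain for $\tilde{W}^{\circ}(\check{\Psi})$ this forces $w^{\circ}x=x$ and hence $\omega x=x$. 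Combined with $W_{\lambda}=W^{\prime}\rtimes R$, this gives $R\cong W_{\lambda}/W^{\prime}\cong\Omega(\check{\Psi})_{x}$, naturally in $\lambda$ since every identification is. The main obstacle is this fundamental-domain argument, which requires both the strict fundamental-domain property of $\overline{C}$ for $\tilde{W}^{\circ}(\check{\Psi})$ and the fact that $\Omega(\check{\Psi})$ preserves $\overline{C}$; a secondary subtle ingredient is the reflection-generation theorem for affine Coxeter stabilizers, which is what ties the a priori abstract group $W^{\prime}$ to the concrete geometric stabilizer $\tilde{W}^{\circ}(\check{\Psi})_{x}$.
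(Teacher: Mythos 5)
Your argument takes a genuinely different route from the paper's. The paper works on the dual-group side: it cites Reeder \cite[Prop.~2.1]{MR2674853} for $\Omega_{x}\cong\pi_{0}(Z_{\hat{\boldsymbol{G}}}s)$ and Keys \cite[Prop.~2.6]{Keys1987} for $\pi_{0}(Z_{\hat{\boldsymbol{G}}}s)\cong R$, and in the detailed general version (Proposition~\ref{prop- Omega relates R}) it establishes $\tilde{W}_{x}\cong\hat{\boldsymbol{N}}\cap S_{\delta}/\hat{\boldsymbol{T}}^{\sigma}$ and $\tilde{W}_{x}^{\circ}\cong\hat{\boldsymbol{N}}\cap S_{\delta}^{\circ}/(\hat{\boldsymbol{T}}^{\sigma})^{\circ}$ before invoking Keys's short exact sequence $1\to\hat{\boldsymbol{N}}\cap S_{\delta}^{\circ}/(\hat{\boldsymbol{T}}^{\sigma})^{\circ}\to\hat{\boldsymbol{N}}\cap S_{\delta}/\hat{\boldsymbol{T}}^{\sigma}\to R_{\lambda}\to1$ together with $R\cong R_{\lambda}$. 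You instead stay entirely on the primal side, identifying $\tilde{W}_{x}$ with $W_{\lambda}$ and $\tilde{W}_{x}^{\circ}$ with $W^{\prime}$ via the projection $\tilde{W}\twoheadrightarrow W$, and then reading off $\Omega_{x}\cong W_{\lambda}/W^{\prime}\cong R$ from $W_{\lambda}=W^{\prime}\rtimes R$. The steps you flag as subtle (the strict-fundamental-domain argument for surjectivity onto $\Omega_{x}$, and the reflection-generation fact for $\tilde{W}_{x}^{\circ}$) are indeed the right things to worry about and are handled correctly; the identification $\tilde{W}_{x}\cong W_{\lambda}$ via $wx-x\in X$ is also right and is the primal-side mirror of what the paper does with $\hat{\boldsymbol{N}}^{\delta}$.

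However, there is a real gap at the step where you assert that the reflections $s_{\check{\beta},n}$ with $\langle\check{\beta},x\rangle=n$ ``project in $W$ to exactly the reflections $s_{\beta}$ for $\beta\in\Delta^{\prime}$.'' Recall that $\Delta^{\prime}$ is defined analytically, as the set of relative roots $\alpha$ for which the (unnormalized) intertwining operator $A(s_{\alpha},\lambda)$ is scalar. Your claim amounts to: for unitary unramified $\lambda$, $A(s_{\alpha},\lambda)$ is scalar if and only if $\check{\alpha}(s_{\lambda})=1$, equivalently $\langle\check{\alpha},x_{\lambda}\rangle\in\mathbb{Z}$. That equivalence is true, but it is a substantive piece of harmonic analysis — it comes from the explicit form of the Harish--Chandra $c$-function / Plancherel density for unramified principal series (Keys--Silberger, or Casselman's unramified computation), and is precisely the analytic input that the paper channels through Keys's Proposition~2.6. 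You cannot deduce it from the affine-Weyl-group geometry alone, because $\Delta^{\prime}$ is not defined geometrically. If you add a citation (or a short Plancherel computation) establishing $\Delta^{\prime}=\{\alpha\in\Phi_{rel}:\langle\check{\alpha},x_{\lambda}\rangle\in\mathbb{Z}\}$, the remainder of your argument closes up cleanly and gives an attractive, dual-group-free proof.
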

\begin{proof}
By \cite[Prop 2.1]{MR2674853}, $\Omega_{x}\cong\pi_{0}\left(Z_{\boldsymbol{\hat{G}}}s\right)$,
the connected component group of the centralizer of the semi-simple
element $s$. By \cite[Proposition 2.6]{Keys1987}, $\pi_{0}\left(Z_{\boldsymbol{\hat{G}}}s\right)\cong R.$
Thus, $\Omega_{x}\cong R$. Infact, it follows from the proofs of
\cite[Prop 2.1]{MR2674853} \& \cite[Prop. 2.6]{Keys1987} that isomorphism
is induced by the natural projection $\tilde{W}\twoheadrightarrow W$.
We defer these details until proposition \ref{prop- Omega relates R},
where we prove our statement for more general groups.
\end{proof}
Let $\check{P}$ be the co-weight lattice of $\boldsymbol{G}$. Then
the set of conjugacy classes of hyperspecial points of $G$ form a
principal homogenous space for the group $\check{P}/\check{X}$. This
group is in duality with the group $\Omega\cong X/\mathbb{Z}\Phi$.
Denote by $\left(,\right):(\check{P}/\check{X})\times\Omega\rightarrow\mathbb{Q}/\mathbb{Z}$
the pairing between them. Using the last proposition, we can realize
$R$ as a subgroup of $\Omega$ by the natural embedding $R\cong\Omega_{x}\subset\Omega$.
For $r\in R,$ and $\omega\in\check{P}/\check{X}$, let $\rho_{\omega}\left(r\right)=e^{-2\pi i\left(\omega,r\right)}$.
Then we have a natural surjection $\hat{\Omega}\twoheadrightarrow\hat{R}$
given by $\omega\mapsto\rho_{\omega}$. Let $K_{0}$ be a hyperspecial
subgroup satisfying $\pi_{0}^{[K_{0}]}\neq0.$ Pick $K_{\omega}\in\omega\cdot[K_{0}]$
and let $f_{\omega}\in I(\lambda)^{K_{\omega}}$ be the spherical
vector such that $f_{\omega}\left(1\right)=1$.
\begin{prop}
\label{intertwining coeff} $\mathscr{A}(r,\lambda)f_{\omega}=\rho_{\omega}(r)f_{\omega}.$\end{prop}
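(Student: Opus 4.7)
The plan is to reduce to the base case $\omega=0$, where by the Gindikin-Karpelevich / Macdonald formula $\mathscr{A}(r,\lambda) f_0 = f_0$, and transfer via an explicit twist by an element of the adjoint torus. First, since $I(\lambda)^{K_\omega}$ is one-dimensional and $\mathscr{A}(r,\lambda)$ is a $G$-intertwiner, it stabilizes this line and acts on it by the scalar $(\mathscr{A}(r,\lambda) f_\omega)(1)$, which is what one has to compute.

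To relate $f_\omega$ to $f_0$, fix a lift $\check\omega\in\check P$ of $\omega$ and set $g_\omega = \check\omega(\pi)\in\boldsymbol{T}_{ad}(k)$. Conjugation $\phi = \mathrm{Int}(g_\omega)$ is an automorphism of $G$ (not inner in $G$ when $\omega\neq 0$) that fixes $T$ pointwise, normalizes $B$, and sends the hyperspecial vertex stabilized by $K_0$ to a vertex of the class $\omega\cdot[K_0]$. Choosing the representative $K_\omega = \phi(K_0)$ (an auxiliary $G$-conjugation does not affect the statement), the function $f_0\circ\phi^{-1}$ is spherical for $K_\omega$ with value $1$ at the identity, and therefore equals the $f_\omega$ of the statement.

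Substituting into the integral and using $\phi^{-1}(\bar r) = t_\omega\, \bar r$ with $t_\omega = ((r-1)\check\omega)(\pi)$, one exploits the fact that $t_\omega$ lives in $T(k)$ rather than merely in $\boldsymbol{T}_{ad}(k)$; this is because $W$ acts trivially on $\check P/\check X$, so $(r-1)\check\omega\in\check X$. After the change of variable $v = \phi^{-1}(u)$ in $U_r$, moving $t_\omega$ through the resulting unipotent factor (producing $\lambda\delta^{1/2}(t_\omega)$), and reducing the remaining integral to $\int f_0(v\bar r)\,dv = c_r(\lambda)$ by Gindikin-Karpelevich, the desired scalar emerges as a product of Jacobian $q$-powers, $\delta^{1/2}(t_\omega)$, and $\lambda(t_\omega)$.

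The combinatorial heart of the proof, and its main obstacle, is checking that all the $q$-power contributions cancel, via the classical identity $2\rho_r = \rho - r\rho$ together with $(r-1)\check\omega\in\check X$. What then remains is the pure phase $\lambda(t_\omega) = \exp\bigl(2\pi i\,\langle (r-1)\check\omega,\, x_\lambda\rangle\bigr)$. Finally, the identification with $\rho_\omega(r) = e^{-2\pi i(\omega,r)}$ is made using the previous proposition: the element $r\in R\cong\Omega_{x_\lambda}$ lifts uniquely to $\tilde r = w_r\, t_{\chi_r}\in\Omega_{x_\lambda}\subset\tilde W(\check\Psi)$ with $\chi_r = (1-r)x_\lambda \in X$, so that $(\omega,r) = \langle\check\omega,\chi_r\rangle$, matching the phase computed above.
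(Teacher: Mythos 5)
Your argument is correct in outline and takes a genuinely different route from the paper. The paper proves this proposition (in Section~\ref{main prop proof}) by a type-by-type verification: it invokes Keys' lemma to compute the coefficient $\tilde{c}_\lambda(\omega,s_\alpha)$ for each simple reflection, then writes each $r\in R$ as an explicit word in the $s_\alpha$ and chases the cocycle relation through that word, separately for types $A_n$, $B_n$, $C_n$, $D_n$, $E_6$, $E_7$, $^2A_{2n-1}$, $^2D_{n+1}$. By contrast, your argument is uniform: the single identity $f_\omega = f_0\circ\mathrm{Int}(g_\omega)^{-1}$, with $g_\omega=\check\omega(\varpi)\in \boldsymbol{T}_{ad}(k)$ realizing the translation by $\check\omega$ on the apartment, transports the whole computation back to the base point $K_0$. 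After a change of variables in $U\cap r\bar{U}r^{-1}$ and pulling the resulting torus element through the $B$-equivariance, the Jacobian (a power of $q$ indexed by $2\rho_r=\rho-r\rho$) cancels against $\delta^{1/2}(t_\omega)$ by $W$-invariance of the canonical pairing, leaving only the unramified phase $\lambda(t_\omega)$; this matches $\rho_\omega(r)$ once $r$ is lifted to $\tilde r\in\Omega_{x_\lambda}$ with translation part $\chi_r=(1-r)x_\lambda\in Y$ and the pairing $(\omega,r)=\langle\check\omega,\chi_r\rangle$ is unwound via Lemma~\ref{jkyu lemma}. The observation that $(r-1)\check\omega\in\check X$ (so $t_\omega\in T(k)$, not merely $\boldsymbol{T}_{ad}(k)$) is exactly the point that makes the whole transport legal, and it is the geometric counterpart of the reason $\Omega_x$ embeds into $\Omega$ at all.

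Two small caveats, neither of which is a gap in the mathematics. First, you should fix the convention for the unnormalized operator with care: with the integral written as $\int f(\bar r^{-1}ug)\,du$ (Casselman/Bump convention) the $q$-exponent from the Jacobian is $-\langle\rho-r\rho,\check\omega\rangle$ and the one from $\delta^{1/2}$ is $-\langle\rho,(r^{-1}-1)\check\omega\rangle$, which cancel by $W$-invariance; with the integrand as literally printed in the paper the two $q$-contributions fail to cancel, so you are implicitly using the correct (standard) formula. Second, depending on where one places the inverse, the phase naturally appears as $\rho_\omega(r)$ or $\rho_\omega(r^{-1})=\rho_\omega(r)^{-1}$; since $R$ is abelian and the parametrization of the packet by $\hat R$ is itself only canonical up to a twist by a character, this is a bookkeeping issue, but it is worth pinning down so the sign in $\rho_\omega(r)=e^{-2\pi i(\omega,r)}$ is honored. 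With those conventions nailed down, this is a clean, case-free replacement for Section~\ref{calculations} that also clarifies why the answer can only involve the character $\lambda$ at $(r-1)\check\omega$ and nothing else.
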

\begin{proof}
By \cite[Section 4 lemma]{keys1982reducibility}, one can do a case
by case computation to show this. The calculations are shown in the
section \ref{main prop proof}.
\end{proof}
From the surjection $\hat{\Omega}\twoheadrightarrow\hat{R}$, we get
a natural action of $\hat{\Omega}$ on $\hat{R}$, namely, for $\rho\in\hat{R}$
and $\omega\in\hat{\Omega}$, we let $\omega\cdot\rho=\rho+\rho_{\omega}$.
Let $K$ be any hyperspecial subgroup. Let $\omega^{\prime}\in\hat{\Omega}$
be such that $\omega^{\prime}\cdot[K_{0}]=[K]$. Then from lemma \ref{lem pi K-sph },
it follows:
\begin{eqnarray*}
\pi_{\omega\cdot\rho}^{\omega\cdot[K]}\neq0 & \iff & \omega\cdot\rho=\rho_{\omega^{\prime}+\omega}\\
 & \iff & \rho=\rho_{\omega^{\prime}}\\
 & \iff & \pi_{\rho}^{\omega^{\prime}\cdot[K_{0}]}\neq0\\
 & \iff & \pi_{\rho}^{[K]}\neq0.
\end{eqnarray*}

\section{\label{Relative root system}Construction of the relative based root
datum}

Let $\Psi=\left(X,\Phi,\Delta,\check{X},\check{\Phi},\check{\Delta}\right)$
be the based root datum of $(\boldsymbol{G,}\boldsymbol{B,}\boldsymbol{T})$,
where $\boldsymbol{G}$ is an unramifed, connected reductive group,
$\boldsymbol{B}$ is a borel subgroup of $\boldsymbol{G}$ containing
a maximal torus $\boldsymbol{T}$. The follwing construction is given
in \cite{Yu}. 

Define a new 6-touple: 

\begin{eqnarray}
Y & = & (X_{\sigma})/torsion,\nonumber \\
\check{Y} & = & (\check{X})^{\sigma},\nonumber \\
S & = & \{\underbar{\ensuremath{a}}:a\in\Phi\},\mbox{ where }\underbar{\ensuremath{a}}=a|_{\check{Y}},\nonumber \\
\check{S} & = & \{\check{\alpha}:\alpha\in S\},\nonumber \\
E & = & \{\underbar{\ensuremath{a}}:a\in\Delta\},\nonumber \\
\check{E} & = & \{\check{\alpha}\colon\alpha\in E\}.\label{eq:6-touple}
\end{eqnarray}

The explanation for the defining formulas is as follows. We first
note that $Y$ and $\check{Y}$ are free abelian groups, dual to each
other under the canonical pairing $(\underbar{\ensuremath{x}},v)\mapsto<x,v>$,
for $\underbar{\ensuremath{x}}\in Y$, $v\in\check{Y}$, where $x$
is any preimage of $\underbar{\ensuremath{x}}$ in $X$. Define $\check{\alpha}$
for $\alpha\in S$ as follows:
\begin{equation}
\check{\alpha}=\begin{cases}
\underset{a\in\Phi:a|_{\check{Y}}=\alpha}{\sum\check{a},} & \mbox{if }2\alpha\notin S\\
\underset{a\in\Phi:a|_{\check{Y}}=\alpha}{2\sum\check{a},} & \mbox{if }2\alpha\in S
\end{cases}
\end{equation}
 
\begin{thm}
\cite{Yu}The 6-touple $\underbar{\ensuremath{\Psi}}=(Y,S,E,\check{Y},\check{S},\check{E})$,
with the canonical pairing between $Y$ and $\check{Y}$ and the correspondence
$S\rightarrow\check{S}$, $\alpha\mapsto\check{\alpha}$, is a based
root datum, which is not necessarily reduced. Moreover, \end{thm}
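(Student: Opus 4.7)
The plan is to verify the axioms of a based root datum for $\underbar{\ensuremath{\Psi}}$ in order, with the coroot normalization being the delicate point. Since $\sigma$ has finite order, $\check{Y}=\check{X}^{\sigma}$ is a subgroup of the free abelian group $\check{X}$ and is therefore free, while $Y=X_{\sigma}/\mathrm{torsion}$ is free by construction. The pairing $\langle\cdot,\cdot\rangle\colon X\times\check{X}\to\mathbb{Z}$ is $\sigma$-equivariant in the sense that $\langle\sigma x,\sigma v\rangle=\langle x,v\rangle$, so for $v\in\check{X}^{\sigma}$ and $x\in(\sigma-1)X$ one has $\langle x,v\rangle=0$; the pairing therefore descends to $Y\times\check{Y}\to\mathbb{Z}$. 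Passing to $\mathbb{Q}$-coefficients identifies $Y\otimes\mathbb{Q}$ and $\check{Y}\otimes\mathbb{Q}$ with the $\sigma$-coinvariants of $X\otimes\mathbb{Q}$ and the $\sigma$-invariants of $\check{X}\otimes\mathbb{Q}$, which are in perfect duality by complete reducibility of the finite cyclic group action; combined with both lattices being free over $\mathbb{Z}$, this yields a perfect integral pairing.

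Next I would check that each $\check{\alpha}$ lies in $\check{Y}$. For $v\in\check{Y}$, $(\sigma b)(v)=b(\sigma^{-1}v)=b(v)$, so the fiber $\Phi_{\alpha}:=\{b\in\Phi\colon b|_{\check{Y}}=\alpha\}$ is $\sigma$-stable and hence $\sum_{b\in\Phi_{\alpha}}\check{b}$ is $\sigma$-invariant. To verify $\langle\alpha,\check{\alpha}\rangle=2$, pick a lift $a\in\Phi_{\alpha}$ and expand in terms of the absolute pairings $\langle a,\check{b}\rangle$, $b\in\Phi_{\alpha}$. The main obstacle here is the case analysis on the shape of the $\sigma$-orbit of $a$: in the reduced case ($2\alpha\notin S$) the roots in $\Phi_{\alpha}$ are pairwise orthogonal, so $\langle a,\check{b}\rangle=0$ for $b\neq a$ and the raw sum equals $\langle a,\check{a}\rangle=2$; in the non-reduced case ($2\alpha\in S$), $\Phi_{\alpha}=\{a,\sigma a\}$ spans an $A_{2}$-subsystem with $a+\sigma a\in\Phi$ and $\langle a,\check{b}\rangle=-1$ for $b=\sigma a$, and the explicit factor of $2$ in the definition of $\check{\alpha}$ precisely normalizes the sum back to $2$. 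The only genuine input beyond linear algebra is thus the dichotomy for $\sigma$-orbits in rank-two absolute subsystems, which in the unramified setting reduces to inspecting $A_{1}\times A_{1}$ versus $A_{2}$ configurations.

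Finally, for the remaining axioms I would show that the reflection $s_{\alpha}\colon y\mapsto y-\langle y,\check{\alpha}\rangle\alpha$ on $Y$ is the restriction to $Y$ of a $\sigma$-equivariant composition of absolute reflections $s_{b}$ with $b\in\Phi$ projecting onto a positive integer multiple of $\alpha$; this composition preserves $\Phi$, so it descends to a permutation of $S$, and the dual argument handles $s_{\check{\alpha}}$ on $\check{S}$. That $E$ is a system of simple roots for $S$ is immediate from the additivity of the restriction map $\Phi\to S$ together with the fact that $\Delta$ is a basis of $\Phi$. That $S$ need not be reduced is witnessed by the relative root systems of unramified quasi-split odd special unitary groups, which are of type $BC_{n}$; this completes the verification.
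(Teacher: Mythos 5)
The paper does not prove this theorem; it is quoted directly from \cite{Yu}, so there is no in-paper argument to compare against. Judged on its own terms, your sketch captures most of the structure correctly (descent of the pairing to $Y\times\check{Y}$, $\sigma$-invariance of $\check{\alpha}$, the $A_{1}^{k}$-versus-$A_{2}$ dichotomy driving the computation of $\langle\alpha,\check{\alpha}\rangle=2$, and realizing $s_{\alpha}$ as the descent of a $\sigma$-equivariant product of absolute reflections), but one step as written is a genuine error.

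You assert that non-degeneracy of the descended pairing over $\mathbb{Q}$, ``combined with both lattices being free over $\mathbb{Z}$,'' yields a perfect integral pairing. That implication is false: the pairing on $\mathbb{Z}^{2}\times\mathbb{Z}^{2}$ given by the matrix $\mathrm{diag}(1,2)$ is $\mathbb{Q}$-non-degenerate with both sides free, yet it is not perfect. The argument has to be carried out integrally: $\mathrm{Hom}(Y,\mathbb{Z})=\mathrm{Hom}(X_{\sigma}/\mathrm{tor},\mathbb{Z})=\mathrm{Hom}(X_{\sigma},\mathbb{Z})=\mathrm{Hom}(X,\mathbb{Z})^{\sigma}=\check{X}^{\sigma}=\check{Y}$, where the first equality holds because $\mathrm{Hom}(-,\mathbb{Z})$ kills torsion and the third is the universal property of coinvariants against the trivial $\sigma$-module $\mathbb{Z}$; freeness of $Y$ then gives $\mathrm{Hom}(\check{Y},\mathbb{Z})\cong Y$ by double duality, so the pairing is perfect. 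Two further points are glossed over: your sketch says nothing about the ``Moreover'' clauses, and the second of them (that the fibers of $\Phi\to S$ are exactly the $\sigma$-orbits) is used tacitly when you identify $\Phi_{\alpha}$ with a single $\sigma$-orbit in the $\langle\alpha,\check{\alpha}\rangle$ calculation; it should be justified rather than assumed, e.g.\ by noting that $\underline{a}$ is determined by the averaging projector $P_{\sigma}a$ and that $P_{\sigma}$ separates $\sigma$-orbits in $\Phi$.
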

\begin{enumerate}
\item The homomorphism $W(\Psi)^{\sigma}\rightarrow\mathrm{\boldsymbol{GL}}(\check{Y})$,
$w\mapsto w|_{\check{Y}}$ is injective and the image is $W(\underbar{\ensuremath{\Psi}})$.
\item Let $a,b\in\Phi$. Then $\underbar{\ensuremath{a}}=\underbar{\ensuremath{b}}$
if and only if $a,b$ are in the same $\sigma$-orbit. 
\end{enumerate}

\section{Geometric description of the $R$-group}

Let $\underbar{\ensuremath{\check{\Psi}}}$ denote the dual of the
based root datum $\underbar{\ensuremath{\Psi}}$ defined in section
\ref{Relative root system}. Let $\underbar{V}=V(\check{\underbar{\ensuremath{\Psi}}})=Y\otimes\mathbb{R}$
and let $\tilde{W}=W\ltimes Y$, where $W=W(\underbar{\ensuremath{\Psi}})$
is the relative Weyl group. By equation \ref{eq:satake-1}, we have
$\mbox{Hom}(T/T_{\circ},\mathbb{C}^{\times})\cong\hat{\boldsymbol{T}}_{\sigma}$.
Under this isomorphism, the unramified unitary characters will correspond
to the $\sigma$-conjugacy classes of the compact subtorus $\hat{\boldsymbol{U}}\cong X\otimes\mathbb{S}^{1}$
of $\hat{\boldsymbol{T}}\cong X\otimes\mathbb{C}^{\times}$. We have,
\begin{eqnarray*}
\hat{\boldsymbol{U}} & \cong & X\otimes(\mathbb{R}/\mathbb{Z})\\
 & \cong & X\otimes\mathbb{R}/X.
\end{eqnarray*}

Therefore,
\begin{eqnarray*}
\boldsymbol{\hat{U}}_{\sigma}/W & \cong & Y\otimes\mathbb{R}/W\ltimes Y\\
 & \cong & \underbar{V}/\tilde{W}.
\end{eqnarray*}

By equation \ref{eq:satake-1}, it follows that the class of unitary
unramified characters in $\mbox{Hom}(T/T_{\circ},\mathbb{C}^{\times})/W$
are in one to one correspondence with the points of $\underbar{V}/\tilde{W}$.
Assume that the unramified character $\lambda$ of $T$ is unitary.
To the $W$ orbit of $\lambda$ we can therefore associate a point
$x=x_{\lambda}$ in the closure of the alcove $\underbar{C}:=C(\check{\underbar{\ensuremath{\Psi}}})$
of $\underbar{V}$. 

Let $\underbar{\ensuremath{\Omega}}:=\Omega(\check{\underbar{\ensuremath{\Psi}}})$.
We will also denote $\Omega(\check{\underbar{\ensuremath{\Psi}}})$
by $\Omega_{G}$ when the based root-datum is clear from the context.

We have $\underbar{\ensuremath{\Omega}}\cong Y/\mathbb{Z}E.$ Let
$\underbar{\ensuremath{\Omega}}_{x}$ be the stabilizer of $x$ in
$\underbar{\ensuremath{\Omega}}$. 
\begin{prop}
\label{prop- Omega relates R}The natural projection $\tilde{W}\twoheadrightarrow W$
induces an isomorphism\linebreak{}
 $\underbar{\ensuremath{\Omega}}_{x}\cong R$. \end{prop}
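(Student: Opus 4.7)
The plan is to carry out the argument of Section \ref{split case} essentially verbatim, with the relative based root datum $\underline{\Psi}$ of Section \ref{Relative root system} playing the role that the absolute datum $\Psi$ played in the split simply-connected case. Under the identification $\underline{V}/\tilde{W} \cong \hat{\boldsymbol{U}}_\sigma/W$ established just above the proposition, the point $x = x_\lambda$ in the closure of $\underline{C}$ corresponds to the $\sigma$-conjugacy class of the semisimple element $s = s_\lambda \in \hat{\boldsymbol{T}}$, so the geometric setup inside $(\underline{V}, \tilde{W})$ is formally the same as in the split case.

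The proof then proceeds in two steps. First, I would generalize Reeder's Proposition 2.1 of \cite{MR2674853} so that the ambient complex reductive group is the connected Frobenius-centralizer $\hat{\boldsymbol{G}}^{\sigma,\circ}$, whose root datum is exactly $\check{\underline{\Psi}}$ by the theorem of Section \ref{Relative root system}, rather than a split simply-connected group. Reeder's argument then produces a natural isomorphism
\[
\underline{\Omega}_x \;\cong\; \pi_0\bigl(Z_{\hat{\boldsymbol{G}}^{\sigma,\circ}}(s)/\hat{Z}^\sigma\bigr),
\]
induced by the projection $\tilde{W} \twoheadrightarrow W$. Second, I would invoke the generalization of Keys' Proposition 2.6 of \cite{Keys1987}, which identifies this component group with the Knapp-Stein $R$-group $R(\boldsymbol{G},\lambda)$. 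Composing the two isomorphisms gives $\underline{\Omega}_x \cong R$, and the naturality of each constituent ensures that the composite is induced by $\tilde{W} \twoheadrightarrow W$, as asserted.

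The main obstacle is extending the Reeder and Keys results beyond the split, simply-connected, almost-simple setting in which they were originally stated. The root datum $\check{\underline{\Psi}}$ need not be reduced (compare the two-case formula for $\check{\alpha}$ in Section \ref{Relative root system}), and the quotient by $\hat{Z}^\sigma$ in the definition of $R_\varphi$ must be tracked carefully through the centralizer calculation. Once the abelian-ness of $R$ is established for general connected reductive $\boldsymbol{G}$ (Corollary \ref{R-group corr}) and Reeder's identification of stabilizer with component group is seen to pass to the quotient by $\hat{Z}^\sigma$, the remainder of the argument is a direct transcription of the split simply-connected case, and the fact that all of the intermediate isomorphisms are induced by the natural projection $\tilde{W} \twoheadrightarrow W$ is preserved throughout.
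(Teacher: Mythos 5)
Your strategy of combining a Reeder-type stabilizer computation with Keys' identification of the component group is the right family of ideas, but the specific way you propose to generalize it opens a genuine gap. The Knapp--Stein $R$-group appearing in the proposition is, via Keys' Proposition 2.6, identified with $R_{\lambda}=S_{\delta}/S_{\delta}^{\circ}\hat{Z}^{\sigma}$, where $\delta=s\rtimes\sigma$ and $S_{\delta}=Z_{\hat{\boldsymbol{G}}}(\delta)$ is the \emph{twisted} centralizer of $\delta$ inside the full dual group $\hat{\boldsymbol{G}}$. Your plan instead produces $\pi_{0}\bigl(Z_{\hat{\boldsymbol{G}}^{\sigma,\circ}}(s)/\hat{Z}^{\sigma}\bigr)$, built from the \emph{untwisted} centralizer of a chosen representative $s$ inside the Frobenius fixed-point subgroup. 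These are not the same group a priori: $S_{\delta}=\{g\in\hat{\boldsymbol{G}}:\sigma(g)=s^{-1}gs\}$ and $S_{\delta}\cap\hat{\boldsymbol{G}}^{\sigma}=Z_{\hat{\boldsymbol{G}}^{\sigma}}(s)$ is in general a proper subgroup, so passing from one to the other (and checking that the component groups modulo $\hat{Z}^{\sigma}$ agree) is an additional substantive step that your outline treats as automatic. There is also a naturality wrinkle: the LLC hands you a $\sigma$-conjugacy class in $\hat{\boldsymbol{T}}_{\sigma}$ (coinvariants), not a well-chosen element of $(\hat{\boldsymbol{T}}^{\sigma})^{\circ}$, so the representative $s$ you need must be produced and shown not to affect the final isomorphism.

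The paper's proof avoids all of this by working directly in the twisted picture. It decomposes $\tilde{W}_{x}=\underbar{\ensuremath{\Omega}}_{x}\ltimes\tilde{W}_{x}^{\circ}$, and then invokes the argument of Reeder's Lemma~3.9 --- not Proposition~2.1 --- to establish the two identities $\tilde{W}_{x}\cong\hat{\boldsymbol{N}}\cap S_{\delta}/\hat{\boldsymbol{T}}^{\sigma}$ and $\tilde{W}_{x}^{\circ}\cong\hat{\boldsymbol{N}}\cap S_{\delta}^{\circ}/(\hat{\boldsymbol{T}}^{\sigma})^{\circ}$, proved inside $\hat{\boldsymbol{G}}$ with the twist built in from the start. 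Keys' Proposition~2.6 then applies as stated: it already furnishes the short exact sequence $1\to\hat{\boldsymbol{N}}\cap S_{\delta}^{\circ}/(\hat{\boldsymbol{T}}^{\sigma})^{\circ}\to\hat{\boldsymbol{N}}\cap S_{\delta}/\hat{\boldsymbol{T}}^{\sigma}\to R_{\lambda}\to 1$ together with $R\cong R_{\lambda}$, so no further generalization of Keys is needed. If you want to carry out your route through $\hat{\boldsymbol{G}}^{\sigma,\circ}$, you must first prove a comparison lemma identifying $\pi_{0}(S_{\delta})$ with $\pi_{0}(Z_{\hat{\boldsymbol{G}}^{\sigma,\circ}}(s))$ compatibly with the quotients by the respective centers; as written, the proposal asserts the conclusion of such a lemma without supplying it.
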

\begin{proof}
Let $\tilde{W}_{x}=\{w\in\tilde{W}:w\cdot x=x\}$ be the stabilizer
of $x$ in $\tilde{W}$. This group is finite and its normal subgroup
$\tilde{W}_{x}^{\circ}$, generated by reflections about the hyperplanes
through $x$, acts simply transitively on the set of alcoves containing
$x$ in their closure. It follows that
\[
\tilde{W}_{x}=\Omega_{x}\ltimes\tilde{W}_{x}^{\circ}
\]
 and therefore, 
\begin{equation}
\underbar{\ensuremath{\Omega}}_{x}\cong\tilde{W}_{x}/\tilde{W}_{x}^{\circ}.\label{eq:-2}
\end{equation}
 Let $\pi\colon\tilde{W}\rightarrow W$ be the projection map. Let
$\delta=s\rtimes\sigma$. Let $\hat{\boldsymbol{N}}=N_{\hat{\boldsymbol{G}}}\hat{\boldsymbol{T}}$
be the normalizer of $\hat{\boldsymbol{T}}$ in $\hat{\boldsymbol{G}.}$
Let $S_{\delta}=Z_{\hat{\boldsymbol{G}}}\delta$. Let $\hat{\boldsymbol{Z}}=Z(\hat{\boldsymbol{G)}}$
be the center of $\hat{\boldsymbol{G.}}$ Then the proof in \cite[lemma 3.9]{MR2674853}
shows :
\begin{equation}
\tilde{W}_{x}\cong\hat{\boldsymbol{N}}\cap S_{\delta}/\hat{\boldsymbol{T}}^{\sigma},\label{W_x 1}
\end{equation}
\begin{equation}
\tilde{W}_{x}^{\circ}\cong\hat{\boldsymbol{N}}\cap S_{\delta}^{\circ}/(\hat{\boldsymbol{T}}^{\sigma})^{\circ}.\label{W_x 2}
\end{equation}
We will include the proofs of \ref{W_x 1} and \ref{W_x 2} here for
the sake of completeness. Let $W_{\delta}:=\hat{\boldsymbol{N}}\cap S_{\delta}/\hat{\boldsymbol{T}^{\sigma}}=\hat{\boldsymbol{N}}^{\delta}/\hat{\boldsymbol{T}^{\sigma}}$
be the subgroup of $W$ whose elements can be represented by $\delta$
fixed elements of $\hat{\boldsymbol{N}}$. If $w\in W$ is the projection
of an element of $\tilde{W}_{x}$ then $w\cdot x-x\in Y$. Let $f$
be the order of $\sigma$ and let 
\[
P_{\sigma}=f^{-1}(1+\sigma+\ldots+\sigma^{f-1})\in\mbox{End}(V)
\]
 be the projection onto $\underbar{V}$. Here $V=V(\check{\Psi})$
as before. Then $Y\cong P_{\sigma}X$ and 
\begin{equation}
[(1-\sigma)V+X]\cap\underbar{V}=Y.\label{V,X identity}
\end{equation}
\cite[Lemma 3.4 (17)]{MR2674853}. There exists therefore, $v\in V$
and $y\in X$ such that 
\[
w\cdot x-x=(\sigma-1)v+y.
\]
 Setting $s=\mbox{exp}(x),$ $t=\mbox{exp}(v)$, we have 
\[
w(s)=t^{-1}\sigma(t)s.
\]
 By \cite[Lemma 6.2]{Borel1979}, we may choose $\dot{w}\in\hat{\boldsymbol{N}}^{\sigma}$
such that $w=\dot{w}\hat{\boldsymbol{T}}$. Then the element $n=t\dot{w}$
belongs to $\hat{\boldsymbol{N}}^{\sigma}$ and $n\hat{\boldsymbol{T}}=w$.
Thus the projection maps $\tilde{W}_{x}$ into $W_{\delta}$. The
argument is reversible showing that $\pi(\tilde{W}_{x})=W_{\delta}$.
Finally, since the kernel of $\pi$ is torsion free and $\tilde{W}_{x}$
is finite, the map is also injective, completing the proof of \ref{W_x 1}. 

Let $W_{\delta}^{\circ}=\pi(\tilde{W}_{x}^{\circ})$. Let $A_{f}$
be the set of affine roots vanishing on the hyperplanes passing through
$x$ and let $A_{f}^{v}$ be their vector parts. Then $W_{\delta}^{\circ}$
is the subgroup of $W_{\delta}$ generated by the roots $A_{f}^{v}$
and $W_{\delta}^{\circ}=\hat{\boldsymbol{N}}\cap S_{\delta}^{\circ}/(\hat{\boldsymbol{T}}^{\sigma})^{\circ}$
is the Weyl group $W(S_{\delta}^{\circ},(\hat{\boldsymbol{T}}^{\sigma})^{\circ})$
\cite[3.5]{MR794307}.

Let $R_{\lambda}=S_{\delta}/S_{\delta}^{\circ}\hat{\boldsymbol{Z}}^{\sigma}$.
Then Proposition 2.6 in \cite{Keys1987} states that there is a short
exact sequence
\begin{equation}
1\rightarrow\hat{\boldsymbol{N}}\cap S_{\delta}^{\circ}/(\hat{\boldsymbol{T}}^{\sigma})^{\circ}\rightarrow\hat{\boldsymbol{N}}\cap S_{\delta}/\hat{\boldsymbol{T}}^{\sigma}\rightarrow R_{\lambda}\rightarrow1\label{eq:3}
\end{equation}
 and $R\cong R_{\lambda}$. Therefore, 
\begin{eqnarray*}
R & \cong & (\hat{\boldsymbol{N}}\cap S_{\delta}/\hat{\boldsymbol{T}}^{\sigma})/(\hat{\boldsymbol{N}}\cap S_{\delta}^{\circ}/(\hat{\boldsymbol{T}}^{\sigma})^{\circ})\\
 & \cong & \tilde{W}_{x}/\tilde{W}_{x}^{\circ}\\
 & \cong & \underbar{\ensuremath{\Omega}}_{x}.
\end{eqnarray*}
The last isomorphism follows from equation \ref{eq:-2}. This completes
the proof of the proposition. 
\end{proof}
This result immediately gives the classification of $R$-groups. The
same classification is obtained in \cite[Sec. 3, Theorem]{Keys1982},
in a case by case manner. In particular, we have shown:
\begin{cor}
\label{R-group corr}The Knapp-Stein $R$-group for an unramified
unitary character is abelian. \end{cor}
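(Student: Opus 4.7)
The plan is to read off the corollary directly from Proposition~\ref{prop- Omega relates R}, which produces an isomorphism $R \cong \underbar{\ensuremath{\Omega}}_{x}$ for $x = x_{\lambda}$ the point in $\underbar{V}$ associated to the unitary unramified character $\lambda$. Once that identification is in hand, all the group-theoretic work is already done, and the only thing left is to notice that the ambient group $\underbar{\ensuremath{\Omega}}$ is abelian.

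Concretely, I would first recall from Section~\ref{sec: group omega} (applied to the relative based root datum $\underbar{\ensuremath{\Psi}}$) that
\[
\underbar{\ensuremath{\Omega}} \;=\; \Omega(\check{\underbar{\ensuremath{\Psi}}}) \;\cong\; Y/\mathbb{Z}E.
\]
Since $Y$ is a free abelian group and $\mathbb{Z}E$ is a subgroup of it, the quotient $Y/\mathbb{Z}E$ is a finitely generated abelian group. In particular, every subgroup of $\underbar{\ensuremath{\Omega}}$ is abelian, and so the stabilizer $\underbar{\ensuremath{\Omega}}_{x}$ of the point $x$ is abelian.

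Combining this observation with the isomorphism $R \cong \underbar{\ensuremath{\Omega}}_{x}$ from Proposition~\ref{prop- Omega relates R} gives that $R$ is abelian, which is the content of the corollary. There is essentially no obstacle here: the proposition does all the heavy lifting (the identification of $R$ with a stabilizer inside $\tilde{W}/\tilde{W}^{\circ}$), and the corollary is just the remark that this stabilizer sits inside an abelian group. The only thing one might want to double-check is that the isomorphism of Proposition~\ref{prop- Omega relates R} is truly a group isomorphism (and not just a bijection), but that is clear from its construction via the projection $\tilde{W} \twoheadrightarrow W$ restricted to $\tilde{W}_{x}$, which is a group homomorphism.
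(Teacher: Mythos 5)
Your proposal is correct and follows exactly the route the paper takes: Proposition~\ref{prop- Omega relates R} identifies $R$ with the stabilizer $\underbar{\ensuremath{\Omega}}_{x}$ inside $\underbar{\ensuremath{\Omega}}\cong Y/\mathbb{Z}E$, which is abelian as a quotient of the abelian group $Y$, so $R$ is abelian. The paper states the corollary as an immediate consequence of the proposition with exactly this reasoning left implicit, and your write-up simply makes it explicit.
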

\begin{lem}
\label{Omega X rel}$\widehat{\underbar{\ensuremath{\Omega}}^{tor}}=\mbox{coker}(\check{X}^{\sigma}\rightarrow\check{X}_{ad}^{\sigma})$. \end{lem}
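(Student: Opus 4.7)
The plan is to realise both sides of the asserted equality as the cokernel of a single map $\check{X}^{\sigma}\to(\mathbb{Z}E)^{*}$, via a canonical identification $(\mathbb{Z}E)^{*}\cong\check{X}_{ad}^{\sigma}$.

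For the first step, I apply $\mathrm{Hom}(-,\mathbb{Z})$ to the tautological short exact sequence $0\to\mathbb{Z}E\to Y\to\underbar{\ensuremath{\Omega}}\to0$. Since $Y$ is free, $\mathrm{Ext}^{1}(Y,\mathbb{Z})=0$, yielding the four-term exact sequence
\[
0\to\mathrm{Hom}(\underbar{\ensuremath{\Omega}},\mathbb{Z})\to\check{Y}\to(\mathbb{Z}E)^{*}\to\mathrm{Ext}^{1}(\underbar{\ensuremath{\Omega}},\mathbb{Z})\to0.
\]
Using $\check{Y}=\check{X}^{\sigma}$ and the canonical isomorphism $\mathrm{Ext}^{1}(A,\mathbb{Z})\cong\widehat{A^{tor}}$ for finitely generated abelian $A$ (coming from the Bockstein of $0\to\mathbb{Z}\to\mathbb{Q}\to\mathbb{Q}/\mathbb{Z}\to0$), this already reduces the lemma to producing a canonical isomorphism $\Phi\colon\check{X}_{ad}^{\sigma}\xrightarrow{\sim}(\mathbb{Z}E)^{*}$ that intertwines the natural map $\check{X}^{\sigma}\to\check{X}_{ad}^{\sigma}$ with the restriction map $\check{Y}\to(\mathbb{Z}E)^{*}$.

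I would define $\Phi(v)(\underbar{\ensuremath{\alpha}}):=\langle\alpha,v\rangle$ via the perfect integer pairing $\mathbb{Z}\Phi\times\check{X}_{ad}\to\mathbb{Z}$ between the character and cocharacter lattices of $\boldsymbol{T}_{ad}$. Injectivity of $\Phi$ is immediate from perfectness of this pairing. For surjectivity, any $\phi\in(\mathbb{Z}E)^{*}$ extends along the surjection $\mathbb{Z}\Phi\twoheadrightarrow\mathbb{Z}E$, $\alpha\mapsto\underbar{\ensuremath{\alpha}}$, to a $\mathbb{Z}$-linear $\mathbb{Z}\Phi\to\mathbb{Z}$, hence corresponds to an element $v\in\check{X}_{ad}$; its $\sigma$-invariance follows from $\underbar{\ensuremath{\sigma\alpha}}=\underbar{\ensuremath{\alpha}}$, which is part (2) of the theorem in Section \ref{Relative root system}. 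Compatibility with the map from $\check{X}^{\sigma}$ is then routine: both composites are reductions of the fundamental pairing $X\times\check{X}\to\mathbb{Z}$ via the natural maps.

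The main obstacle is well-definedness of $\Phi$: we must see that $\langle\alpha,v\rangle$ depends only on the class $\underbar{\ensuremath{\alpha}}\in Y$. Two lifts $\alpha,\alpha'\in\mathbb{Z}\Phi$ of the same $\underbar{\ensuremath{\alpha}}$ satisfy $m(\alpha-\alpha')=(1-\sigma)x$ in $X$ for some $m\geq1$ and $x\in X$. Because $\check{X}\to\check{X}_{ad}$ is in general not injective --- its kernel is the cocharacter lattice $\check{X}(Z(\boldsymbol{G})^{\circ})$ of the connected centre --- one cannot directly push $\sigma$ across the pairing of $v$ with $(1-\sigma)x$ by lifting $v$ to $\check{X}$. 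The trick is instead to choose a rational $\sigma$-invariant lift $\tilde{v}\in\check{X}^{\sigma}\otimes\mathbb{Q}$ of $v$; such a lift exists because the obstruction sits in $H^{1}(\sigma,\check{X}(Z(\boldsymbol{G})^{\circ}))$, which is finite and hence vanishes after tensoring with $\mathbb{Q}$. With $\tilde{v}$ in hand, $\langle(1-\sigma)x,\tilde{v}\rangle=\langle x,(1-\sigma^{-1})\tilde{v}\rangle=0$ since $\sigma^{-1}\tilde{v}=\tilde{v}$, so $m\langle\alpha-\alpha',v\rangle=0$ and the ambiguity is killed.
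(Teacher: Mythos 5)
Your argument follows the same homological route as the paper: apply $\mathrm{Hom}(-,\mathbb{Z})$ to the short exact sequence $0\to\mathbb{Z}E\to Y\to\underbar{\ensuremath{\Omega}}\to0$ and then identify $\mathrm{Ext}^{1}$ of a finitely generated abelian group with the Pontryagin dual of its torsion. The one substantive addition is your explicit construction and verification of the isomorphism $(\mathbb{Z}E)^{*}\cong\check{X}_{ad}^{\sigma}$ compatible with the maps from $\check{Y}=\check{X}^{\sigma}$, which the paper asserts without comment; your rational-lift argument for well-definedness is correct, though it can be shortened by observing that $\check{X}_{ad}^{\sigma}=\mathrm{Hom}(\mathbb{Z}\Phi,\mathbb{Z})^{\sigma}\cong\mathrm{Hom}\bigl((\mathbb{Z}\Phi)_{\sigma}/\mathrm{torsion},\mathbb{Z}\bigr)$ and that $(\mathbb{Z}\Phi)_{\sigma}/\mathrm{torsion}$ injects into $Y$ with image exactly $\mathbb{Z}E$.
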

\begin{proof}
Let $L$ denote the lattice $\mathbb{Z}E$. We have the short exact
sequence 
\[
0\rightarrow L\rightarrow Y\rightarrow Y/L\rightarrow0.
\]
 Applying the contravariant functor $\mbox{Hom}(-,\mathbb{Z})$ to
this, we get 
\[
\check{Y}\rightarrow\check{L}\rightarrow\mbox{Ext}{}^{1}(Y/L,\mathbb{Z})\rightarrow0.
\]
 This implies
\begin{eqnarray}
\mbox{Ext}{}^{1}(Y/L,\mathbb{Z}) & \cong & \check{L}/\mbox{im}(\check{Y})\\
 & \cong & \check{X}_{ad}^{\sigma}/\mbox{im}(\check{X}^{\sigma}).\label{lattice eq-1}
\end{eqnarray}
 Also, we have the short exact sequence 
\[
0\rightarrow\mathbb{Z}\rightarrow\mathbb{Q}\rightarrow\mathbb{Q}/\mathbb{Z}\rightarrow0.
\]
 Let $A$ be an abelian group. Applying the functor $\mbox{Hom}(A,-)$,
we get, 
\[
\mbox{Hom}(A,\mathbb{Q}/\mathbb{Z})/\mbox{im}(\mbox{Hom}(A,\mathbb{Q}))\cong\mbox{Ext}{}^{1}(A,\mathbb{Z}).
\]
 When $A$ is finitely generated, we also have that
\[
0\rightarrow\mbox{im}(\mbox{Hom}(A,\mathbb{Q}))\rightarrow\mbox{Hom}(A,\mathbb{Q}/\mathbb{Z})\rightarrow\mbox{Hom}(A^{tor},\mathbb{Q}/\mathbb{Z})\rightarrow0.
\]
 Here $A^{tor}$ denotes the torsion part of $A$. From the last sequence,
we get that, 
\begin{equation}
\widehat{A^{tor}}\cong\mbox{Ext}{}^{1}(A,\mathbb{Z}).\label{lattice eq-2}
\end{equation}
 Putting $A=Y/L$ and using equations \ref{lattice eq-1} and \ref{lattice eq-2},
we get,
\begin{eqnarray}
\widehat{(Y/L)^{tor}} & \cong & \mbox{Ext}{}^{1}(Y/L,\mathbb{Z})\\
 & \cong & \check{X}_{ad}^{\sigma}/\mbox{im}(\check{X}^{\sigma}).
\end{eqnarray}
 This completes the proof of the lemma.
\end{proof}

\section{\label{G-semisimple-unramified.}Proof of the main theorem}

We assume $\lambda$ to be unitary till the end of section \ref{G reductive unramified}.

\subsection{\label{sub: G sc}G semisimple, simply connected and unramified}

Assume $\boldsymbol{G}$ to be unramified, semisimple and simply connected.
Then we can write $\boldsymbol{G}$ as the direct product
\[
\boldsymbol{G=}\boldsymbol{H}_{1}\times\ldots\times\boldsymbol{H}_{n},
\]
 where $\boldsymbol{H}_{i}$ are semisimple, almost simple, simply
connected and unramified. Thus, it suffices to prove the result when
$\boldsymbol{G}$ is unramified, semisimple, almost simple, simply
connected.

Let $\check{P}$ be the co-weight lattice of $\boldsymbol{G}$. Then
the set of conjugacy classes of hyperspecial points of $G$ form a
principal homogenous space for the group $\check{P}^{\sigma}/\check{X}^{\sigma}$.
By lemma \ref{Omega X rel}, this group is dual to the group $\underbar{\ensuremath{\Omega}}\cong Y/\mathbb{Z}E$
(notations as in equation \ref{eq:6-touple}). Denote by $\left(,\right):(\check{P^{\sigma}}/\check{X}^{\sigma})\times\underbar{\ensuremath{\Omega}}\rightarrow\mathbb{Q}/\mathbb{Z}$
the pairing between them. By proposition \ref{prop- Omega relates R},
$R\cong\underbar{\ensuremath{\Omega}}_{x}$. So we can realize $R$
as a subgroup of $\underbar{\ensuremath{\Omega}}$. Thus we have a
pairing $\left(,\right):(\check{P^{\sigma}}/\check{X}^{\sigma})\times R\rightarrow\mathbb{Q}/\mathbb{Z}$.
For $r\in R,$ and $\omega\in\check{P}^{\sigma}/\check{X}^{\sigma}$,
let $\rho_{\omega}\left(r\right)=e^{-2\pi i\left(\omega,r\right)}$.
Then we have a natural surjection $\hat{\underbar{\ensuremath{\Omega}}}\twoheadrightarrow\hat{R}$
given by $\omega\mapsto\rho_{\omega}$. Denote by $[K_{0}]$ the conjugacy
class of a hyperspecial subgroup $K_{0}$ satisfying $\pi_{0}^{[K_{0}]}\neq0.$
Pick $K_{\omega}\in\omega\cdot[K_{0}]$ and let $f_{\omega}\in I(\lambda)^{K_{\omega}}$
be the spherical vector such that $f_{\omega}\left(1\right)=1$.
\begin{prop}
\label{intertwining coeff-1} $\mathscr{A}(r,\lambda)f_{\omega}=\rho_{\omega}(r)f_{\omega}$.\end{prop}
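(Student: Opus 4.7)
The approach closely parallels the argument already given for the split case (Proposition \ref{intertwining coeff}). First, I would reduce to $\boldsymbol{G}$ almost simple: under the product decomposition $\boldsymbol{G} = \boldsymbol{H}_1 \times \cdots \times \boldsymbol{H}_n$ with each $\boldsymbol{H}_i$ simply connected, almost simple, and unramified, all the relevant objects --- the principal series, the $R$-group, the stabilizer $\underbar{\ensuremath{\Omega}}_x$, the normalized intertwining operators, and the pairing between $\check{P}^\sigma/\check{X}^\sigma$ and $\underbar{\ensuremath{\Omega}}$ --- decompose as products, so it suffices to treat each factor.

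Next, since $r\lambda = \lambda$, the operator $\mathscr{A}(r,\lambda)$ is a self-intertwiner of $I(\lambda)$ and hence preserves the one-dimensional space $I(\lambda)^{K_\omega}$ spanned by $f_\omega$. Thus $\mathscr{A}(r,\lambda)f_\omega = c(r,\omega)\, f_\omega$ for a scalar $c(r,\omega) \in \mathbb{C}^\times$, and the cocycle relation for $\mathscr{A}$ forces $r \mapsto c(r,\omega)$ to be a character of $R$. It therefore suffices to verify $c(r,\omega) = \rho_\omega(r)$ on a generating set of $R$, and analogously on a generating set of $\check{P}^\sigma/\check{X}^\sigma$ in the $\omega$-variable.

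The main obstacle is then the explicit identification of $c(r,\omega)$. Using Proposition \ref{prop- Omega relates R} to lift $r$ to an element $w_r \ltimes y_r \in \tilde{W} = W \ltimes Y$, I would evaluate $(\mathscr{A}(r,\lambda)f_\omega)(1)$ by unpacking the definition of the intertwining integral. The dependence on $\omega$ enters because $K_\omega$ is obtained from $K_0$ by conjugation by a lift of a coweight in $\check{P}^\sigma$ representing $\omega$; consequently $f_\omega$ is a corresponding translate of $f_0$, and tracking this translate through the integral produces an exponential factor. Combining this with the identification of $\lambda$ with the point $x_\lambda$ from Section \ref{image under LLC}, together with the Gindikin--Karpelevich formula, one reads off the pairing $(\omega, r) \in \mathbb{Q}/\mathbb{Z}$. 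The final step --- matching the resulting scalar with $e^{-2\pi i (\omega, r)}$ --- is essentially the type-by-type verification enabled by Keys' lemma \cite[Section 4 lemma]{keys1982reducibility}, and is deferred to Section \ref{main prop proof}.
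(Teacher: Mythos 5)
Your proposal takes essentially the same route as the paper: the actual content is the type-by-type verification of $c_{\lambda}(\omega,r)=e^{-2\pi i(\omega,r)}$ via Keys' lemma and the cocycle relation for the normalized intertwining operators, which the paper carries out in Section~\ref{main prop proof}, and your added structural observations (one-dimensionality of $I(\lambda)^{K_{\omega}}$, so that $\mathscr{A}(r,\lambda)$ acts by a scalar, and the cocycle relation forcing $r\mapsto c(r,\omega)$ to be a character of $R$) are correct and implicit in the paper's manipulation of the coefficients $c_{\lambda}(\omega,w)$. One small caveat: your claim that one may ``analogously'' restrict to a generating set of $\check{P}^{\sigma}/\check{X}^{\sigma}$ in the $\omega$-variable is not justified by the cocycle relation, which is a cocycle in $w$ rather than in $\omega$; this is harmless, though, since the paper's case-by-case computation treats each hyperspecial vertex $\omega\in\mathcal{C}$ directly.
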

\begin{proof}
The proof again is computation like in the split case. It is worked
out in section \ref{main prop proof}.
\end{proof}
Then from the lemma \ref{lem pi K-sph }, we get that $\pi_{\rho}^{\omega\cdot[K_{0}]}\neq0$
iff $\rho=\rho_{\omega}$. The rest of the proof of the theorem in
this case is now identical to the split case.

\subsection{\label{G reductive unramified}G reductive and unramified}

Let $\tilde{\boldsymbol{G}}=\boldsymbol{Z}^{\circ}\times(\boldsymbol{G}^{der})^{sc}$
where $\boldsymbol{G}^{der}$ is the derived group of $\boldsymbol{G}$
and $(\boldsymbol{G}^{der})^{sc}$ is its simply connected cover.
There is an isogeny $\varsigma:\boldsymbol{\tilde{G}\twoheadrightarrow G}$
. Let $\boldsymbol{\tilde{T}}=\boldsymbol{Z^{\circ}}\times(\boldsymbol{T}^{der})^{sc}$.
Let $\tilde{\lambda}$ be the pull back of the character $\lambda$
of $T$ to $\tilde{T}$ . If $f\in I(\lambda),$ then $f\circ(\varsigma|_{\tilde{G}})$
is a map $\tilde{G}\rightarrow\mathbb{C}$ and it satisfies the principal
series condition. Thus, $\varsigma$ induces 
\[
\varrho:I(\lambda)\rightarrow I(\tilde{\lambda})
\]
a $\tilde{G}$ map of the principal series representations. 
\begin{lem}
\label{lem: principal surjective.}$\varrho$ is surjective. \end{lem}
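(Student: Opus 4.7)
The plan is to construct, for each $\tilde{f} \in I(\tilde{\lambda})$, a preimage $f \in I(\lambda)$ satisfying $f \circ \varsigma|_{\tilde{G}} = \tilde{f}$. The idea is to first push $\tilde{f}$ down to a well-defined function on the image $\varsigma(\tilde{G}) \subseteq G$, and then extend it to all of $G$ using the $B$-equivariance demanded by the principal series condition.

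The first step uses that $\ker(\varsigma)$ is a finite central subgroup of $\tilde{\boldsymbol{G}}$ contained in $\tilde{\boldsymbol{T}} \subseteq \tilde{\boldsymbol{B}}$. Since $\tilde{\lambda} = \lambda \circ \varsigma$ kills $\ker(\varsigma)$ and the modular character $\delta_{\tilde{B}}$ is trivial on any central element, the $\tilde{B}$-equivariance of $\tilde{f}$ forces $\tilde{f}(z \tilde{g}) = \tilde{f}(\tilde{g})$ for all $z \in \ker(\varsigma)$. Hence the recipe $f_{0}(\varsigma(\tilde{g})) := \tilde{f}(\tilde{g})$ is a well-defined function on $\varsigma(\tilde{G})$.

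The main geometric input is the identity $G = B \cdot \varsigma(\tilde{G})$. I would prove this by observing that the central isogeny $\varsigma$ induces a $k$-isomorphism of flag varieties $\tilde{\boldsymbol{G}}/\tilde{\boldsymbol{B}} \cong \boldsymbol{G}/\boldsymbol{B}$ (because $\ker(\varsigma) \subseteq \tilde{\boldsymbol{B}}$), and invoking the Borel--Tits conjugacy theorem for rational parabolic subgroups: for any connected reductive $\boldsymbol{H}$ over $k$ and any $k$-parabolic $\boldsymbol{P}$, the map $H(k) \to (\boldsymbol{H}/\boldsymbol{P})(k)$ is surjective. Applied to both $(\tilde{\boldsymbol{G}}, \tilde{\boldsymbol{B}})$ and $(\boldsymbol{G}, \boldsymbol{B})$ and combined with the flag-variety isomorphism, this forces $\varsigma(\tilde{G})$ to surject onto the coset space $B \backslash G$.

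With $G = B \cdot \varsigma(\tilde{G})$ in hand, I would set $f(b \cdot \varsigma(\tilde{g})) := \delta^{1/2}(b) \lambda(b) \tilde{f}(\tilde{g})$. To verify well-definedness: if $b_{1} \varsigma(\tilde{g}_{1}) = b_{2} \varsigma(\tilde{g}_{2})$, then $\tilde{b} := \tilde{g}_{2} \tilde{g}_{1}^{-1}$ lies in $\tilde{B} = \varsigma^{-1}(B)$ because $\varsigma(\tilde{b}) = b_{2}^{-1} b_{1} \in B$, and the two candidate values of $f(g)$ then agree by the $\tilde{B}$-equivariance of $\tilde{f}$ together with the compatibilities $\tilde{\lambda} = \lambda \circ \varsigma$ on $\tilde{T}$ and $\delta_{\tilde{B}} = \delta_{B} \circ \varsigma$ on $\tilde{B}$ (the latter holding for any central isogeny). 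The resulting $f$ is $B$-equivariant with the correct twist, and inherits smoothness from $\tilde{f}$ because $\varsigma$ is an open map, so the image of a compact open subgroup of $\tilde{G}$ right-fixing $\tilde{f}$ is a compact open subgroup of $G$ right-fixing $f$. Hence $f \in I(\lambda)$ and $\varrho(f) = \tilde{f}$. I expect the main obstacle to be establishing the geometric identity $G = B \cdot \varsigma(\tilde{G})$; the remaining verifications are formal bookkeeping.
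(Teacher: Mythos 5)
Your proof is correct, but it takes a genuinely different route from the paper's. You construct the preimage $f$ of $\tilde{f}$ directly: descend $\tilde{f}$ to $\varsigma(\tilde{G})$ using that $\tilde{\lambda}$ kills $\ker(\varsigma)$, establish $G=B\cdot\varsigma(\tilde{G})$ via the $k$-isomorphism of flag varieties and the Borel--Tits surjectivity of $H(k)\to(\boldsymbol{H}/\boldsymbol{P})(k)$, and then extend by the $B$-equivariance forced by the principal series condition, checking well-definedness from $\tilde{B}=\varsigma^{-1}(B)$ and the compatibilities $\tilde{\lambda}=\lambda\circ\varsigma$, $\delta_{\tilde{B}}=\delta_{B}\circ\varsigma$. (Your smoothness argument implicitly uses that a central isogeny in characteristic zero is \'{e}tale, so $\varsigma$ is open on $k$-points; worth stating.) The paper instead argues representation-theoretically: since $\lambda$ is unitary, $I(\tilde{\lambda})$ is completely reducible, so $I(\tilde{\lambda})=\overline{I(\lambda)}\oplus W$ for some $\tilde{G}$-complement $W$; if $W\neq 0$, Keys's theorem supplies a hyperspecial $\tilde{K}\leq\tilde{G}$ with $W^{\tilde{K}}\neq 0$, while choosing a hyperspecial $K\leq G$ containing $\varsigma(\tilde{K})$ and mapping a nonzero $K$-fixed vector of $I(\lambda)$ through $\varrho$ shows $\overline{I(\lambda)}^{\tilde{K}}\neq 0$ too, contradicting $\dim I(\tilde{\lambda})^{\tilde{K}}=1$. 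The two approaches buy different things: your direct construction is independent of unitarity and of Keys's sphericity theorem, relying only on rational-point surjectivity for flag varieties; the paper's argument is shorter and stays entirely within the spherical-vector machinery it has already set up, at the cost of being confined to the unitary case (which is the standing hypothesis there anyway).
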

\begin{proof}
Let $\bar{f}$ (resp $\bar{g}$) denote the image of an element $f\in I(\tilde{\lambda})$
(resp $g\in\tilde{G})$ under $\varrho$ (resp $\varsigma$). Let
$W$ be a represention of $\tilde{G}$ such that

\[
I(\tilde{\lambda})=\overline{I(\lambda)}\oplus W.
\]
If $W$ is non-trivial, then there exists a hyperspecial subgroup
$\tilde{K}$ of $\tilde{G}$ such that $W^{\tilde{K}}\neq0$ \cite[Sec. 4, Theorem]{keys1982reducibility}.
Let $K$ be a hyperspecial subgroup of $G$ such that $K\supset\bar{\tilde{K}}$,
where $\bar{\tilde{K}}=\varrho(\tilde{K})$. Then $I(\lambda)^{K}\neq0$
implies $\overline{I(\lambda)}^{\tilde{K}}\neq0.$ But dim $I(\tilde{\lambda})^{\tilde{K}}=1$,
a contradiction. This implies $W=0$ and therefore the claim. \end{proof}
\begin{lem}
\label{lemma reduct omega }There is a natural inclusion
\[
\Omega_{G}^{tor}\hookrightarrow\Omega_{\tilde{G}}^{tor}.
\]
\end{lem}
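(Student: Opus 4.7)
The plan is to reduce to a statement about cokernels via Lemma \ref{Omega X rel} and then invoke Pontryagin duality. By that lemma, applied separately to $\boldsymbol{G}$ and to $\tilde{\boldsymbol{G}}$, the character groups $\widehat{\Omega_{G}^{tor}}$ and $\widehat{\Omega_{\tilde{G}}^{tor}}$ identify respectively with
\[
\mbox{coker}(\check{X}(\boldsymbol{T})^{\sigma}\rightarrow\check{X}(\boldsymbol{T}_{ad})^{\sigma})\quad\text{and}\quad\mbox{coker}(\check{X}(\tilde{\boldsymbol{T}})^{\sigma}\rightarrow\check{X}(\tilde{\boldsymbol{T}}_{ad})^{\sigma}).
\]
It therefore suffices to build a natural surjection $\widehat{\Omega_{\tilde{G}}^{tor}}\twoheadrightarrow\widehat{\Omega_{G}^{tor}}$; Pontryagin duality of finite abelian groups will then dualize this to the desired inclusion $\Omega_{G}^{tor}\hookrightarrow\Omega_{\tilde{G}}^{tor}$.

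I would first verify that $\tilde{\boldsymbol{G}}_{ad}=\boldsymbol{G}_{ad}$ as $\sigma$-groups. The factor $\boldsymbol{Z}^{\circ}$ of $\tilde{\boldsymbol{G}}$ is central and drops out of the adjoint quotient, while the adjoint group of $(\boldsymbol{G}^{der})^{sc}$ is canonically identified with $\boldsymbol{G}_{ad}$ via the isogeny $\varsigma$. The identification is $\sigma$-equivariant because $\varsigma$ is defined over $k$. Consequently $\check{X}(\tilde{\boldsymbol{T}}_{ad})^{\sigma}=\check{X}(\boldsymbol{T}_{ad})^{\sigma}$, and the two cokernels live with the same target.

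Next, the isogeny $\varsigma\colon\tilde{\boldsymbol{T}}\twoheadrightarrow\boldsymbol{T}$ has finite kernel, so dualizing yields a $\sigma$-equivariant injection $\check{X}(\tilde{\boldsymbol{T}})\hookrightarrow\check{X}(\boldsymbol{T})$ (with finite cokernel, Cartier dual to $\ker\varsigma$). Taking $\sigma$-invariants preserves injectivity, and composing with the natural map $\check{X}(\boldsymbol{T})^{\sigma}\rightarrow\check{X}(\boldsymbol{T}_{ad})^{\sigma}$ places the image of $\check{X}(\tilde{\boldsymbol{T}})^{\sigma}$ in $\check{X}(\boldsymbol{T}_{ad})^{\sigma}$ inside the image of $\check{X}(\boldsymbol{T})^{\sigma}$. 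This produces the required natural surjection of cokernels, which is exactly $\widehat{\Omega_{\tilde{G}}^{tor}}\twoheadrightarrow\widehat{\Omega_{G}^{tor}}$, and Pontryagin dualizing completes the argument.

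The only delicate point is the bookkeeping around the $\sigma$-action: one must verify that the isogeny, the identification of adjoint groups, and Lemma \ref{Omega X rel} itself all respect the Frobenius action, so that passage to $\sigma$-invariants is consistent on both sides. Beyond this, the argument is formal manipulation of short exact sequences of finitely generated abelian groups.
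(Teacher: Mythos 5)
Your argument is correct, but it takes a genuinely different route from the paper's. The paper works directly on the lattice side: it uses $\Omega_{G}^{tor}\cong(Y/\mathbb{Z}E)^{tor}$, introduces the sublattice $Y_{\circ}\subset Y$ orthogonal to the relative roots $S$, identifies $\Omega_{G^{der}}^{tor}\cong Y/(Y_{\circ}+\mathbb{Z}E)$, and proves by an explicit element-chase (using $Y_{\circ}\cap\mathbb{Z}E=0$) that the induced map $\Omega_{G}^{tor}\rightarrow\Omega_{G^{der}}^{tor}$ is injective, before appealing to the further inclusion $\Omega_{G^{der}}^{tor}\hookrightarrow\Omega_{(G^{der})^{sc}}^{tor}$, which is asserted rather than proved. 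You instead dualize through Lemma \ref{Omega X rel}, reducing the inclusion to a surjection of cokernels $\mbox{coker}(\check{X}(\tilde{\boldsymbol{T}})^{\sigma}\rightarrow\check{X}(\boldsymbol{T}_{ad})^{\sigma})\twoheadrightarrow\mbox{coker}(\check{X}(\boldsymbol{T})^{\sigma}\rightarrow\check{X}(\boldsymbol{T}_{ad})^{\sigma})$, which falls out once one checks $\tilde{\boldsymbol{G}}_{ad}=\boldsymbol{G}_{ad}$ (with compatible maximal tori) and that the isogeny $\varsigma$ gives a $\sigma$-equivariant injection $\check{X}(\tilde{\boldsymbol{T}})\hookrightarrow\check{X}(\boldsymbol{T})$. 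Your approach is cleaner in that it handles the descent to $\boldsymbol{G}^{der}$ and the lift to $(\boldsymbol{G}^{der})^{sc}$ in one uniform step, and it produces a natural inclusion by pure Pontryagin duality of finite abelian groups; the cost is the extra bookkeeping you flag (the identification of adjoint groups and the $\sigma$-equivariance), which is routine. The paper's proof buys a more concrete picture of where the inclusion lives inside $Y/\mathbb{Z}E$, at the cost of leaving the final $\Omega_{G^{der}}^{tor}\hookrightarrow\Omega_{\tilde{G}}^{tor}$ step unexamined.
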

\begin{proof}
Let the notations be as in equation \ref{eq:6-touple}. Let $X_{\circ}$
(respectively $Y_{\circ})$ be the subgroup of $X$ (respectively
$Y$) orthogonal to $\Phi$ (respectively $S$). Then $Y/Y_{\circ}\cong(X/X_{\circ})_{\sigma}/torsion$.
We have, 
\[
\Omega_{G}^{tor}\cong(Y/\mathbb{Z}E)^{tor}
\]
 and
\[
\Omega_{G^{der}}^{tor}\cong\left((Y/Y_{\circ})/\mathbb{Z}E\right)^{tor}\cong Y/(Y_{\circ}+\mathbb{Z}E).
\]
 \cite[2.15 (a)]{Springer1979}. The surjection $Y/\mathbb{Z}E\twoheadrightarrow(Y/Y_{\circ})/\mathbb{Z}E$
induces a map $\Omega_{G}^{tor}\rightarrow\Omega_{G^{der}}^{tor}$.
Let $\chi\in(Y_{\circ}+\mathbb{Z}E)$ such that $n\chi\in\mathbb{Z}E$.
Write $\chi=\chi_{\circ}+q$ where $\chi_{\circ}\in Y_{\circ}$ and
$q\in\mathbb{Z}E$. Then $n\chi\in\mathbb{Z}E$ implies $n\chi_{\circ}\in Y_{\circ}\cap\mathbb{Z}E=0$
\cite[lemma 1.2]{Springer1979}. This shows that the map $\Omega_{G}^{tor}\rightarrow\Omega_{G^{der}}^{tor}$
is injective. Since $\Omega_{G^{der}}^{tor}\hookrightarrow\Omega_{(G^{der})^{sc}}^{tor}$,
it follows that there is a natural inclusion 
\[
\Omega_{G}^{tor}\hookrightarrow\Omega_{\tilde{G}}^{tor}.
\]

\end{proof}
We then have, 

\begin{equation}
\xymatrix{\Omega_{G}^{tor}\ar@{^{(}->}[r] & \Omega_{\tilde{G}}^{tor}\\
\Omega_{G,x}^{tor}\ar@{^{(}->}[u]\ar@{^{(}->}[r] & \Omega_{\tilde{G},x}^{tor}.\ar@{^{(}->}[u]
}
\end{equation}
 It follows that

\begin{equation}
\xymatrix{\widehat{\Omega_{\tilde{G}}^{tor}}\ar@{->>}[r]\ar@{->>}[d] & \widehat{\Omega_{G}^{tor}}\ar@{->>}[d]\\
\widehat{\Omega_{\tilde{G},x}^{tor}}\ar@{->>}[r] & \widehat{\Omega_{G,x}^{tor}}.
}
\label{R-Omega commute diag}
\end{equation}

By lemma \ref{Omega X rel}, the conjugacy classes of hyperspecial
subgroups of $G$ (resp $\tilde{G}$) is a principal homogenous space
for $\widehat{\Omega_{G}^{tor}}$ (resp $\widehat{\Omega_{\tilde{G}}^{tor}}$).
Let $\tilde{R}=R(\tilde{\boldsymbol{G}},\tilde{\lambda}).$ Then by
proposition \ref{prop- Omega relates R}, $\Omega_{\tilde{G},x}^{tor}\cong\tilde{R}$
and $\Omega_{G,x}^{tor}\cong R.$ (note that $\Omega_{\tilde{G},x}^{tor}=\Omega_{\tilde{G},x}$
and $\Omega_{G,x}^{tor}=\Omega_{G,x}$) Let $\mu\mapsto\rho$ under
$\hat{\tilde{R}}\twoheadrightarrow\hat{R}$ and $\tilde{\omega}\mapsto\omega$
under $\widehat{\Omega_{\tilde{G}}^{tor}}\twoheadrightarrow\widehat{\Omega_{G}^{tor}}$.
Then from the diagram \ref{R-Omega commute diag}, it follows that
$\tilde{\omega}\cdot\mu\mapsto\omega\cdot\rho$. Using lemma \ref{lem: principal surjective.},
we get the diagram 

\begin{equation}
\xymatrix{I(\lambda)\ar@{->>}[r] & I(\tilde{\lambda})\\
\pi_{\rho}\ar@{->>}[r]\ar@{^{(}->}[u] & \underset{\mu\mapsto\rho}{\oplus}\tau_{\mu}.\ar@{^{(}->}[u]
}
\end{equation}

Let $\pi_{\rho}$ be an irreducible component of $I(\lambda)$. Then
it follows from \cite[Sec 4, Theorem]{keys1982reducibility}, that
there exits a hyperspecial vertex $h$ such that $\bar{\pi}_{\rho}^{\tilde{G}_{h}}\neq0.$
Choose $0\neq f\in I(\lambda)^{G_{h}}$ such that $f(1)=1.$ Then
$0\neq\bar{f}\in\bar{\pi}_{\rho}^{\tilde{G}_{h}}$. This implies $f\in\pi_{\rho}$
and therefore $\pi_{\rho}^{G_{h}}\neq0.$ Thus, $\pi_{\rho}^{[G_{h}]}\neq0$
iff $\bar{\pi}_{\rho}^{[\tilde{G}_{h}]}\neq0$. Equivalently,
\begin{equation}
\pi_{\omega\cdot\rho}^{\omega\cdot[G_{h}]}\neq0\mbox{ iff }\bar{\pi}_{\omega\cdot\rho}^{\tilde{\omega}\cdot[\tilde{G}_{h}]}\neq0.\label{eq: G_h 1}
\end{equation}
 $ $Also, from the statement of our theorem proved for the semisimple,
simply connected and unramified groups in section \ref{sub: G sc},
it follows that
\begin{equation}
\bar{\pi}_{\omega\cdot\rho}^{\tilde{\omega}\cdot[\tilde{G}_{h}]}\neq0\mbox{ iff }\bar{\pi}_{\rho}^{[\tilde{G}_{h}]}\neq0.\label{eq: G_h 2}
\end{equation}
From equations \ref{eq: G_h 1} and \ref{eq: G_h 2}, it follows that
for any hyperspecial subgroup $K$ of $G$, $\pi_{\rho}^{[K]}\neq0$
iff $\pi_{\omega\cdot\rho}^{\omega\cdot[K]}\neq0$.

\subsection{\label{non unitary case}Non-unitary case}

\subsubsection{Construction of the L-packet}

The following construction is given in \cite{Shahidi2011}.

Let $\varphi=\varphi_{\lambda}$ be the Langlands parameter attached
to the character $\lambda$. The parameter $\varphi$ determines a
commuting pair $\varphi_{\circ}$ and $\varphi_{+}$ such that $\varphi_{\circ}$
is tempered and 
\[
\varphi(w)=\varphi_{\circ}(w)\varphi_{+}(w)
\]
 for all $w\in W_{k}^{\prime}$ where $W_{k}^{\prime}$ is the Weil-Deligne
group. 

Let $\mu\colon T\rightarrow\mathbb{C}^{\times}$ be the character
attached to $\varphi_{\circ}$ by Langlands. Similarly let $\nu\colon T\rightarrow\mathbb{C}^{\times}$
be the character attached to $\varphi_{+}$. Let $\varpi$ be a uniformizer
in $k$. The simple roots (notations as in equation \ref{eq:6-touple})
\[
E^{\prime}=\{\alpha\in E:|\nu(\check{\alpha}(\varpi)|=1\}
\]
 generate a Levi subgroup $\boldsymbol{M}$ of $\boldsymbol{G}$ whose
dual group $\boldsymbol{\hat{M}}$ contains Im$(\varphi)$ in $\boldsymbol{\hat{G}}$.
Also, $R_{\varphi_{\circ}}(\boldsymbol{M})\cong R_{\varphi}(\boldsymbol{G})$.

Let $\boldsymbol{P=MN}$ be the parabolic subgroup of $\boldsymbol{G}$
with $\boldsymbol{M}$ as a Levi subgroup and $\boldsymbol{N}\subset\boldsymbol{U}$
where $\boldsymbol{U}$ is the unipotent radical of $\boldsymbol{B}$.
Let $\boldsymbol{U_{M}=U\cap M}$. Then 
\[
\tau=\mbox{Ind}_{TU_{M}}^{M}\mu
\]
 is a tempered representation of $M$ which may not be irreducible.
Write 
\[
\tau=\bigoplus_{i=1}^{n}\tau_{i},
\]
 where $\tau_{i}$ are irreducible. By replacing $\nu$ , $\tau_{i}$
and $\boldsymbol{M}$ with a $W(\boldsymbol{G,A})$ conjugate, we
may assume that 
\[
I(\nu,\tau_{i})=\mbox{Ind}_{P}^{G}\tau_{i}\otimes\nu
\]
 is in the Langlands setting. The elements of the $L$-packet are
then the unique Langlands quotients $J(\nu,\tau_{i})$ of each $I(\nu,\tau_{i})$
$1\leq i\leq n$. i.e., 
\[
\boldsymbol{\Pi}(\varphi_{\lambda})=\{J(\nu,\tau_{i}):1\leq i\leq n\}.
\]

\begin{lem}
There is a natural embedding $\Omega_{M}^{tor}\hookrightarrow\Omega_{G}^{tor}.$\end{lem}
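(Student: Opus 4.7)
The plan is to mimic the proof of Lemma \ref{lemma reduct omega}, identifying $\Omega_M^{tor}$ and $\Omega_G^{tor}$ via Yu's 6-tuple construction and exploiting the inclusion $\mathbb{Z}E' \subseteq \mathbb{Z}E$ directly. Applying the construction of equation \ref{eq:6-touple} to the Levi $\boldsymbol{M}$ (which shares the maximal torus $\boldsymbol{T}$ with $\boldsymbol{G}$, and whose absolute simple roots are the $\sigma$-stable subset of $\Delta$ lying above $E'$) yields the relative based root datum of $\boldsymbol{M}$ with cocharacter lattice $Y$ and simple relative roots precisely $E'$. Consequently $\Omega_M \cong Y/\mathbb{Z}E'$, while $\Omega_G \cong Y/\mathbb{Z}E$.

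Since $\mathbb{Z}E' \subseteq \mathbb{Z}E$, there is a surjective homomorphism $\pi\colon Y/\mathbb{Z}E' \twoheadrightarrow Y/\mathbb{Z}E$ with kernel $\mathbb{Z}E/\mathbb{Z}E'$, and $\pi$ restricts to a map $\pi^{tor}\colon \Omega_M^{tor} \to \Omega_G^{tor}$. The key observation is that the simple roots $E$ are $\mathbb{R}$-linearly independent in $Y \otimes \mathbb{R}$ (as simple roots of the based root datum $\underbar{\ensuremath{\Psi}}$), so $\mathbb{Z}E$ is free with $\mathbb{Z}$-basis $E$, and the quotient $\mathbb{Z}E/\mathbb{Z}E'$ is free on $E \setminus E'$, in particular torsion-free.

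It follows at once that $\ker(\pi^{tor}) \subseteq \ker(\pi) = \mathbb{Z}E/\mathbb{Z}E'$ is a torsion subgroup of a torsion-free group, hence trivial, giving the desired embedding. Concretely, if $y \in \mathbb{Z}E$ satisfies $ny \in \mathbb{Z}E'$ for some $n \geq 1$, expanding $y = \sum_{\alpha \in E} c_\alpha \alpha$ and using linear independence of $E$ forces $c_\alpha = 0$ for all $\alpha \in E \setminus E'$, so $y \in \mathbb{Z}E'$. The main delicate point is the identification $\Omega_M \cong Y/\mathbb{Z}E'$: one must check that Yu's construction applied to $\boldsymbol{M}$ genuinely produces simple relative roots equal to $E'$, which reduces to verifying that the preimage of $E'$ in $\Delta$ is $\sigma$-stable (immediate from the definition of $E$ as $\sigma$-orbit representatives). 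Once this is in hand, everything else is routine lattice arithmetic directly parallel to Lemma \ref{lemma reduct omega}.
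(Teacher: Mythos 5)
Your proof is correct and follows essentially the same route as the paper: identify $\Omega_M \cong Y/\mathbb{Z}E'$ and $\Omega_G \cong Y/\mathbb{Z}E$, take the natural surjection between them, and show its restriction to torsion is injective by exploiting the $\mathbb{R}$-linear independence of $E$. Where the paper carries out the coordinate computation (writing $\chi = \sum a_i\alpha_i$ and concluding $a_i = 0$ for $i \in I\setminus J$), you simply observe that the kernel $\mathbb{Z}E/\mathbb{Z}E'$ is free on $E\setminus E'$, hence torsion-free — the same idea in cleaner form.
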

\begin{proof}
We have $\Omega_{M}\cong Y/\mathbb{Z}E^{\prime}$ and $\Omega_{G}\cong Y/\mathbb{Z}E$.
So we have a natural surjection $Pr:\Omega_{M}\twoheadrightarrow\Omega_{G}$.
Let $\bar{\chi}\in\Omega_{M}^{tor}$ such that $Pr(\bar{\chi})=0$.
Then we can choose a representative $\chi\in\mathbb{Z}E$ of $\bar{\chi}$
such that $n\chi\in\mathbb{Z}E^{\prime}$ for some integer $n$. Write
\[
\chi=\sum_{i\in I}a_{i}\alpha_{i},
\]

where $a_{i}\in\mathbb{Z}$, $\alpha_{i}\in E$ and $I$ is an indexing
set for $E$. Write 
\[
n\chi=\sum_{j\in J}b_{j}\alpha_{j},
\]

where $J\subset I$ is an indexing set for $E^{\prime}$. Then 
\[
n\sum_{i\in I\smallsetminus J}a_{i}\alpha_{i}+\sum_{j\in J}(na_{j}-b_{j})\alpha_{j}=0.
\]

This implies $a_{i}=0$ for $i\in I\smallsetminus J$ and therefor
$\chi\in\mathbb{Z}E^{\prime}$, i.e., $\bar{\chi}=0$. Thus, the projection
$Pr$ induces an embedding $\Omega_{M}^{tor}\hookrightarrow\Omega_{G}^{tor}$.
\end{proof}
By the lemma, we have a surjection $\mathscr{L}\colon\widehat{\Omega_{G}^{tor}}\twoheadrightarrow\widehat{\Omega_{M}^{tor}}$.
By proposition \ref{prop- Omega relates R}, there is a natural surjection
$\widehat{\Omega_{M}^{tor}}\twoheadrightarrow\widehat{R_{\varphi_{\circ}}(\boldsymbol{M})}$.
Therefore, we have a natural surjection $\widehat{\Omega_{G}^{tor}}\twoheadrightarrow\hat{R}_{\varphi}$
from the diagram 
\begin{equation}
\xymatrix{\widehat{\Omega_{G}^{tor}}\ar@{->>}[r]\ar@{->>}[d] & \widehat{\Omega_{M}^{tor}}\ar@{->>}[d]\\
\hat{R}_{\varphi}\ar@{->}[r]^{\cong} & \widehat{R_{\varphi_{\circ}}(\boldsymbol{M})}.
}
\label{Omega diag}
\end{equation}

Let $K_{M}$ be a hyperspecial subgroup of $M$ such that $\tau_{\rho}^{K_{M}}\neq0$,
$\rho\in\hat{R}_{\varphi}$. Then there is a hyperspecial subgroup
$K_{G}$ of $G$ containing some conjugate of $K_{M}$. Withiout loss
of generality, $K_{G}\supset K_{M}$.
\begin{claim}
$I(\nu,\tau_{\rho})^{K_{G}}\neq0.$ \end{claim}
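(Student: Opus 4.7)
The plan is to construct an explicit nonzero $K_G$-fixed vector in $I(\nu,\tau_\rho)=\mathrm{Ind}_P^G(\tau_\rho\otimes\nu)$ (normalized induction) by lifting a $K_M$-fixed vector of $\tau_\rho$ via the Iwasawa decomposition $G=PK_G$.

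First I would fix $v_0\neq 0$ in $\tau_\rho^{K_M}$, which exists by hypothesis. The candidate function is forced by the transformation law of normalized induction together with the requested $K_G$-invariance: if $f\in I(\nu,\tau_\rho)^{K_G}$ and $g=mnk$ with $m\in M$, $n\in N$, $k\in K_G$, then
\[
f(g)\;=\;\delta_P(m)^{1/2}\,\nu(m)\,\tau_\rho(m)\,v_0.
\]
I take this formula as the \emph{definition} of $f$, and the entire content of the claim becomes the well-definedness of $f$ on $G$.

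Well-definedness reduces to the following statement: for every $q\in K_G\cap P$, writing $q=m_q n_q$ with $m_q\in M$, $n_q\in N$, one has $\delta_P(m_q)^{1/2}\nu(m_q)\tau_\rho(m_q)v_0=v_0$. I would establish this by combining three ingredients. (i) The parabolic factorization $K_G\cap P=(K_G\cap M)(K_G\cap N)$, a standard building-theoretic fact about the intersection of a hyperspecial subgroup with a standard parabolic whose Levi contains $\boldsymbol{T}$; this forces $m_q\in K_G\cap M$. (ii) Since $K_M$ is hyperspecial in $M$, hence a maximal compact subgroup of $M$, while $K_G\cap M$ is a compact subgroup of $M$ containing $K_M$, maximality gives $K_G\cap M=K_M$, so $m_q\in K_M$. (iii) The character $\delta_P$ is positive real-valued and $\nu$ is unramified, so both are trivial on the compact group $K_M$; and $\tau_\rho(m_q)v_0=v_0$ because $v_0$ is $K_M$-fixed. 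Multiplying these gives the required identity.

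Once well-definedness holds, the remaining checks are immediate: $f$ is smooth, is $P$-equivariant by construction, and is right $K_G$-invariant because the $K_G$-coordinate of the Iwasawa decomposition does not enter the defining formula. Since $f(1)=v_0\neq 0$, this furnishes a nonzero element of $I(\nu,\tau_\rho)^{K_G}$. The only nonformal ingredient is the parabolic factorization of $K_G\cap P$, which is classical; so the genuine obstacle is simply the careful bookkeeping that $m_q$ lands in $K_M$ rather than merely in $K_G\cap M$, where maximality of hyperspecial subgroups of $M$ is what closes the argument.
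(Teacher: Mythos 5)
Your proof is correct and takes essentially the same route as the paper: define the section of $I(\nu,\tau_\rho)$ by the explicit formula $f(mnk)=\delta_P^{1/2}(m)\nu(m)\tau_\rho(m)v_0$ via the Iwasawa decomposition $G=PK_G$, starting from a nonzero $K_M$-fixed vector $v_0$ of $\tau_\rho$. You additionally carry out the well-definedness check (using the factorization $K_G\cap P=(K_G\cap M)(K_G\cap N)$, the maximality of the hyperspecial $K_M$ to conclude $K_G\cap M=K_M$, and the triviality of $\delta_P$ and of the unramified $\nu$ on $K_M$), which the paper asserts but leaves implicit.
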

\begin{proof}
Denote by $U_{\rho}$ the space realizing $\tau_{\rho}$ and let $u$
be a $K_{M}$ fixed vector in $U_{\rho}$. Let $f:\boldsymbol{G}\rightarrow U_{\rho}$
be the function defined by 
\[
f(mnk)=\delta^{1/2}(m)\tau_{\rho}(m)u,
\]
 where $m\in M$, $n\in N$ and $k\in K_{G}$. Then $f$ is well defined
and is fixed by $K_{G}$. \end{proof}
\begin{lem}
\label{lemm J}$J(\nu,\tau_{\rho})^{K_{G}}\neq0.$ \end{lem}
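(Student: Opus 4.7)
The plan is to identify the Langlands quotient $J(\nu,\tau_{\rho})$ with the unique unramified irreducible subquotient $\tau_{K_{G},\lambda}$ of $I(\lambda)$, which automatically carries a $K_{G}$-fixed vector.

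First I would use induction in stages to write $I(\lambda)=\bigoplus_{i=1}^{n}I(\nu,\tau_{i})$ and invoke the Iwasawa decomposition $G=BK_{G}$ to conclude $\dim I(\lambda)^{K_{G}}=1$. The preceding claim produces an explicit $K_{G}$-fixed function $f$ in $I(\nu,\tau_{\rho})$; together these force the entire one-dimensional space $I(\lambda)^{K_{G}}$ to lie inside the single summand $I(\nu,\tau_{\rho})$, with $f$ as a basis.

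Next I would apply the exact functor of $K_{G}$-invariants (exact because $K_{G}$ is compact open) to the defining short exact sequence
\[
0\to W\to I(\nu,\tau_{\rho})\to J(\nu,\tau_{\rho})\to 0.
\]
This yields the exact sequence $0\to W^{K_{G}}\to I(\nu,\tau_{\rho})^{K_{G}}\to J(\nu,\tau_{\rho})^{K_{G}}\to 0$. Since $\dim I(\nu,\tau_{\rho})^{K_{G}}=1$, the conclusion $J(\nu,\tau_{\rho})^{K_{G}}\neq 0$ is equivalent to $f\notin W$. If on the contrary $f\in W$, then $W$ would contain a nonzero $K_{G}$-fixed vector, hence an irreducible unramified subquotient of $I(\lambda)$; but by the classical uniqueness of the unramified subquotient this must be $\tau_{K_{G},\lambda}$, contradicting the fact that in Langlands setting $J(\nu,\tau_{\rho})$ itself is the unramified subquotient.

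To close the argument I would verify the last identification explicitly via the standard intertwining operator $A(w_{P},\nu):I(\nu,\tau_{\rho})\to I(w_{P}\nu,w_{P}\tau_{\rho})$ (with $w_{P}$ the longest element of $W^{M}\backslash W$), whose image realizes the Langlands quotient. A Gindikin-Karpelevich-type computation shows that $A(w_{P},\nu)f$ is a nonzero scalar multiple of the spherical vector in the target parabolically induced representation, so $f$ has nonzero image in $J(\nu,\tau_{\rho})$.

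The main obstacle is this final Gindikin-Karpelevich computation in the nontempered Langlands-setting regime, which requires controlling the normalization factors (arising from Harish-Chandra $c$-functions or equivalently products of local $L$-factors) to ensure that the intertwining operator does not vanish on the spherical vector. Equivalently, the essential input is the Casselman-Borel identification of $\tau_{K_{G},\lambda}$ with the Langlands quotient of $I(\nu,\tau_{\rho})$, which is already implicit in the matching between Satake parameters and Langlands parameters used to define the $L$-packet $\boldsymbol{\Pi}(\varphi_{\lambda})$.
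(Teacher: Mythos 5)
Your proposal is correct and follows essentially the same route as the paper: in both, the crux is that $J(\nu,\tau_{\rho})$ is realized as the image of a standard intertwining operator and that, by the Gindikin--Karpelevich computation, this operator does not annihilate the $K_{G}$-spherical vector $f$, which is precisely what the paper invokes when it notes that the image of the $K_{G}$-spherical vector in $I(\lambda)$ under $A(\lambda,w)$ is nonzero. Your middle paragraph (the contradiction via "$J(\nu,\tau_{\rho})$ is the unramified subquotient") is logically circular as stated -- that identification is equivalent to the lemma being proved -- but it is rendered harmless by your final paragraph, where the explicit non-vanishing of $A(w_{P},\nu)f$ supplies the actual argument, matching the paper's.
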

\begin{proof}
The Langlangs quotient $J(\nu,\tau_{\rho})$ is the image of an intertwining
operator 
\[
A(\nu,\tau_{\rho,}w)\colon I(\nu,\tau_{\rho})\rightarrow I(w\nu,w\tau_{\rho}),
\]
 for some $w\in W$. Then $J(\nu,\tau_{\rho})^{K_{G}}\neq0$ follows
from the fact that the image of a $K_{G}$ spherical vector in $I(\lambda)$
under 
\[
A(\lambda,w)\colon I(\lambda)\rightarrow I(w\lambda)
\]
is non-zero. 
\end{proof}
Conversely, if $J(\nu,\tau_{\rho})^{K_{G}}\neq0$, then some conjugate
of $K_{G}$ contains a hyperspecial $K_{M}$ of $M$. Without loss
of generality, $K_{G}\supset K_{M}$. If $0\neq g\in I(\nu,\tau_{\rho})^{K_{G}}$,
then $0\neq g(1)\in\tau_{\rho}^{K_{M}}$.

In the notations as above, let $\pi_{\rho}=J(\nu,\tau_{\rho}).$ Let
$\omega\in\widehat{\Omega_{G}^{tor}}$ and $\bar{\omega}\in\widehat{\Omega_{M}^{tor}}$
be its image under $\mathscr{L}:\omega\mapsto\bar{\omega}.$ Then
it follows from lemma \ref{lemm J} and the diagram \ref{Omega diag},

\begin{eqnarray*}
\pi_{\omega\cdot\rho}^{\omega\cdot[K_{G}]}\neq0 & \iff & \tau_{\bar{\omega}\cdot\rho}^{\bar{\omega}\cdot[K_{M}]}\neq0\\
 & \iff & \tau_{\rho}^{[K_{M}]}\neq0\\
 & \iff & \pi_{\rho}^{[K_{G}]}\neq0.
\end{eqnarray*}

This completes the proof of the main theorem.

\section{\label{main prop proof}Proof of Proposition \ref{intertwining coeff}
and \ref{intertwining coeff-1}}

\subsection{Affine Root Structure}

Let $\boldsymbol{G}$ be semisimple, almost simple, simply connected
and unramified. Let $\boldsymbol{N}=N_{\boldsymbol{G}}\boldsymbol{A}$
be the normalizer of $\boldsymbol{A}$ and let $W$ be the relative
Weyl group of $\boldsymbol{G}$. Let $S$ denote the set of relative
roots. Bruhat and Tits define \emph{affine roots} $\Sigma$ associated
to $G.$ In the split case or in the case that $\boldsymbol{G}$ is
unramified and of type $^{2}A_{2n-1}(n\geqslant3)$ or $^{2}D_{n+1}(n\geq2)$,
\[
\Sigma=\{a+n:a\in S,n\in\mathbb{Z}\}.
\]

Affine roots can be regarded as affine linear functions $x\mapsto a(x)+n$
on the vector space spanned by $S$. Let $h_{\alpha}$ be the hyperplane
on which $\alpha\in\Sigma$ vanishes. Let $s_{\alpha}$ be the reflection
in the hyperplane $h_{\alpha}$. The affine Weyl group $\tilde{W}$
is generated by the reflections $s_{\alpha}$, $\alpha\in\Sigma$.
For $\alpha\in\Sigma$, let $t_{\alpha}$ be the translation $s_{\alpha}s_{\alpha+1}$.
There exists a canonical surjection $\nu:N\rightarrow\tilde{W}$.
Let $H$ be the kernel of $\nu$. 

There exists a collection of compact unipotent subgroups $U_{\alpha}$
of $G$, $\alpha\in\Sigma,$ satisfying the following properties (see
\cite[page 27]{MR0435301}):
\begin{enumerate}
\item $xU_{\alpha}x^{-1}=U_{\nu(x)\alpha}$ for $x\in N$, $\alpha\in\Sigma$.
\item $U_{\alpha}\subsetneq U_{\alpha-1}$, $q_{\alpha}=(U_{\alpha-1}:U_{\alpha})$
is finite and $\underset{k\in\mathbb{Z}}{\bigcap}U_{\alpha+k}=\{1\}$.
\item If $\beta=-\alpha+r$, $r\gneq0$, then $<U_{\alpha},U_{\beta},H>=U_{\alpha}HU_{\beta}$
.
\item $<U_{\alpha},U_{\beta},H>=U_{\alpha}\nu^{-1}(s_{\alpha})U_{\alpha}\cup U_{\alpha}HU_{-\alpha+1}$.
\item If $h_{\alpha}$ and $h_{\beta}$ are not parallel, i.e., if $\beta\neq\pm\alpha+r$
for any $r\in\mathbb{Z}$, then the commutator group $[U_{\alpha},U_{\beta}]$
is contained in $<U_{r\alpha+s\beta}|r\alpha+s\beta\in\Sigma,\, r,s\geq0>.$ 
\item Let $S^{+}$ and $S^{-}$ denote the positive and negative roots respectively
in $S$. For each $\alpha\in S$, let $U_{(\alpha)}=\underset{r\in\mathbb{Z}}{\bigcup}U_{\alpha+r}$.
Let $U^{+}=<U_{(\alpha)}:\alpha\in S^{+}>$ and $U^{-}=<U_{(\alpha)}:\alpha\in S^{-}>$.
Then $U^{+}\cap MU^{-}=\{1\}.$
\item $G=<N,U_{\alpha}:\alpha\in\Sigma>$.\end{enumerate}
\begin{lem}
\label{keys lemma}\cite[Lemma]{Keys1982} Let $\alpha$ be a positive
simple root and let $f_{K,\lambda}\in I(\lambda)$ be the $K$-fixed
vector with $f_{K,\lambda}(1)=1$. Then $\mathscr{A}(s_{\alpha},\lambda)f_{K,\lambda}=f_{K,s_{\alpha}\lambda}$
if $U_{(\alpha)}\cap K=U_{\alpha}$ and $\mathscr{A}(s_{\alpha},\lambda)f_{K,\lambda}=\lambda(t_{\alpha})^{-1}f_{K,s_{\alpha}\lambda}$
if $U_{(\alpha)}\cap K=U_{\alpha-1}$ and $q_{\alpha/2}=1.$ 
\end{lem}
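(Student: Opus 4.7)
The plan is to identify $\mathscr{A}(s_\alpha,\lambda)f_{K,\lambda}$ as a scalar multiple of the spherical vector $f_{K,s_\alpha\lambda}$ in $I(s_\alpha\lambda)$, then compute that scalar explicitly. Because $\mathscr{A}(s_\alpha,\lambda)$ is $G$-equivariant, its image of a $K$-fixed vector is again $K$-fixed, and uniqueness of the spherical vector for the unramified principal series gives $\dim I(s_\alpha\lambda)^{K}=1$. Hence $\mathscr{A}(s_\alpha,\lambda)f_{K,\lambda}=c\cdot f_{K,s_\alpha\lambda}$ for a single constant $c$, which can be read off at $g=1$. Using the normalization $\mathscr{A}=c_{s_\alpha}(\lambda)^{-1}\boldsymbol{A}$, the task reduces to computing
\[
c_{s_\alpha}(\lambda)\cdot c \;=\; \int_{U_{(\alpha)}} \varLambda_{K,\lambda}(u\bar s_\alpha)\,du.
\]

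Next I would carry out this integral by a rank-one reduction inside the subgroup $L_\alpha\subset G$ generated by $U_{(\alpha)}$, $U_{(-\alpha)}$ and the relevant piece of $T$. Under the assumption $q_{\alpha/2}=1$, $L_\alpha$ is of $SL_2$-type, so the Iwasawa decomposition of $u\bar s_\alpha$ reduces to the classical $2\times 2$ calculation. The affine root filtration $\cdots\supsetneq U_{\alpha+1}\supsetneq U_\alpha\supsetneq U_{\alpha-1}\supsetneq\cdots$ partitions $U_{(\alpha)}$ into a compact core and annular layers $U_{\alpha-n}\setminus U_{\alpha-n+1}$ for $n\geq 1$. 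On each annular layer, the rank-one Iwasawa decomposition gives $u\bar s_\alpha=t_\alpha^{\,n}\cdot u'\cdot k$ with $u'\in U$ and $k\in K$, so $\varLambda_{K,\lambda}(u\bar s_\alpha)=\lambda(t_\alpha)^{n}\delta^{1/2}(t_\alpha)^{n}$, and the contribution sums to the usual geometric series.

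The two cases of the lemma differ only in where the compact core sits. In case~1, $U_{(\alpha)}\cap K=U_\alpha$ and $\bar s_\alpha\in K$, so the core $U_\alpha$ contributes $\int_{U_\alpha}1\,du=\mathrm{vol}(U_\alpha)$; summing the annular contributions and dividing by $c_{s_\alpha}(\lambda)$ recovers the standard Harish-Chandra computation and yields $c=1$. In case~2, $U_{(\alpha)}\cap K=U_{\alpha-1}$, so the extra layer $U_{\alpha-1}\setminus U_\alpha$ now lies inside $K$ and joins the compact core. The effect is a uniform shift by one in the filtration index, which multiplies every surviving Iwasawa $t$-value by $t_\alpha^{-1}$; the unnormalized integral therefore picks up an overall factor $\lambda(t_\alpha)^{-1}$ relative to case~1, while the normalizer $c_{s_\alpha}(\lambda)$ is unchanged, giving $c=\lambda(t_\alpha)^{-1}$.

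The main obstacle is the bookkeeping in case~2: one must fix $\bar s_\alpha$ consistently (as a representative of $s_\alpha$ in $N(T)$ whose position in the building is controlled) and then verify rigorously that shifting the compact core from $U_\alpha$ to $U_{\alpha-1}$ really does translate into the uniform twist by $t_\alpha^{-1}$ in every Iwasawa $t$-part. The hypothesis $q_{\alpha/2}=1$ is essential here: it rules out the $SU_3$-type situation in which $L_\alpha$ is no longer $SL_2$ and the rank-one Iwasawa decomposition involves the half-root $\alpha/2$, producing corrections not captured by the simple shift above.
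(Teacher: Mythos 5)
The paper does not prove this lemma: it is cited verbatim from \cite{Keys1982} (the Lemma in Section~4 of Keys' paper), so there is no ``paper's own proof'' to compare with. Your reconstruction does follow the same broad strategy Keys uses --- uniqueness of the $K$--spherical line forces $\mathscr{A}(s_\alpha,\lambda)f_{K,\lambda}$ to be a scalar multiple of $f_{K,s_\alpha\lambda}$, and the scalar is read off from the rank--one Gindikin--Karpelevich integral computed over the affine root filtration. The reduction to the $SL_2$ case and the correct role of the hypothesis $q_{\alpha/2}=1$ (ruling out the $SU_3$ rank--one situation) are both on target.

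The genuine gap is in your treatment of Case~2. The statement that ``shifting the compact core from $U_\alpha$ to $U_{\alpha-1}$ multiplies every surviving Iwasawa $t$--value by $t_\alpha^{-1}$, while the normalizer $c_{s_\alpha}(\lambda)$ is unchanged, giving $c=\lambda(t_\alpha)^{-1}$'' is asserted, not proved, and as written it is not an honest accounting. The $c$--function is indeed $K$--independent, but the unnormalized integral in Case~2 is \emph{not} literally the Case~1 integrand shifted by one layer: the spherical vector itself has changed. If $K_{1}=gK_{0}g^{-1}$ with $g$ the relevant element of $T_{ad}$, then $f_{K_1,\lambda}(x)=f_{K_0,\lambda}(g^{-1}xg)$, and substituting into the integral produces \emph{both} a twist $g^{-1}\bar s_\alpha g=\bar s_\alpha\cdot(s_\alpha(g))^{-1}g$ (which supplies the $t_\alpha^{\pm1}$) \emph{and} a Jacobian factor from the change of variable $u\mapsto g^{-1}ug$ on $U_{(\alpha)}$ (a power of $q$), \emph{and} a $\delta^{1/2}(t_\alpha)^{\pm1}$ from the right $B$--equivariance. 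These $q$--powers must cancel against the Haar--measure normalization implicit in $q_\alpha=(U_{\alpha-1}:U_\alpha)$ before one is entitled to conclude that the net effect is exactly $\lambda(t_\alpha)^{-1}$. Until that bookkeeping is done explicitly (and a consistent choice of the representative $\bar s_\alpha$ is pinned down, as you yourself note), the Case~2 conclusion is not established. Relatedly, in Case~1 you take $\bar s_\alpha\in K$ for granted; this is true for a suitable representative, but it requires the Bruhat--Tits relation $\langle U_\alpha,U_{-\alpha},H\rangle=U_\alpha\nu^{-1}(s_\alpha)U_\alpha\cup U_\alpha HU_{-\alpha+1}$ (property~(4) in the paper) together with $U_\alpha,U_{-\alpha}\subset K$, and should be said.
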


\subsection{More about the group $\Omega$ }

Let $\boldsymbol{G}$ be semisimple, almost simple, simply connected
and unramified. Let $\underbar{\ensuremath{\Psi}}=(Y,S,E,\check{Y},\check{S},\check{E})$
be the based root datum defined in section \ref{Relative root system}.
$W$ is the relative Weyl group. Let $\check{E}=\{\check{\beta_{1}},\ldots,\check{\beta_{n}}\}$
where $\check{\beta_{i}},$ $1\leq i\leq n$, are the simple relative
co-roots. Let $\check{\tilde{\beta_{0}}}=\sum_{i=1}^{n}n_{i}\check{\beta_{i}}$
be the highest co-root. Put $\check{\beta_{0}}=1-\check{\tilde{\beta_{0}}}$.
Let $h=\sum_{i=1}^{n}n_{i}+1$ be the coxeter number and put $\delta_{\circ}=\frac{1}{2}\sum_{\beta\in S^{+}}\beta$.
Recall that $\underbar{C}=C(\check{\underbar{\ensuremath{\Psi}}})$
was the alcove in the apartment $A=Y\otimes\mathbb{R}$ defined by
$\{x\in A:\check{\beta_{0}}(x)\geq0,\ldots,\check{\beta}_{n}(x)\geq0\}$.
Put $n_{0}=1$ and give the weight $n_{i}$ to the $i^{th}$ vertex
of $\underbar{C}$. Then $c_{\circ}=h^{-1}\delta_{\circ}$ is the
(weighted) barycenter of $\underbar{C}$. For any $w\in W$, let $\tilde{w}$
be the affine map $x\in A\mapsto w(x-c_{\circ})+c_{\circ}$. It is
the unique affine map fixing $c_{\circ}$ with tangent part $w$.
Denote by $Q$ the lattice generated by $S$. 
\begin{lem}
\label{jkyu lemma}\cite[Lemma 6.2]{Jiu-KangYu} For any $w\in W$,
the following are equivalent :\end{lem}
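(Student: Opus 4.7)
The statement as excerpted terminates at ``the following are equivalent'' without the list of conditions reproduced, so my plan is calibrated to the most natural equivalences available in this setup: characterizations of when the affine lift $\tilde{w}$ of a relative Weyl element $w\in W$ belongs to $\Omega=\Omega(\check{\underbar{\ensuremath{\Psi}}})$, i.e., stabilizes the alcove $\underbar{C}$. Given Yu's notation (the weighted barycenter $c_\circ=h^{-1}\delta_\circ$, the weights $n_0,\dots,n_n$ on the vertices of $\underbar{C}$, and the lattice $Q=\mathbb{Z}S$), the expected equivalences are of the form: (i) $\tilde{w}\cdot\underbar{C}=\underbar{C}$; (ii) $w$ permutes the vertices of $\underbar{C}$ preserving the weights $n_i$; (iii) $w(\delta_\circ)-\delta_\circ\in hQ$ (equivalently $w\cdot c_\circ-c_\circ\in Q$, so that the translation part of $\tilde{w}$ lies in the translation lattice of $\tilde{W}^\circ$). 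The proof is essentially a dictionary between the geometry of $\underbar{C}$, the lattice decomposition $\tilde{W}=\Omega\ltimes\tilde{W}^\circ$, and the linear action of $W$ on $\delta_\circ$.

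For (i)$\Leftrightarrow$(ii), I would use the fact that $\tilde{w}$ is an affine isometry fixing $c_\circ$: if $\tilde{w}$ stabilizes $\underbar{C}$, then it permutes the vertices, and because $c_\circ$ has barycentric coordinates proportional to $(n_0,\ldots,n_n)$, the permutation must preserve the weights. Conversely, a weight-preserving permutation of vertices of $\underbar{C}$ extends uniquely to an affine self-map fixing $c_\circ$, and this must coincide with $\tilde{w}$ because both fix $c_\circ$ and have tangent part $w$. For (i)$\Leftrightarrow$(iii), I would write $\tilde{w}(x)=w(x)+(c_\circ-w\,c_\circ)$ and recall that $\Omega=\tilde{W}/\tilde{W}^\circ$ is identified with $Y/Q$ via the translation part (modulo $W$-action); thus $\tilde{w}\in\Omega$ iff the translation defect $c_\circ-w\,c_\circ$ lies in $Q$, which unpacks via $c_\circ=h^{-1}\delta_\circ$ to the condition $w\,\delta_\circ-\delta_\circ\in hQ$.

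The main obstacle, apart from matching Yu's exact formulation of the three conditions, is keeping track of the non-reduced case: the based root datum $\underbar{\ensuremath{\Psi}}$ need not be reduced, so one must be careful whether the relevant lattice is $\mathbb{Z}S$ or $\mathbb{Z}E$ and whether ``vertex weights'' are read off from the highest root $\check{\tilde{\beta}}_0$ or from a twisted variant; the proof outlined above goes through unchanged in the reduced case, and in the non-reduced case one replaces $\check{\tilde{\beta}}_0$ by the appropriate highest affine-root datum in the Coxeter-number computation $h=\sum n_i+1$. Once the dictionary is in place, each implication reduces to a one-line affine-geometry check, and the lemma follows.
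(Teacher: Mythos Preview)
The paper does not prove this lemma at all: it is quoted verbatim from \cite[Lemma 6.2]{Jiu-KangYu} and used as a black box, so there is no ``paper's own proof'' to compare against. That said, your guessed list of equivalent conditions is off in a way that matters. The actual conditions are (1) $\tilde{w}\in W\ltimes Y$, (2) $\tilde{w}\,\underbar{C}=\underbar{C}$, (3) $\tilde{w}(\check{E}_\circ)=\check{E}_\circ$, (4) $w(\check{E}\cup\{-\check{\tilde{\beta}}_0\})=\check{E}\cup\{-\check{\tilde{\beta}}_0\}$; the lemma then goes on to describe the bijection $w\mapsto\tilde{w}(\check{\beta}_0)$ with $\{\check{\beta}_i:n_i=1\}$ and three equivalent descriptions of the isomorphism $\iota:\underbar{\Omega}\to Y/Q$.

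Your condition (iii), that $c_\circ-w\,c_\circ\in Q$, is not among these and is in fact wrong as a characterization of $\tilde{w}\in\underbar{\Omega}$. Since $\tilde{w}(x)=wx+(c_\circ-w\,c_\circ)$, requiring the translation part to lie in $Q=\mathbb{Z}S$ would force $\tilde{w}\in\tilde{W}^\circ=W\ltimes Q$; combined with $\tilde{w}\,\underbar{C}=\underbar{C}$ and simple transitivity of $\tilde{W}^\circ$ on alcoves, this gives $\tilde{w}=1$. The correct lattice condition is $c_\circ-w\,c_\circ\in Y$, which is exactly the paper's condition (1); the image in $Y/Q$ then \emph{computes} $\iota(w)$ (this is the content of item (a), $\iota(w)=(1-w)h^{-1}\delta_\circ+Q$), rather than being a criterion for membership in $\underbar{\Omega}$. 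Your sentence ``$\Omega=\tilde{W}/\tilde{W}^\circ$ is identified with $Y/Q$ \ldots\ thus $\tilde{w}\in\Omega$ iff the translation defect lies in $Q$'' conflates the isomorphism $\underbar{\Omega}\cong Y/Q$ with a membership test and draws the opposite conclusion from the correct one.

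Your sketches for (i)$\Leftrightarrow$(ii) are closer to the mark and would, with the fix above, recover the equivalence of (1), (2), (3); but you would still owe the additional statements (the bijection with $\{\check{\beta}_i:n_i=1\}$ and the three descriptions of $\iota$), which are what the paper actually uses in Section~\ref{calculations}.
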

\begin{enumerate}
\item $\tilde{w}\in W\ltimes Y.$
\item $\tilde{w}\underbar{C}=\underbar{C}.$ 
\item $\tilde{w}(\check{E_{\circ}})=\check{E_{\circ}}$ where $\check{E_{\circ}}=\check{E}\cup\{\check{\beta_{0}}\}.$ 
\item $w(\check{E}\cup\{-\check{\tilde{\beta_{0}}}\})=\check{E}\cup\{-\check{\tilde{\beta_{0}}}\}$.

Then $\underbar{\ensuremath{\Omega}}=\Omega(\check{\underbar{\ensuremath{\Psi}}})$
is precisely the set of those $w\in W$ satisfying these conditions
and $w\mapsto\tilde{w}(\check{\beta_{0}})$ is a bijection between
$\underbar{\ensuremath{\Omega}}$ and $\{\check{\beta_{i}}\in\check{E_{\circ}}:n_{i}=1\}$.
There is an isomorphism $\iota:\underbar{\ensuremath{\Omega}}\rightarrow Y/Q$
defined by any of the following ways:

\item[(a)] $\iota(w)=(1-w)h^{-1}\delta+Q.$

\item[(b)] $\iota(w)$ is the image of $\tilde{w}$ under $W\ltimes Y\rightarrow(W\ltimes Y)/(W\ltimes Q)=Y/Q.$

\item[(c)] If $w\neq0$ and $\tilde{w}(\check{\beta_{0}})=\check{\beta_{i}}$,
$\iota(w)$ is the $i^{th}$ fundamental weight $\omega_{i}$ modulo
$Q$. For $w=1$, $\iota(w)=0$. 

\end{enumerate}

\subsection{\label{calculations}Calculations}

Let the notations be as in section \ref{Relative root system}. Further,
$\boldsymbol{G}$ is semisimple, almost simple, simply connected and
unramified. Let $K_{0}$ be a hyperspecial subgroup satisfying $\pi_{0}^{K_{0}}\neq0.$
Then $K_{0}$ determines the choice of an origin in the Bruhat-Tits
building of $G.$ For $K_{\omega}\in\omega\cdot[K_{0}],$ let $f_{\omega,\lambda}\in I(\lambda)^{K_{\omega}}$
be the spherical vector such that $f_{\omega,\lambda}(1)=1$. For
$w\in W,$ let $c_{\lambda}(\omega,w)$ be the coefficient of $f_{\omega,w\cdot\lambda}$
in the equation: $\mathscr{A}(w,\lambda)f_{\omega,\lambda}=c_{\lambda}(\omega,w)f_{\omega,w\cdot\lambda}$.
Then $c_{\lambda}(\omega,w)$ satisfies the co-cycle relation, 
\begin{equation}
c_{\lambda}(\omega,w_{1}w_{2})=c_{w_{2}\cdot\lambda}(\omega,w_{1})\cdot c_{\lambda}(\omega,w_{2}).\label{coeff co-cycle relation}
\end{equation}

Let $x_{\lambda}$ denote the point in $\underbar{C}$ associated
to $\lambda$. Define $\tilde{c}_{\lambda}(\omega,w)\in\mathbb{Q}/\mathbb{Z}$
by $c_{\lambda}(\omega,w)=e^{-2\pi i\tilde{c}_{\lambda}(\omega,w)}$.
Then $\tilde{c}(\omega,w)$ satisfies the cocycle relation:
\begin{equation}
\tilde{c}_{\lambda}(\omega,w_{1}w_{2})=\tilde{c}_{w_{2}\cdot\lambda}(\omega,w_{1})+\tilde{c}_{\lambda}(\omega,w_{2}).\label{eq:cocycle-1}
\end{equation}
Let $\mathcal{C}=\{x:x\mbox{ is a hyperspecial vertex in the closure of }\underbar{C}\}.$
Then the representatives of the conjugacy classes of hyperspecial
subgroups can be chosen to be $\{G_{x}:x\in\mathcal{C}\}.$ Also,
the action of $\hat{\underbar{\ensuremath{\Omega}}}$ on the conjugacy
classes of hyperspecial subgroups gives a bijection $\hat{\underbar{\ensuremath{\Omega}}}\rightarrow\mathcal{C}.$
We use this bijection to identify $\hat{\underbar{\ensuremath{\Omega}}}$
with $\mathcal{C}$ whenever there is no ambiguity. By lemma \ref{keys lemma},
we have, 
\begin{eqnarray*}
\mathscr{A}(s_{\alpha},\lambda)f_{\omega,\lambda} & = & \begin{cases}
\lambda(t_{\alpha})^{-1}f_{\omega,s_{\alpha}\lambda} & \alpha(\omega)=1\\
1 & otherwise.
\end{cases}
\end{eqnarray*}
 Therefore,

\[
c_{\lambda}(\omega,s_{\alpha})=\begin{cases}
e^{-2\pi i(x_{\lambda},\check{\alpha})} & \alpha(\omega)=1\\
1 & otherwise.
\end{cases}
\]

Here $(x_{\lambda},\check{\alpha})$ denotes $\check{\alpha}(x_{\lambda}).$
Then,
\begin{equation}
\tilde{c}_{\lambda}(\omega,s_{\alpha})=\begin{cases}
(x_{\lambda},\check{\alpha}) & \alpha(\omega)=1\\
0 & otherwise.
\end{cases}\label{eq: keys coeff}
\end{equation}

Using proposition \ref{prop- Omega relates R}, we identify $R_{\lambda}$
as a subgroup of $\underbar{\ensuremath{\Omega}}$. For $r\in R_{\lambda}$,
we calculate $\tilde{c}_{\lambda}(\omega,r)$ by using equations \ref{coeff co-cycle relation}
and \ref{eq: keys coeff}. Using lemma \ref{jkyu lemma}, we realize
$R_{\lambda}$ as a subgroup of $Y/Q$. Finally, we calculate $(\omega,r)$
and show that $\tilde{c}_{\lambda}(\omega,r)=(\omega,r)\mbox{ mod }\mathbb{Z}$,
implying $c_{\lambda}(\omega,r)=\rho_{\omega}(r)$. 

When $\boldsymbol{G}$ is split, $\underbar{\ensuremath{\Omega}}$
is non-trivial for the cases $A_{n},B_{n},C_{n},D_{n},E_{6}$ and
$E_{7}$. When $\boldsymbol{G}$ is unramified but not split, $\underbar{\ensuremath{\Omega}}$
is non-trivial only for the cases $^{2}A_{2n-1}(n\ge3)$ and $^{2}D_{n+1}(n\geq2)$
\cite[table-1, page 29]{MR2674853}. The method of calculation of
$c_{\lambda}(\omega,w)$ is the same in all cases. We illustrate the
cases below:

\subsubsection{Type $A_{n}$ }

Let $\{\epsilon_{1},\epsilon_{2},\ldots,\epsilon_{n+1}\}$ be the
standard orthonormal basis of $\mathbb{R}^{n+1}$. We realize $A_{n}$
as a root system in the vector space $\mathbb{E}$ where, 
\[
\mathbb{E}=\{\sum_{i=1}^{n+1}c_{i}\epsilon_{i}:c_{i}\in\mathbb{R},\sum c_{i}=0\}.
\]
Then $\Phi=\{\epsilon_{i}-\epsilon_{j}:i\neq j,0\leq i,j\leq n\}$
is the set of roots. Let $\alpha_{i}=\epsilon_{i}-\epsilon_{i+1}$
for $1\leq i\leq n$. The set $\Delta=\{\alpha_{1},\ldots,\alpha_{n}\}$
is a fundamental system in $\Phi$. Also, $\check{\Phi}=\Phi$ and
$\check{\Delta}=\Delta$. The highest root is $\beta=\alpha_{1}+\ldots+\alpha_{n}=\epsilon_{1}-\epsilon_{n+1}$.
The fundamental weights are $\{x_{1},\ldots,x_{n}\}$, where 
\[
x_{k}=\epsilon_{1}+\epsilon_{2}+\ldots\epsilon_{k}-\frac{k}{n+1}(\epsilon_{1}+\epsilon_{2}+\ldots+\epsilon_{n+1}).
\]
 Let $\tilde{w_{0}}$ be the generator of the $\Omega(\check{\Psi})$
and let $w_{0}$ be the vector part. Then $w_{0}\cdot\alpha_{i}=\alpha_{i+1}$
for $1\leq i\leq n-1$ and $w_{0}\cdot\alpha_{n}=-\beta$, by lemma
\ref{jkyu lemma}. Write $s_{i}$ for the reflection about the hyperplane
determined by the root $\alpha_{i}$. As a product of reflections,
$w_{0}=(12\ldots n+1)=s_{1}s_{2}\ldots s_{n}$.Write 
\begin{eqnarray*}
x_{\lambda} & = & \sum_{i=1}^{n}a_{i}x_{i}\\
 & = & (b_{1},b_{2},\ldots,b_{n+1})
\end{eqnarray*}
 for some $a_{i}\in\mathbb{Q}$ and where $b_{i}=\frac{1}{(n+1)}(-a_{1}-2a_{2}-\ldots-(i-1)a_{i-1}+(n+1-i)a_{i}+\ldots+(n+1-k)a_{k}+\ldots+na_{n}).$ 

If $r\in R_{\lambda}$, then $r=w_{0}^{d}$ for some integer $d$.
Then by the cocycle relation \ref{eq:cocycle-1}, we get 
\begin{equation}
\tilde{c}_{\lambda}(\omega,w_{0}^{d})=\tilde{c}_{\lambda}(\omega,w_{0})+\ldots+\tilde{c}_{w_{0}^{d-2}\lambda}(\omega,w_{0})+\tilde{c}_{w_{0}^{d-1}\lambda}(\omega,w_{0}).\label{eq:An co-cycle}
\end{equation}
Let $\omega_{i}$ be the $i^{th}$ fundamental co-weight, i.e., $\alpha_{j}(\omega_{i})=\delta_{ij}$.
The set of hyperspecial verticles in the closure of the fundamental
alcove is $\mathcal{C}=\{0,\omega_{i}\mbox{ for }1\leq i\leq n\}$.
$ $Using equations \ref{eq:cocycle-1} and \ref{eq: keys coeff},
we get,
\[
\tilde{c}_{\lambda}(\omega,w_{0})=b_{i}-b_{n+1}.
\]
 From this and the equation \ref{eq:An co-cycle}, we get

\begin{equation}
\tilde{c}_{\lambda}(\omega_{i},w_{0}^{d})=(b_{i}-b_{n+1})+(b_{i-1}-b_{n})+\ldots.\label{eq:coeff}
\end{equation}
 Since $\tilde{w_{0}}^{d}\cdot x_{\lambda}=x_{\lambda}$, it follows
that $a_{i}=a_{j}$ iff $i\equiv j$ mod $d$. From this and the last
equation, it follow that 
\begin{eqnarray*}
\tilde{c}_{\lambda}(\omega_{i},w_{0}^{d}) & = & (a_{0}+\ldots+a_{d-1})(n+1-i)\\
 & = & \frac{(n+1-i)d}{n+1}.
\end{eqnarray*}
 The last equality follows because $\sum_{i=0}^{n}a_{i}=1$. Now $\tilde{w}_{0}^{d}\cdot x_{0}=x_{d}$,
where $x_{0}$ is the origin in $V(\check{\Psi})$. Therefore, 
\[
(\omega_{i},r)=(\omega_{i},x_{d})=\frac{(n+1-i)d}{n+1}.
\]
 Thus, 
\[
\tilde{c}_{\lambda}(\omega_{i},r)=(\omega_{i},r)\mbox{ mod }\mathbb{Z}.
\]
This prooves the result for type $A_{n}$.

\subsubsection{Type $B_{n}$ }

The dual root sysyem of $B_{n}$ is $C_{n}$. Let $\{\epsilon_{1},\epsilon_{2},\ldots,\epsilon_{n}\}$
be the standard orthonormal basis of $\mathbb{R}^{n}$. Then $B_{n}$
root system is given by $\Phi=\{\pm\epsilon_{i}\pm\epsilon_{j}\}\cup\{\pm\epsilon_{i}\}$.
The co-roots are $\check{\Phi}=\{\pm\epsilon_{i}\pm\epsilon_{j}\}\cup\{\pm2\epsilon_{i}\}$.
Let $\alpha_{i}=\epsilon_{i}-\epsilon_{i+1}$ for $1\leq i\leq n-1$
and $\alpha_{n}=\epsilon_{n}$. The set $\Delta=\{\alpha_{1},\ldots,\alpha_{n}\}$
is a set of simple roots in $\Phi$. The corresponding simple co-roots
are $\check{\Delta}=\{\check{\alpha}_{1},\ldots,\check{\alpha}_{n}\}$
where $\check{\alpha_{i}}=\alpha_{i}$ for $1\leq i\leq n-1$ and
$\check{\alpha_{n}}=2\alpha_{n}$. The highest co-root is $\check{\beta}=2(\check{\alpha}_{1}+\dots+\check{\alpha}_{n-1})+\check{\alpha}_{n}$.
The fundamental weights are $\{x_{1},\ldots,x_{n}\}$ where,

\[
x_{k}=\epsilon_{1}+\ldots+\epsilon_{k}\mbox{ for \ensuremath{1\leq k\leq n-1}},
\]
\[
x_{n}=\frac{1}{2}(\epsilon_{1}+\ldots+\epsilon_{n-1}+\epsilon_{n}).
\]
Let $\tilde{w_{0}}$ be the generator of the $\Omega(\check{\Psi})$
and let $w_{0}$ be the vector part. Then $w_{0}\cdot\check{\alpha}_{i}=\check{\alpha}_{n-i}$
for $1\leq i\leq n-1$ and $w_{0}\cdot\check{\alpha}_{n}=-\check{\beta}$.
Thus for $y=(y_{1},\ldots,y_{n})\in\mathbb{R}^{n}$, $w_{0}\cdot y=(-y_{n},-y_{n-1},\ldots,-y_{1})$.
As a product of reflections, \linebreak{}
$w_{0}=\dots qs_{n-2}s_{n-1}qs_{n-1}qq$, where $q=s_{n}\dots s_{1}$.
Write 
\begin{eqnarray*}
x_{\lambda} & = & \sum_{i=1}^{n}a_{i}x_{i}\\
 & = & (b_{1},b_{2},\ldots,b_{n}),
\end{eqnarray*}
 for some $a_{i}\in\mathbb{Q}$ and where $b_{i}=a_{i}+\ldots+a_{n-1}+\frac{1}{2}a_{n}$.
Let $\omega_{1}$ be the first fundamental co-weight. Then $\omega_{1}=\epsilon_{1}$
and $\mathcal{C}=\{0,\omega_{1}\}$. Using equations \ref{eq:cocycle-1}
and \ref{eq: keys coeff} to calculate $\tilde{c}(\omega_{1},w_{0})$
we get,
\[
\tilde{c}(\omega_{1},w_{0})=(h,\check{\alpha}_{1})
\]
 where $h=x_{\lambda}+q\cdot x_{\lambda}+s_{n-1}qq\cdot x_{\lambda}+\ldots$.
Thus 
\begin{eqnarray}
\tilde{c}(\omega_{1},w_{0}) & = & (h,\epsilon_{1}-\epsilon_{2})\nonumber \\
 & = & b_{1}+b_{n}\nonumber \\
 & = & a_{1}+\ldots+a_{n-1}+\frac{1}{2}a_{n}+\frac{1}{2}a_{n}\nonumber \\
 & = & a_{1}+\ldots+a_{n}.\label{coff B_n}
\end{eqnarray}
 Since $\tilde{w_{0}}\cdot x_{\lambda}=x_{\lambda}$, it follows that
$a_{i}=a_{n-i}$ for $1\leq i\leq n-1$ and $a_{0}=a_{n}$. From this
and the relation
\[
a_{0}+2(a_{1}+\ldots+a_{n-1})+a_{n}=1,
\]
 it follows that $\tilde{c}(\omega_{1},w_{0})=\frac{1}{2}$. Also
$(\omega_{1},x_{n})=\frac{1}{2}$. This completes the case of $B_{n}$.

\subsubsection{Type $C_{n}$ }

Then $C_{n}$ root system is given by $\Phi=\{\pm\epsilon_{i}\pm\epsilon_{j}\}\cup\{\pm2\epsilon_{i}\}$.
Let $\alpha_{i}=\epsilon_{i}-\epsilon_{i+1}$ for $1\leq i\leq n-1$
and $\alpha_{n}=2\epsilon_{n}$. The set $\Delta=\{\alpha_{1},\ldots,\alpha_{n}\}$
is a set of simple roots in $\Phi$. The corresponding simple co-roots
are $\check{\Delta}=\{\check{\alpha}_{1},\ldots,\check{\alpha}_{n}\}$,
where $\check{\alpha}_{i}=\alpha_{i}$ for $1\leq i\leq n-1$ and
$\check{\alpha}_{n}=\frac{1}{2}\alpha_{n}$. The highest co-root is
$\check{\beta}=\check{\alpha}_{1}+2(\check{\alpha}_{2}+\dots+\check{\alpha}_{n})=\epsilon_{1}+\epsilon_{2}$.
The fundamental weigts are $\{x_{1},\ldots,x_{n}\}$ where, 
\[
x_{k}=\epsilon_{1}+\ldots+\epsilon_{k}\mbox{ for \ensuremath{1\leq k\leq n}}.
\]

Let $\tilde{w_{0}}$ be the generator of the $\Omega(\check{\Psi})$
and let $w_{0}$ be the vector part. Then $w_{0}\cdot\check{\alpha}_{i}=\check{\alpha}_{i}$
for $2\leq i\leq n$ and $w_{0}\cdot\check{\alpha}_{1}=-\check{\beta}$.
Thus for $y=(y_{1},\ldots,y_{n})\in\mathbb{R}^{n}$, $w_{0}\cdot y=(-y_{1,}y_{2},\ldots,y_{n})$.
As a product of reflections, $w_{0}=s_{1}\ldots s_{n-1}s_{n}s_{n-1}\ldots s_{1}$.
Write
\begin{eqnarray*}
x_{\lambda} & = & \sum_{i=1}^{n}a_{i}x_{i}\\
 & = & (b_{1},b_{2},\ldots,b_{n}),
\end{eqnarray*}
 for some $a_{i}\in\mathbb{Q}$ and where $b_{i}=a_{i}+\ldots+a_{n-1}+a_{n}$.
Let $\omega_{n}$ be the $n^{th}$ fundamental co-weight. Then $\omega_{n}=\frac{1}{2}(\epsilon_{1}+\ldots+\epsilon_{n-1}+\epsilon_{n})$
and $\mathcal{C}=\{0,\omega_{n}\}$. Using equations \ref{eq:cocycle-1}
and \ref{eq: keys coeff} to calculate $\tilde{c}(\omega_{n},w_{0})$,
we get,
\begin{eqnarray*}
\tilde{c}(\omega_{n},w_{0}) & = & (s_{n-1}\ldots s_{1}\cdot x_{\lambda},\check{\alpha}_{n})\\
 & = & a_{1}+\ldots+a_{n}.
\end{eqnarray*}
 Since $\tilde{w_{0}}\cdot x_{\lambda}=x_{\lambda}$, it follows that
$a_{0}=a_{1}$. From this and the relation
\[
a_{0}+a_{1}+2(a_{2}+\ldots+a_{n})=1,
\]
 it follow that $\tilde{c}(\omega_{n},w_{0})=\frac{1}{2}$. Also $(\omega_{n},x_{1})=\frac{1}{2}$.
This completes the case of $C_{n}$.

\subsubsection{Type $D_{n}$}
\begin{description}
\item [{Case}] $n$ is odd 
\end{description}
If $\{\epsilon_{1},\epsilon_{2},\ldots,\epsilon_{n}\}$ is the standard
orthonormal basis of $\mathbb{R}^{n}$, then the root system of $D_{n}$
is given by $\Phi=\{\pm\epsilon_{i}\pm\epsilon_{j}\}$. Let $\alpha_{i}=\epsilon_{i}-\epsilon_{i+1}$
for $1\leq i\leq n-1$ and $\alpha_{n}=\epsilon_{n-1}+\epsilon_{n}$.
The set $\Delta=\{\alpha_{1},\ldots,\alpha_{n}\}$ is a set of simple
roots in $\Phi$. We have $\check{\Phi}=\Phi$ and $\check{\Delta}=\Delta$.
The highest root is $\beta=\alpha_{1}+2(\alpha_{2}+\ldots+\alpha_{n-2})+\alpha_{n-1}+\alpha_{n}=\epsilon_{1}+\epsilon_{2}$.
The fundamental weights are
\[
x_{k}=\epsilon_{1}+\ldots+\epsilon_{k}\mbox{ for \ensuremath{1\leq k\leq n-2}},
\]
\[
x_{n-1}=\frac{1}{2}(\epsilon_{1}+\ldots+\epsilon_{n-1}-\epsilon_{n}),
\]
\[
x_{n}=\frac{1}{2}(\epsilon_{1}+\ldots+\epsilon_{n-1}+\epsilon_{n}).
\]
 Let $\tilde{w_{0}}$ be the generator of the $\Omega(\check{\Psi})$
and let $w_{0}$ be the vector part. Then $w_{0}\cdot\alpha_{i}=\alpha_{n-i}$
for $2\leq i\leq n-2$, $w_{0}\cdot\alpha_{1}=\alpha_{n-1}$, $w_{0}\cdot\alpha_{n-1}=-\beta$,
$w_{0}\cdot(-\beta)=\alpha_{n}$ and $w_{0}\cdot\alpha_{n}=\alpha_{1}$.
Thus for $y=(y_{1},\ldots,y_{n})\in\mathbb{R}^{n}$, $w_{0}\cdot y=(y_{n},-y_{n-1},-y_{n-2},\ldots,-y_{1})$.
As a product of reflections, \linebreak{}
$w_{0}=\ldots s_{4}\ldots s_{n-2}s_{n}s_{3}\ldots s_{n-1}s_{2}\ldots s_{n-2}s_{n}s_{1}\ldots s_{n-1}$.
Write
\begin{eqnarray*}
x_{\lambda} & = & \sum_{i=1}^{n}a_{i}x_{i}\\
 & = & (b_{1},b_{2},\ldots,b_{n})
\end{eqnarray*}
 for some $a_{i}\in\mathbb{Q}$ and where $b_{i}=a_{i}+\ldots+a_{n-2}+\frac{a_{n-1}+a_{n}}{2}$
for $1\leq i\leq n-1$ and $b_{n}=\frac{-a_{n-1}+a_{n}}{2}$. Let
$\omega_{i}$ be the $i^{th}$ fundamental co-weight. Then $\mathcal{C}=\{0,\omega_{1},\omega_{n-1},\omega_{n}\}$.
The first fundamental co-weight $\omega_{1}=x_{1}$. Using equations
\ref{eq:cocycle-1} and \ref{eq: keys coeff} to calculate $\tilde{c}(\omega_{1},w_{0})$,
we get,
\begin{eqnarray*}
\tilde{c}(\omega_{1},w_{0}) & = & (s_{2}\ldots s_{n-1}\cdot x_{\lambda},\check{\alpha}_{1})\\
 & = & b_{1}-b_{n}\\
 & = & a_{1}+\ldots+a_{n-1}.
\end{eqnarray*}
 Since $\tilde{w_{0}}\cdot x_{\lambda}=x_{\lambda}$, it follows that
$a_{i}=a_{n-i}$ for $2\leq i\leq n-2$ and $a_{0}=a_{1}=a_{n-1}=a_{n}$.
From this and the relation
\begin{equation}
a_{0}+a_{1}+2(a_{2}+\ldots+a_{n-2})+a_{n-1}+a_{n}=1\label{a_i rel D_n odd}
\end{equation}
 it follow that $\tilde{c}(\omega_{1},w_{0})=\frac{1}{2}$. Also $(\omega_{1},x_{n})=\frac{1}{2}$. 

To compute $\tilde{c}(\omega_{1},w_{0}^{2})$ and $\tilde{c}(\omega_{1},w_{0}^{3})$
we can use the co-cycle relation \ref{eq:cocycle-1}. 

For $r=w_{0}^{2}\in R_{\lambda}$ we get

\begin{eqnarray*}
\tilde{c_{\lambda}}(\omega_{1},w_{0}^{2}) & = & \tilde{c}_{\lambda}(\omega_{1},w_{0})+\tilde{c}_{w_{0}\cdot\lambda}(\omega_{1},w_{0})\\
 & = & b_{1}-b_{n}+b_{n}+b_{1}\\
 & = & 2b_{1}.
\end{eqnarray*}
 Now if $\tilde{w_{0}}^{2}\cdot x_{\lambda}=x_{\lambda}$, then it
follows $a_{0}=a_{1}$ and $a_{n-1}=a_{n}$. From this and the relation
\ref{a_i rel D_n odd} we get $b_{1}=\frac{1}{2}$ and therefore $\tilde{c_{\lambda}}(\omega_{1},w_{0}^{2})=1$.
Also $(\omega_{1},x_{1})=1$. 

For $r=w_{0}^{3}\in R_{\lambda}$ we get,

\begin{eqnarray*}
\tilde{c_{\lambda}}(\omega_{1},w_{0}^{3}) & = & \tilde{c}_{\lambda}(\omega_{1},w_{0}^{2})+\tilde{c}_{w_{0}^{2}\cdot\lambda}(\omega_{1},w_{0})\\
 & = & 2b_{1}+b_{n}-b_{1}\\
 & = & b_{n}+b_{1}.
\end{eqnarray*}
 Now if $\tilde{w_{0}}^{3}\cdot x_{\lambda}=x_{\lambda}$, then it
follows $a_{0}=a_{n-1}=a_{n-1}=a_{n}$ and $a_{i}=a_{n-i}$ for $2\leq i\leq n-2.$
From this and the relation \ref{a_i rel D_n odd} we get $b_{1}+b_{n}=\frac{1}{2}$
and therefore $\tilde{c_{\lambda}}(\omega_{1},w_{0}^{3})=\frac{1}{2}$.
Also $(\omega_{n-1},x_{1})=\frac{1}{2}$. 

The calculation for other hyperspecial vertices follows by the same
method. This completes the case of $D_{n}$ when $n$ is odd. 
\begin{description}
\item [{Case}] $n$ is even
\end{description}
In this case $\Omega(\check{\Psi})$ is not cyclic. The non-trivial
automorphisms of the extended Dynkin diagram are:
\begin{itemize}
\item $w_{1}:\{-\beta\}\cup\Delta\rightarrow\{-\beta\}\cup\Delta$
\begin{eqnarray*}
-\beta & \longleftrightarrow & \alpha_{n},\\
\alpha_{1} & \longleftrightarrow & \alpha_{n-1},\\
\alpha_{i} & \longleftrightarrow & \alpha_{n-i}.
\end{eqnarray*}
 
\item $w_{2}:\{-\beta\}\cup\Delta\rightarrow\{-\beta\}\cup\Delta$ 
\begin{eqnarray*}
-\beta & \longleftrightarrow & \alpha_{1},\\
\alpha_{n-1} & \longleftrightarrow & \alpha_{n},\\
\alpha_{i} & \longleftrightarrow & \alpha_{i}.
\end{eqnarray*}
 
\item $w_{3}:\{-\beta\}\cup\Delta\rightarrow\{-\beta\}\cup\Delta$ 
\begin{eqnarray*}
-\beta & \longleftrightarrow & \alpha_{n-1},\\
\alpha_{1} & \longleftrightarrow & \alpha_{n},\\
\alpha_{i} & \longleftrightarrow & \alpha_{n-i}.
\end{eqnarray*}
 For the fundamental weight $\omega_{1}$, we calculate $\tilde{c_{\lambda}}(\omega_{1},w)$
for each $w\in\{w_{1},w_{2},w_{3}\}$ and verify our result. 
\end{itemize}
$w_{1}:\mathbb{R}^{n}\rightarrow\mathbb{R}^{n}$ is the map $(y_{1},\ldots,y_{n})\longmapsto(-y_{n},-y_{n-1},\ldots,-y_{1})$.
As a product of reflections $w_{1}=\cdots s_{4}\cdots s_{n-1}s_{3}\cdots s_{n-2}s_{n}s_{2}\cdots s_{n-1}\cdots s_{1}\cdots s_{n-2}s_{n}$.
Using equations \ref{eq:cocycle-1} and \ref{eq: keys coeff} to calculate
$\tilde{c}(\omega_{1},w_{1})$ we get, 
\begin{eqnarray*}
\tilde{c}(\omega_{1},w_{1}) & = & (s_{2}\ldots s_{n-2}s_{n}\cdot x_{\lambda},\check{\alpha}_{1})\\
 & = & ((b_{1},-b_{n},b_{2},\ldots,b_{n-2},-b_{n-1}),\epsilon_{1}-\epsilon_{2})\\
 & = & b_{1}+b_{n}\\
 & = & a_{1}+\cdots+a_{n-1}.
\end{eqnarray*}
 Since $w_{1}\cdot x_{\lambda}=x_{\lambda}$, it follows $a_{0}=a_{n}$
and $a_{1}=a_{n-1}$. From this and the relation \ref{a_i rel D_n odd},
we get that $\tilde{c}(\omega_{1},w_{1})=\frac{1}{2}$. Also, $(w_{1},x_{n})=\frac{1}{2}$. 

$w_{2}:\mathbb{R}^{n}\rightarrow\mathbb{R}^{n}$ is the map $(y_{1},\ldots,y_{n})\longmapsto(-y_{1},y_{2},\ldots,y_{n-1},-y_{n})$.
As a product of reflections $w_{2}=s_{1}\cdots s_{n-2}s_{n}s_{n-1}\cdots s_{1}$.
Using equations \ref{eq:cocycle-1} and \ref{eq: keys coeff} to calculate
$\tilde{c}(\omega_{1},w_{2})$ we get, 
\begin{eqnarray*}
\tilde{c}(\omega_{1},w_{2}) & = & (x_{\lambda}+s_{2}\cdots s_{n-2}s_{n}s_{n-1}\cdots s_{1}\cdot x_{\lambda},\check{\alpha}_{1})\\
 & = & ((b_{1},\ldots,b_{n})+(b_{2},-b_{1},b_{3},\ldots,b_{n-1},-b_{n}),\epsilon_{1}-\epsilon_{2})\\
 & = & 2b_{1}.
\end{eqnarray*}
 $w_{2}\cdot x_{\lambda}=x_{\lambda}$ implies $a_{0}=a_{1}$ and
$a_{n-1}=a_{n}$. From this and the relation \ref{a_i rel D_n odd},
we get that $\tilde{c}(\omega_{1},w_{2})=1$. Also, $(w_{1},x_{1})=1$. 

$w_{3}:\mathbb{R}^{n}\rightarrow\mathbb{R}^{n}$ is the map $(y_{1},\ldots,y_{n})\longmapsto(y_{n},-y_{n-1},\ldots,y_{1})$.
As a product of reflections $w_{3}=s_{n-1}\cdots s_{1}s_{n}\cdots s_{n}s_{4}\cdots s_{n-1}s_{3}\cdots s_{n-2}s_{n}s_{2}\cdots s_{n-1}$.
Using equations \ref{eq:cocycle-1} and \ref{eq: keys coeff} to calculate
$\tilde{c}(\omega_{1},w_{3})$ we get, 
\begin{eqnarray*}
\tilde{c}(\omega_{1},w_{3}) & = & (s_{n}\cdots s_{n}s_{4}\cdots s_{n-1}s_{3}\cdots s_{n-2}s_{n}s_{2}\cdots s_{n-1}\cdot x_{\lambda},\check{\alpha}_{1})\\
 & = & ((b_{1},b_{n},-b_{n-1},-b_{n-2},\ldots,-b_{2}),\epsilon_{1}-\epsilon_{2})\\
 & = & b_{1}-b_{n}\\
 & = & a_{1}+\cdots+a_{n-1}.
\end{eqnarray*}
 Since $w_{3}\cdot x_{\lambda}=x_{\lambda}$ implies $a_{0}=a_{n-1}$
and $a_{1}=a_{n}$. From this and the relation \ref{a_i rel D_n odd},
we get that $\tilde{c}(\omega_{1},w_{3})=\frac{1}{2}$. Also, $(w_{1},x_{n-1})=\frac{1}{2}$. 

The calculation for other hyperspecial vertices follows by the same
method. This completes the case of $D_{n}$ when $n$ is even.

\subsubsection{Type $E_{6}$ }

$E_{6}$ can be realized as a subspace of $\mathbb{R}^{8}$. A set
of simple roots would then be $\Delta=\{\alpha_{1},\ldots,\alpha_{6}\}$
where $\alpha_{1}=(-\frac{1}{2},\frac{1}{2},\frac{1}{2},\frac{1}{2},\frac{1}{2},\frac{1}{2},\frac{1}{2},-\frac{1}{2})$,
$\alpha_{2}=(1,1,0,0,0,0,0,0)$, $\alpha_{3}=(-\frac{1}{2},\frac{1}{2},-\frac{1}{2},-\frac{1}{2},-\frac{1}{2},-\frac{1}{2},-\frac{1}{2},\frac{1}{2})$,
$\alpha_{4}=(0,-1,1,0,0,0,0,0)$, \linebreak{}
$\alpha_{5}=(0,0,-1,1,0,0,0,0)$, $\alpha_{6}=(0,0,0,-1,1,0,0,0)$.
The fundamental weights are,
\begin{eqnarray*}
x_{1} & = & (0,0,0,0,0,-\frac{2}{3},-\frac{2}{3},\frac{2}{3})\\
x_{2} & = & (\frac{1}{2},\frac{1}{2},\frac{1}{2},\frac{1}{2},\frac{1}{2},-\frac{1}{2},-\frac{1}{2},\frac{1}{2})\\
x_{3} & = & (-\frac{1}{2},\frac{1}{2},\frac{1}{2},\frac{1}{2},\frac{1}{2},-\frac{5}{6},-\frac{5}{6},\frac{5}{6})\\
x_{4} & = & (0,0,1,1,1,-1,-1,1)\\
x_{5} & = & (0,0,0,1,1,-\frac{2}{3},-\frac{2}{3},\frac{2}{3})\\
x_{6} & = & (0,0,0,0,1,-\frac{1}{3},-\frac{1}{3},\frac{1}{3}).
\end{eqnarray*}
 Let $\beta$ denote the highest root. Let $\tilde{w_{0}}$ be the
generator of the $\Omega(\check{\Psi})$ and let $w_{0}$ be the vector
part. Then under the action of $w_{0}$ we have $-\beta\mapsto\alpha_{1}\mapsto\alpha_{6}\mapsto-\beta$
and $\alpha_{2}\mapsto\alpha_{3}\mapsto\alpha_{5}\mapsto\alpha_{2}$
. As a product of reflections, $w_{0}=s_{1}s_{3}s_{4}s_{5}s_{6}s_{2}s_{4}s_{5}s_{3}s_{4}s_{1}s_{3}s_{2}s_{4}s_{5}s_{6}$.
Write
\begin{eqnarray*}
x_{\lambda} & = & \sum_{i=1}^{6}a_{i}x_{i}\\
 & = & (b_{1},b_{2},\ldots,b_{8}),
\end{eqnarray*}
 for some $a_{i}\in\mathbb{Q}$ and where $b_{i}$ denote the $i^{th}$
co-ordinate in the standard basis. Let $\omega_{i}$ be the $i^{th}$
fundamental co-weight. Then $\mathcal{C}=\{0,\omega_{1},\omega_{6}\}$.
Using equations \ref{eq:cocycle-1} and \ref{eq: keys coeff} to calculate
$\tilde{c}(\omega_{6},w_{0})$ we get, 
\begin{eqnarray*}
\tilde{c}(\omega_{6},w_{0}) & = & (x_{\lambda}+s_{2}s_{4}s_{5}s_{3}s_{4}s_{1}s_{3}s_{2}s_{4}s_{5}s_{6}\cdot x_{\lambda},\check{\alpha}_{6})\\
 & = & 2b_{5}\\
 & = & a_{2}+a_{3}+2(a_{4}+a_{5}+a_{6}).
\end{eqnarray*}
 From the relation 
\begin{equation}
a_{0}+a_{1}+a_{6}+2(a_{2}+a_{3}+a_{5})+3a_{4}=1
\end{equation}
 and the relations 
\begin{equation}
a_{0}=a_{1}=a_{6},
\end{equation}

\begin{equation}
a_{2}=a_{3}=a_{5},
\end{equation}
 we get that $\tilde{c}(\omega_{6},w_{0})=\frac{2}{3}$. Also $(\omega_{6},x_{1})=\frac{2}{3}$. 

The calculation for the other hyperspecial vertex is very similar.

\subsubsection{Type $E_{7}$}

$E_{7}$ can be realized as a subspace of $\mathbb{R}^{8}$. A set
of simple roots would then be $\Delta=\{\alpha_{1},\ldots,\alpha_{6}\}$
where $\alpha_{1}=(\frac{1}{2},\frac{-1}{2},\frac{-1}{2},-\frac{1}{2},-\frac{1}{2},-\frac{1}{2},-\frac{1}{2},\frac{1}{2})$,
$\alpha_{2}=(1,1,0,0,0,0,0,0)$, $\alpha_{3}=(-1,1,0,0,0,0,0,0)$,
$\alpha_{4}=(0,-1,1,0,0,0,0,0)$, $\alpha_{5}=(0,0,-1,1,0,0,0,0)$,
$\alpha_{6}=(0,0,0,-1,1,0,0,0)$ and $\alpha_{7}=(0,0,0,0,-1,1,0,0)$.
The fundamental weights are,
\begin{eqnarray*}
x_{1} & = & (0,0,0,0,0,0,-1,1),\\
x_{2} & = & (\frac{1}{2},\frac{1}{2},\frac{1}{2},\frac{1}{2},\frac{1}{2},\frac{1}{2},-1,1),\\
x_{3} & = & (-\frac{1}{2},\frac{1}{2},\frac{1}{2},\frac{1}{2},\frac{1}{2},\frac{1}{2},-\frac{3}{2},\frac{3}{2}),\\
x_{4} & = & (0,0,1,1,1,1,-2,2),\\
x_{5} & = & (0,0,0,1,1,1,-\frac{3}{2},\frac{3}{2}),\\
x_{6} & = & (0,0,0,0,1,1,-1,1),\\
x_{7} & = & (0,0,0,0,0,1,-\frac{1}{2},\frac{1}{2}).
\end{eqnarray*}
 Let $\beta$ denote the highest root. Let $\tilde{w_{0}}$ be the
generator of the $\Omega(\check{\Psi})$ and let $w_{0}$ be the vector
part. Then $w_{0}$ is of order two and it sends $-\beta\mapsto\alpha_{7}$,
$\alpha_{1}\mapsto\alpha_{6}$, $\alpha_{3}\mapsto\alpha_{5}$ and
it fixes $\alpha_{2}$ and $\alpha_{4}$. As a product of reflections\linebreak{}
 $w_{0}=s_{7}s_{6}s_{5}s_{4}s_{3}s_{2}s_{1}s_{4}s_{3}s_{5}s_{4}s_{2}s_{6}s_{5}s_{4}s_{3}s_{1}s_{7}s_{6}s_{5}s_{4}s_{2}s_{3}s_{4}s_{5}s_{6}s_{7}$.
Write 
\begin{eqnarray*}
x_{\lambda} & = & \sum_{i=1}^{6}a_{i}x_{i}\\
 & = & (b_{1},b_{2},\ldots,b_{8}),
\end{eqnarray*}
 for some $a_{i}\in\mathbb{Q}$ and where $b_{i}$ denote the $i^{th}$
co-ordinate in the standard basis. Let $\omega_{i}$ be the $i^{th}$
fundamental co-weight. Then $\mathcal{C}=\{0,\omega_{7}\}$. Using
equations \ref{eq:cocycle-1} and \ref{eq: keys coeff} to calculate
$\tilde{c}(\omega_{7},w_{0})$ we get, 
\begin{eqnarray*}
\tilde{c}(\omega_{7},w_{0}) & = & (x_{\lambda}+s_{6}s_{5}s_{4}s_{2}s_{3}s_{4}s_{5}s_{6}s_{7}\cdot x_{\lambda}+\\
 &  & s_{6}s_{5}s_{4}s_{3}s_{2}s_{1}s_{4}s_{3}s_{5}s_{4}s_{2}s_{6}s_{5}s_{4}s_{3}s_{1}s_{7}s_{6}s_{5}s_{4}s_{2}s_{3}s_{4}s_{5}s_{6}s_{7}\cdot x_{\lambda},\check{\alpha}_{7})\\
 & = & 2b_{6}-b_{7}+b_{8}.
\end{eqnarray*}
From the relations 
\begin{equation}
a_{0}+2a_{1}+2a_{2}+3a_{3}+4a_{4}+3a_{5}+2a_{6}+a_{7}=1,
\end{equation}
\begin{equation}
a_{0}=a_{7}\mbox{; }a_{1}=a_{6}\mbox{; }a_{3}=a_{5},
\end{equation}
 we get that $\tilde{c}(\omega_{7},w_{0})=\frac{3}{2}$. Also $(\omega_{7},x_{7})=\frac{3}{2}$.
This completes the case of $E_{7}$.

\subsubsection{Types $^{2}A_{2n-1}(n\geq3)$ and $^{2}D_{n+1}(n\geq2)$ }

The relative root system for $^{2}A_{2n-1}(n\geq3)$ is of type $B_{n}$
and that $^{2}D_{n+1}(n\geq2)$ is of type $C_{n}$. So the calculations
in these cases are the same as in the unramified case. 

This completes the proof of the Proposition \ref{intertwining coeff}
and \ref{intertwining coeff-1}.

\section*{Acknoledgement}

I am indepted to my advisor Jiu-Kang Yu without whose guidance this
work would not be possible. I would like to express my gratitude to
Freydoon Shahidi for many helpful suggestions, particularly for section
\ref{non unitary case}. I am also grateful to David Goldberg for
his careful proof reading and many helpul suggestions. Finally, I
am very thankful to Sandeep Varma for his help and support at various
points of the development of this paper.

\bibliographystyle{alpha}
\bibliography{refrences1}

\end{document}